\documentclass[10pt]{amsart}
\usepackage{custom_anna}
\addbibresource{Bibliography_full.bib}
\hypersetup{
 pdfauthor = {Anna Fokma},
 pdftitle= {Thurston's jiggling},
 pdfsubject = {...}}

\newcommand{\aspan}{\operatorname{ASpan}}
\newcommand{\lspan}{\operatorname{LSpan}}
\newcommand{\bary}{\mathrm{bar}}
\newcommand{\LinMaps}{\operatorname{Lin}}
\newcommand{\Bop}[1]{B_{\mathrm{op}}^{#1}}
\newcommand{\dop}[1]{d_{\mathrm{op}}^{#1}}
\newcommand{\dproj}{d_\mathrm{proj}}
\newcommand{\Bproj}{B_\mathrm{proj}}
\newcommand{\maxcoeff}{{\Lambda}}

\pdfoptionpdfminorversion=7 
\pdfsuppresswarningpagegroup=1 

\begin{document}

\title{Thurston's jiggling}

\subjclass[2020]{Primary: 57R05. 
	Secondary: 57Q65. 
} 
\date{\today}

\keywords{jiggling, triangulations, piecewise smooth}

\author{Anna Fokma}
\address{Utrecht University, Department of Mathematics, Budapestlaan 6, 3584~CD Utrecht, The Netherlands}
\email{annaf.math@outlook.com}

\begin{abstract}
In the 1970s Thurston introduced a technique known as ``jiggling'' which brings any triangulation into general position (a stronger version of transversality) by subdividing and perturbing. This result is now known as Thurston's jiggling lemma. In this paper we provide an alternative, more conceptual proof of the lemma. In particular we also prove the generalization to manifolds, whose proof had previously only been sketched.
\end{abstract}
\maketitle

\tableofcontents

\section{Introduction}\label{chap:JigglingIntro}

A crucial step in Thurston's work~\cite{Th1,Th2} on determining the homotopy type of the space of foliations, is to bring triangulations in general position with respect to a distribution. For readers unfamiliar with this notion, it can be thought of as requiring not only that all top-dimensional simplices of the triangulation are transverse to the distribution, but all their lower-dimensional faces as well. This implies in particular that the distribution varies only little over each simplex. To achieve this, Thurston established what is now known as the ``jiggling lemma'':

\begin{lemma*}[Thurston's jiggling lemma \cite{Th2}]
	Consider a manifold $M$ endowed with a distribution $\xi$. Any smooth triangulation of $M$ can be subdivided and subsequently perturbed to be in general position with respect to $\xi$, over any given compact subset. Moreover, the perturbation can be assumed to be $C^1$-small. 
\end{lemma*}

Thurston considered the jiggling lemma to be intuitive enough to be accepted without a formal proof. However, he did provide one for those interested, and upon inspection, it becomes clear that the proof is quite subtle. Recently a variation of Thurston's jiggling lemma has been established in \cite{BD24}, where the triangulation is brought into general position with respect to a symplectic form. More generally, the lemma has had numerous applications in the study of geometric structures~\cite{LM,colin1999stabilite,Vo16,CPPP,PV}.

In the present paper we revisit Thurston's jiggling lemma and try to develop a more conceptual alternative to Thurston's proof, which is at times quite technical. Moreover, Thurston proves his jiggling lemma in the case where $M$ is Euclidean in detail, but only sketches the argument for general manifolds $M$. Here we also provide a full proof for general $M$, for which we develop the notion of relative jiggling.

\subsection{Thurston's jiggling} \label{sec:IntroThurston}

Before we start with our approach to jiggling, we first briefly recall Thurston's approach. A detailed account of his approach is given in \cite{ben97}. Thurston's argument for his jiggling lemma depends on two main ingredients: subdividing and perturbing. We have illustrated the proof in \cref{fig:introJiggling}.

\begin{figure}[h]
	\centering
	\begin{subfigure}[b]{0.3\textwidth}
		\centering
		\includegraphics[width=\textwidth,page=1,trim =0 3cm 0 3cm,clip]{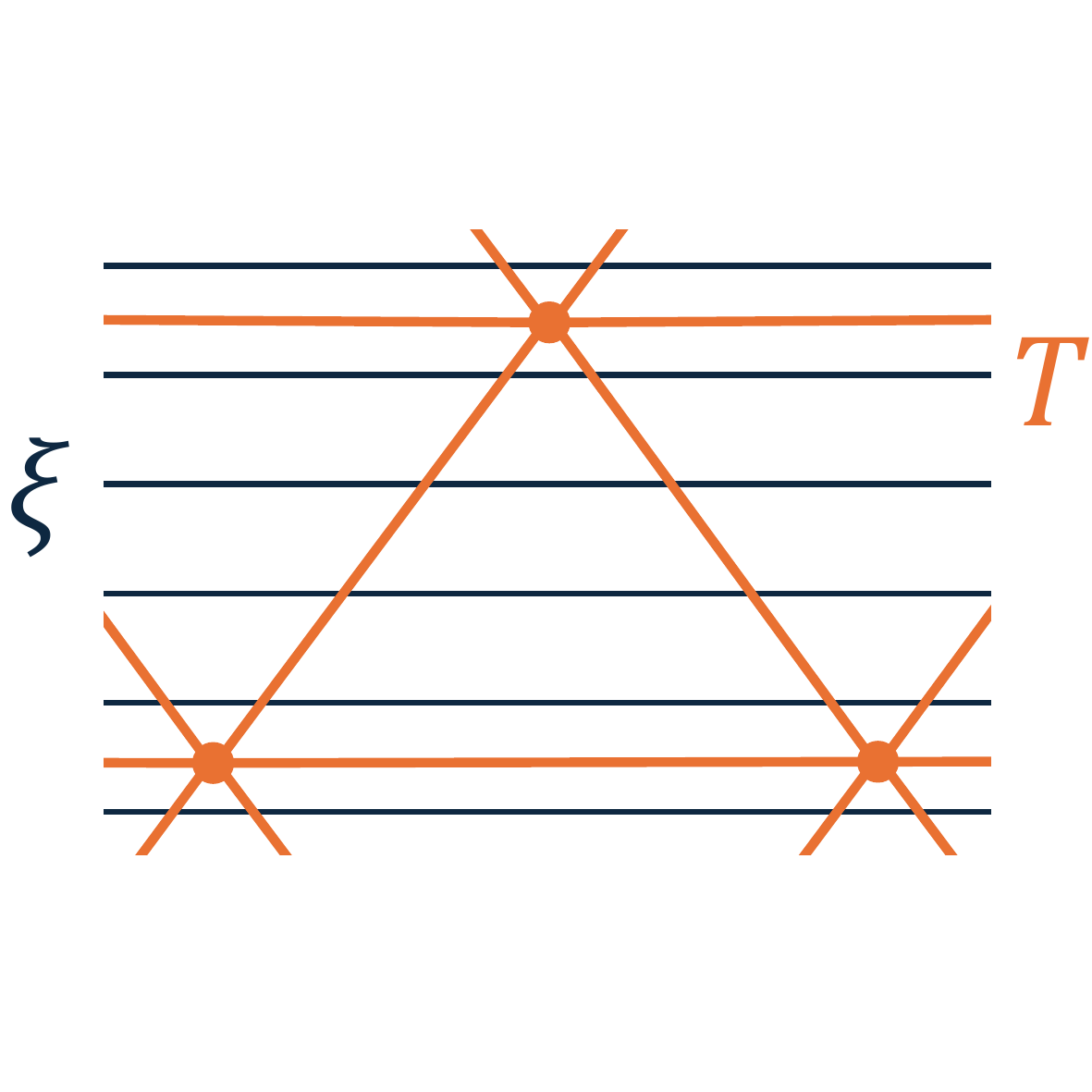}
	\end{subfigure}
	\raisebox{1cm}{\begin{tikzpicture}
			\draw[thick,->] (0,0) -- ++ (0.03\textwidth,0);	
	\end{tikzpicture}}
	\begin{subfigure}[b]{0.3\textwidth}
		\centering
		\includegraphics[width=\textwidth,page=2,trim =0 3cm 0 3cm,clip]{Fig_introjiggling}
	\end{subfigure}
	\raisebox{1cm}{\begin{tikzpicture}
			\draw[thick,->] (0,0) -- ++ (0.03\textwidth,0);
	\end{tikzpicture}}
	\begin{subfigure}[b]{0.3\textwidth}
		\centering
		\includegraphics[width=\textwidth,page=3,trim =0 3cm 0 3cm,clip]{Fig_introjiggling}
	\end{subfigure}
	\centering
	\caption{On the left we are given the horizontal distribution $\xi$ on $\R^2$ and a triangulation $T$ of $\R^2$. The triangulation $T$ is not transverse to the distribution, and hence we jiggle $T$. We do so in two steps: we first subdivide the triangulation (middle figure), and then we perturb the image of $T$ on the vertices (right figure). The result is a transverse triangulation. } \label{fig:introJiggling}
\end{figure}

The proof starts by subdividing the triangulation in a crystalline manner. This ensures in particular that the size of the simplices decreases uniformly. 
Hence by subdividing, we can ensure that the distribution becomes almost constant over each simplex.

The next step is to inductively perturb the vertices of the triangulation such that the simplices they span are transverse. Here, transverse is meant for each simplex with respect to a constant foliation approximating the distribution over that simplex. This is the main technical part of the proof, where Thurston shows that some uniform amount of transversality to the constant foliation can be achieved, independent of the number of subdivisions, using various bounds on the size of the simplices and distances in the Grassmannian. 

To conclude, Thurston needs to deduce transversality to the distribution itself. For this he uses that the obtained transversality is uniform and that hence, by taking the number of subdivisions large enough, the transversality of the simplices to the constant foliation overcomes the variation of the distribution within the simplex. It follows that the triangulation is transverse (and moreover, in general position).

Thurston first deals with all of the above in the case where the triangulated manifold $M$ is Euclidean. Then, he sketches the proof for a general manifold $M$ in an extrinsic manner by embedding the manifold $M$ into Euclidean space, jiggling the triangulation and projecting it back to $M$. 

\subsection{A variation of Thurston's proof} \label{sec:IntroThJiggling}
We now discuss our point of view on Thurston's jiggling and how it differs from Thurston's original proof. 

Throughout Thurston's proof, the notion of jiggling remains rather implicit. To make it explicit, we first recall that a triangulation is a piecewise smooth map defined on a linear polyhedron such that it is both a piecewise smooth embedding and a homeomorphism. A polyhedron $|K|$ is the union of the linear simplices in a simplicial complex $K$ in Euclidean space. We observe that for jiggling it is useful to separate the map from the simplicial complex. This brings us to the following. 
\begin{definition}\label{def:ThJiggling}
	Let $N$ be a smooth manifold and let $f:|K| \to N$ and $f': |K'| \to N$ be piecewise smooth maps with respect to finite simplicial complexes $K$ and $K'$ respectively. We say that $(f',K')$ is an \textbf{$\epsilon$-jiggling} of $(f,K)$ if $K'$ is a subdivision of $K$ and $d_{C^1}(f,f') < \varepsilon$, where $\varepsilon \in \R_{>0}$.
\end{definition}
With this separation of map and simplicial complex, we can jiggle a triangulation $T: |K| \to N$, by subdividing $K$ and perturbing the map $T$.

As does Thurston, we subdivide our simplicial complex in a crystalline manner such that the distribution becomes almost constant. We also perturb the vertices inductively such that the simplices they span are transverse. However, our approach to choosing the perturbation diverges from Thurston's. Fundamentally, our arguments reduce to the following elementary observation: a $1$-simplex $\langle p,q \rangle$ in $\R^n$ is transverse to a constant foliation $\fol$ of rank $k<n$ if and only if $p$ and $q$ do not agree under the projection $\R^n \to \R^n / \fol \simeq \R^{n-k}$. The transversality of a $d$-simplex $\join{p}{\Delta}$ similarly follows if the $(d-1)$-simplex $\Delta$ is transverse and if the projection of $p$ and the affine span of $\Delta$ do not intersect in $\R^{n-k}$. Then by just using the compactness of the Grassmannian and the size of the simplices, we obtain a uniform bound on the transversality we achieve with respect to a constant foliation approximation the distribution. As in Thurston's proof this then implies transversality to the distribution itself.

Similar to Thurston, we first deal with the case where the codomain of the map $f$ is Euclidean. We however prefer an intrinsic proof in the case where the codomain is a manifold. Hence we develop a method of jiggling a function $f: |K| \to N$ relative to a region where it is already in general position, which allows us to deduce the manifold case by jiggling chart by chart relative to the previous charts. This needs to be done carefully, as general position is generally not preserved under subdivisions. Hence here we first jiggle the identity map $|K| \to |K|$ to a map $\iota':|K|\to|K|$ while making sure the image of $\iota'$ is also $|K|$. Moreover, we achieve that $\iota'$ is such that we are able to subdivide $K$ in a crystalline manner while preserving the general position of the composition $|K|\stackrel{\iota'}{\to}|K|\stackrel{f}{\to} N$. This then allows us to jiggle the map $|K|\to N$ as in the non-relative case.

The main theorem is hence the following, which implies Thurston's jiggling lemma. 

\begin{restatable*}{thm}{TjigglingMfd}
	\label{th:TjigglingMfd}
	Consider a finite simplicial complex $K$ and a manifold $N$ endowed with a distribution $\xi$. Then, given
	\begin{itemize}
		\item $\gamma>0$,
		\item a piecewise embedding $f:|K| \to N$, 
		\item a subcomplex $A$ of $K$ such that $(f|_{|A|},A)$ is in general position with respect to $\xi$,
	\end{itemize}
	there exists a $\gamma$-jiggling $(g,K')$ of $(f,K)$ such that
	\begin{itemize}
		\item $(g,K')$ is in general position with respect to $\xi$, and 
		\item $g|_{|A|} = f|_{|A|}$.
	\end{itemize}
\end{restatable*}

\subsection{Outline}\label{sec:Outline}
In \cref{sec:prelim} we discuss the preliminaries, including simplicial complexes, piecewise maps and triangulations. In \cref{sec:crysSubdiv} we discuss crystalline subdivisions of triangulations. In \cref{sec:linearizing} we show how we can (locally) approximate a map or section by its linearization, reducing the jiggling of general maps and sections to the jiggling of piecewise linear maps and sections. Most of \cref{sec:prelim,sec:crysSubdiv,sec:linearizing} appears already in \cite{FPTjiggling}, which is joint work of the author with \'Alvaro del Pino and Lauran Toussaint. Thus, we skip most of the proofs here, but they can all be found in \cite{FPTjiggling}.

In \cref{sec:prelimTransv} we discuss various notions of transversality, including general position and semitransversality. We relate some of these notions in \cref{sec:transEstimates}, where we also establish alternative criteria to check transversality. Additionally, we discuss how to reduce the study of transversality with respect to (not necessarily constant) distributions to the case of constant foliations. In \cref{sec:PerturbVertex} we consider the situation where we are given a point $p$ and a set $\SD$ of linear simplices that are transverse to a constant foliation $\fol$. Then, we discuss how to perturb $p$ to $p'$ such that the simplices spanned by $p'$ and $\SD$ are also transverse to $\fol$. For this, the quantitative notion of transversality we introduce in \cref{sec:prelimTransv}, semitransversality, is particularly useful. Then in \cref{sec:Tjiggling} we jiggle using the methods developed in \cref{sec:PerturbVertex}, recovering Thurston's jiggling lemma.

\subsection{Acknowledgements}
The author wants to thank \'Alvaro del Pino and Lauran Toussaint for numerous discussions on jiggling and for reading multiple drafts of this paper. This paper forms a part of the author's PhD thesis. Hence, the author would like to thank the reading committee, consisting of M\'{e}lanie Bertelson, Kai Cieliebak, Ga\"{e}l Meigniez, Ieke Moerdijk and Thomas Rot, for reviewing the thesis and offering their feedback.


\section{Preliminaries} \label{sec:prelim}

In this section we cover some basics and fix notation. We discuss simplicial complexes in \cref{ssec:simplicialComplexes}, piecewise linear and smooth maps in \cref{sec:piecewiseMaps} and we end with triangulations in \cref{sec:triangul}.

\subsection{Simplicial complexes} \label{ssec:simplicialComplexes}

We define the standard $m$-dimensional simplex $\Delta^m \subset \R^m$ as 
\begin{equation*}
	\Delta^m = \{(t_1,\dots,t_m) \in \R^m \mid \sum_{i=1}^m t_i \leq 1 \text{ and } t_i \geq 0 \text{ for all } i \}.
\end{equation*}
A \textbf{linear simplex} is then any subset of Euclidean space that is affinely isomorphic to a standard simplex. Given a set $\{p_0,\dots,p_{m}\}$ of points in $\R^N$, we denote by $\langle p_0,\dots,p_m \rangle$ the linear simplex they span in $\R^N$. By a \textbf{face} of a linear simplex we refer to a subsimplex of any dimension.

Simplices can be glued along their codimension-1 faces if they form a so-called simplicial complex:
\begin{definition}
A \textbf{simplicial complex} is a locally finite set $K$ of linear simplices in an Euclidean space such that
	\begin{itemize}
		\item if $\sigma \in K$ then its faces are also in $K$, and
		\item if $\sigma,\sigma'\in K$ then $\sigma \cap \sigma'$ is either empty or a face of both $\sigma$ and $\sigma'$.
	\end{itemize}
\end{definition}
We denote by $K^{(\topd)}$ the set of top-dimensional simplices of a simplicial complex $K$.

A linear polyhedron is the topological space that is spanned by a simplicial complex. We introduce the word ``linear'' to make a distinction with polyhedra living in arbitrary manifolds, although we will not use these explicitly.
\begin{definition}
	A \textbf{linear polyhedron} $P$ is the union of the linear simplices in a simplicial complex $K$. That is, $P = \cup_{\Delta \in K} \Delta$. In this case, we call $K$ a \textbf{triangulation} of $P$ and write $P = |K|$.
\end{definition}
We will often work with simplicial complexes of \textbf{pure} dimension. That is, every simplex should be contained in a simplex of top dimension.

We can subdivide any simplicial complex to obtain a new simplicial complex triangulating the same polyhedron. In particular, we remark that any two triangulations of a given polyhedron have a common subdivision.
\begin{definition}
	A \textbf{subdivision of a simplicial complex} $K$ is a simplicial complex $K'$ such that each $\Delta \in K$ satisfies $\Delta= \cup_{i \in I} \Delta'_i$ for a finite collection of $\Delta'_i \in K'$.
\end{definition}

\subsubsection{Adjacency} \label{sec:prelimAdjacency}

The appropriate notion of a neighborhood of a linear simplex in a simplicial complex is that of a star, which is defined using the concept of adjacency. Related is the notion of a ring.
\begin{definition} \label{def:starRing}
Consider a simplicial complex $K$ including a linear simplex (or more generally, a subcomplex) $Q$.
\begin{itemize}
\item Two linear simplices are \textbf{adjacent} if they share a face.
\item The \textbf{star} $\str{Q}$ of $Q$ is the set of all its adjacent simplices and their faces.
\item The \textbf{closure} $\cl(A)$ of a subset $A$ of a simplicial complex is the smallest subcomplex containing the subset $A$.
\item We define the \textbf{ring} around $Q$ as $\ring{Q} = \cl(\str (Q) \setminus Q)$.
\end{itemize}  
\end{definition}
We write $\strT[K]{Q}$ and $\ringT[K]{Q}$ if we want to emphasize the simplicial complex we are working with. The $n$-fold iteration of $\str$ is denoted by $\strN{n}$.

We also introduce the set of vertices $w$ that neighbor a given vertex $v$, in the sense that both $v$ and $w$ are adjacent to the same edge, using the notion of a link. 
\begin{definition}
	Consider a simplicial complex $K$ with a vertex $v\in K$.
	\begin{itemize}
		\item The \textbf{link} of $v$, denoted $\link(v)$, is the set of all simplices $\sigma \in K$ such that $v\notin \sigma$ and such that the simplex spanned by $v$ and $\sigma$ is an element of $K$.
		\item The \textbf{vertex-link} of $v$, denoted $\vlink(v)$, is the set of vertices in $\link(p)$.
	\end{itemize}
\end{definition}

\subsection{Piecewise maps} \label{sec:piecewiseMaps}

When working with maps defined on polyhedra, it is natural to consider either piecewise linear or piecewise smooth maps. We discuss both, although we first recall the notion of the first jet bundle to be able to endow the space of such maps with a $C^1$-metric. 

\subsubsection{Jet bundles} \label{sec:JetHPrinc}

In this section we briefly recall the standard terminology on (first order) jet bundles. For a more elaborate overview we refer to \cite{CiElMi,Gr86}.

Given two manifold $M$ and $N$, the \textbf{first jet bundle} of maps $M \to N$ is the bundle $J^1(M,N) \to M$ of first order Taylor polynomials of maps $M \to N$. The projection is defined by sending a Taylor polynomial at $x\in M$ to the point $x$. We observe that such a Taylor polynomial at $x$ can be represented by a map locally defined around $x$. Vice versa, every map $f : M \to N$ induces a section $j^1 f : M \to J^1(M,N)$.

\subsubsection{Piecewise linear/smooth maps} \label{sec:PLSmaps}

Between linear polyhedra, the natural classes of maps to consider are:
\begin{definition}
Let $P$ be a linear polyhedron. A map $f: P \rightarrow \R^n$ is \textbf{piecewise linear/smooth} if:
\begin{itemize}
    \item the map $f$ is continuous, and
    \item for some triangulation $K$ of $P$, the maps $f|_\Delta$ are smooth for all $\Delta \in K$.
\end{itemize}
\end{definition}
If we want to emphasize the role of $K$, we will write that $(f,K)$ is a piecewise linear/smooth map or that $f$ is piecewise linear/smooth with respect to $K$. We denote the set of piecewise linear maps by $\MapsPL(P,\R^n)$ and the set of piecewise smooth maps by $\MapsPS(P,N)$. 
We note that $\MapsPS(P,\R^n)$ contains $\MapsPL(P,\R^n)$.

The set $\MapsPS(P,N)$ can readily be endowed with the $C^0$-topology (either the weak or the strong version), by interpreting it as a subset of $C^0(P,N)$. 
To define the $C^1$-topology in $\MapsPS(P,N)$, it is convenient to assume that $P$ is of pure dimension. We fix an auxiliary triangulation $K$ of $P$ to make sense of $J^1(\Delta,N)$ for each top simplex $\Delta \in P$. On each $J^1(\Delta,N)$ we fix a metric, so that we can make sense of the $C^1$-metric on $\MapsPS(\Delta,N)$. Then we define:
\begin{definition} \label{def:CrTopPolyhedron}
Consider a pair of maps $f_1, f_2 \in \MapsPS(P,N)$, each piecewise smooth with respect to a triangulation $K_i$ of a compact polyhedron $P$, for $i=1,2$. Consider a triangulation $K'$ subdividing $K_1$, $K_2$ and $K$. We write $\dist{1}(f_1,f_2) < \epsilon$ if for every top-dimensional simplex $\Delta \in K'$ we have $\dist{1}(f_1|_\Delta,f_2|_\Delta) < \epsilon$, where $\epsilon \in \R_{>0}$. 
\end{definition}
We observe that the above distance on $\MapsPS(P,N)$ depends on our choice of metrics on each $J^1(\Delta,N)$ and therefore it also depends on $K$. It does however not depend on the choice of $K'$. The underlying topologies (known as respectively the weak and strong) do not depend on any of these choices.

\subsection{Triangulations} \label{sec:triangul}

We now recall the notion of a triangulation of a manifold, which we think of as a decomposition of the manifold into smooth simplices. This combinatorial description of the manifold is particularly useful for local arguments and, as we shall see, will turn our arguments into arguments about linear polyhedra. 
For a more detailed account of triangulations we refer to \cite[Ch. IV.B]{whitney2012geometric} and \cite{lurie2009topics}.

\begin{definition}
	A \textbf{triangulation}  $T: |K| \rightarrow M$ of a manifold $M$ consists of a simplicial complex $K$ and a family of smooth embeddings $(T_\Delta : \Delta \to M)_{\Delta \in K}$ that glue to a homeomorphism $T: |K| \to T(|K|)$ satisfying $T(|K|) = M$.
\end{definition}

It is a result of Whitehead \cite{Whitehead1940} that any smooth manifold can be triangulated, which is unique up to a piecewise linear homeomorphism. The name ``Whitehead triangulation'' is sometimes used to emphasize the compatibility between the triangulation and the smooth structure of $M$.

\section{Subdivisions} \label{sec:crysSubdiv}

Various methods exist to subdivide a triangulation into smaller simplices, each with their own properties. We focus on a method called crystalline subdivision (\cref{ssec:crysSubdiv}), whose main benefit is that simplices do not get too distorted when subdividing (\cref{sec:propCrystalline}). We discuss in \cref{sec:coveringPolyhedra} how one covers a polyhedron with nice subpolyhedra; we need this to ultimately establish (in \cref{sec:TjigglingRel}) a version of jiggling that is relative in the domain.

\subsection{Crystalline subdivision} \label{ssec:crysSubdiv}

There are various ways of defining crystalline subdivision, and here we follow the definition of Thurston from \cite[p. 227]{Th2}. Crystalline subdivision as described below is based on the observation that we know how to subdivide a cube into smaller cubes. We illustrate the procedure in \cref{fig:crystsubdivT}.

We recall that a simplex, or more generally a simplicial complex, is ordered if its set of vertices is endowed with a total order.

\begin{figure}[h]
	\includegraphics[width=0.4\textwidth,page=4,trim = 0 1.5cm 0 0,clip ]{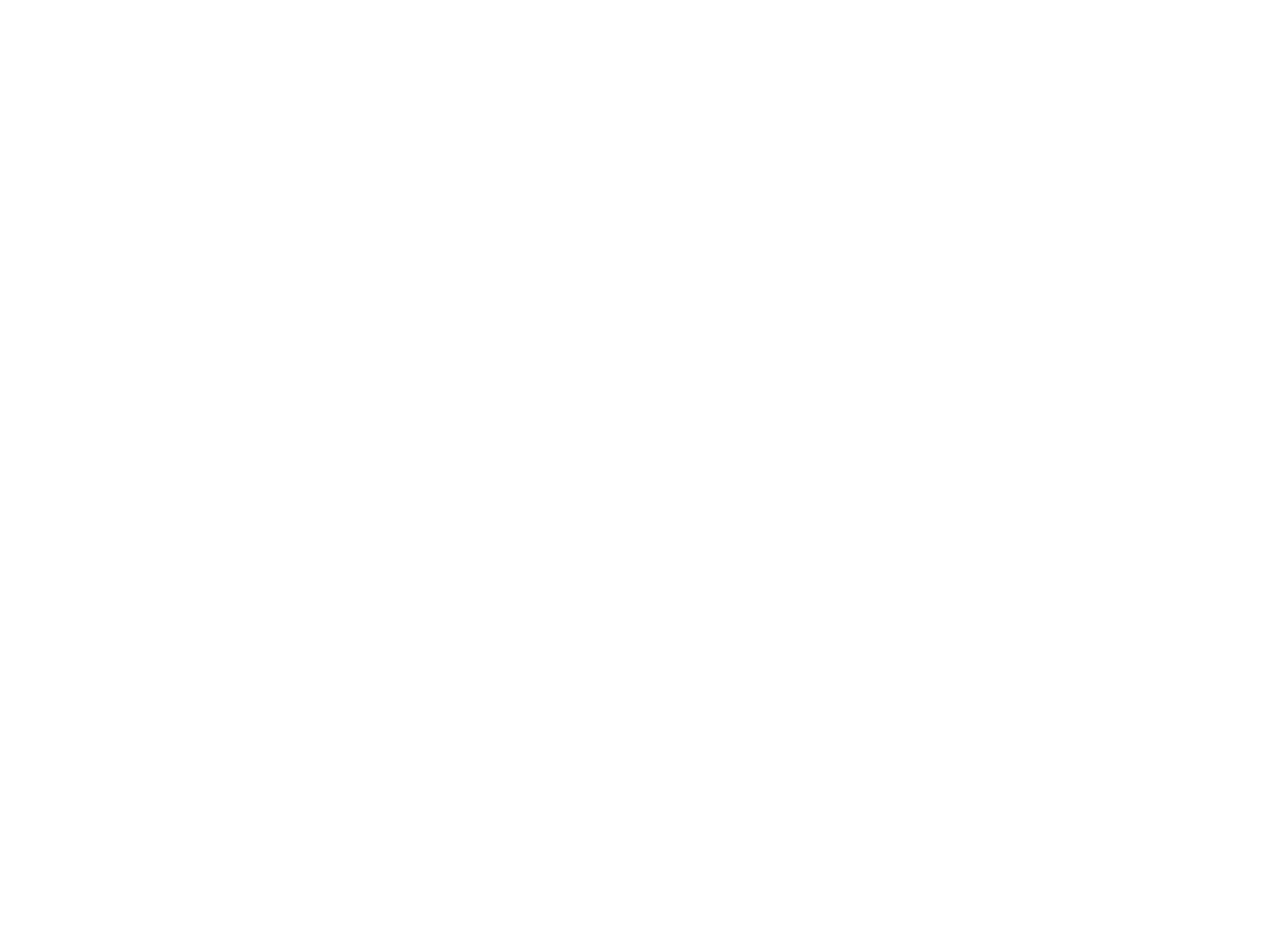}
	\centering
	\caption{The second crystalline subdivision of a $2$-simplex $\Delta$. } \label{fig:crystsubdivT}
\end{figure}

\begin{definition} \label{def:crystsubdivT}
Applying \textbf{crystalline subdivision} to an (ordered) linear $m$-simplex $\Delta$ with $m>0$ is done using the following steps:
	\begin{enumerate}
		\item \label{item:simplexToCube} Include $\Delta =\langle v_{i_0},\dots,v_{i_m} \rangle$ into the standard $m$-cube $I^m$ via the piecewise linear map $\iota$ defined by 
        \[\iota(v_{i_j}) = ({\overbrace{0,\dots,0}^{j }} ,{\overbrace{1,\dots,1}^{m-j }}).\]
		\item \label{item:subdivcube} Subdivide $I^m$ into $2^m$ smaller $m$-cubes of size $1/2$.
		\item Subdivide each of the smaller $m$-cubes into $m!$ smaller linear $m$-simplices. Each linear simplex corresponds to a permutation $\pi$ of $\{0,1,\dots,m\}$ by identifying a permutation $\pi$ with the subset $\{(x_1,\dots, x_m) \in \R^m \mid 0\leq x_{\pi(0)} \leq x_{\pi(1)} \leq \dots \leq x_{\pi(m)} \leq 1\}$. 
        \item Use $\iota$ to pullback the above subdivision of $\iota(\Delta)$ to $\Delta$.
	\end{enumerate}
We define the crystalline subdivision of $\Delta^0$ as itself.
\end{definition}
This procedure makes use of the ordering of the vertices in step~\ref{item:simplexToCube}. Indeed, for $m\geq3$, the resulting subdivision would be different if we applied an affine isomorphism that permutes the vertices of the simplex $\Delta$. It is possible to assign an ordering to the result of crystalline subdivision of $\Delta$, but as we have no use for this, we do not. 

Crystalline subdivision can be iterated but it turns out to be easier to define finer crystalline subdivisions directly. Hence we define the \textbf{$\ell$th crystalline subdivision} of the standard $m$-simplex $\Delta^m$ by instead subdividing the $m$-cube into $2^{\ell m}$ smaller $m$-cubes of size $2^{-\ell}$ in \cref{item:subdivcube} from \cref{def:crystsubdivT}.

To generalize to a simplicial complex $K$, we choose an ordering on $K$ such that the resulting subdivision is well-defined.
\begin{definition}
Let $K$ be an ordered simplicial complex. Its \textbf{$\ell$th crystalline subdivision} $K_\ell$ is defined as the union of the $\ell$th crystalline subdivisions of its simplices.
\end{definition}
Given a triangulation $T: |K| \rightarrow M$ associated to a specified ordered simplicial complex $K$, we can also speak of its $\ell$th crystalline subdivision $T_\ell$, seen as the collection of embeddings $(T_\Delta : \Delta \to M)_{\Delta \in K_\ell}$ given by restricting $T$.

\subsection{Properties of crystalline subdivision} \label{sec:propCrystalline}

Next we discuss some of the properties of crystalline subdivision. They all have the same flavor: since crystalline subdivision does not distort simplices, we are able to obtain various quantitative bounds independent of the order of subdivision $\ell$.

\subsubsection{Vertex-link}
The first advantage is that we are able to bound the maximum size of the vertex-link of each vertex in a simplicial complex and all its crystalline subdivisions. We first prove this in the case where the simplicial complex consists of a single top-dimensional simplex, after which we generalize to finite simplicial complexes.
\begin{lemma} \label{lem:crysboundlink}
	Let $\Delta^m$ be the standard $m$-simplex. Then there exists $C \in \N$ such that for all $\ell\in\N$ and all vertices $v \in \Delta^m_\ell$ we have $| \vlink(v) | \leq C$.
\end{lemma}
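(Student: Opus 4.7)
The plan is to translate the problem from $\Delta^m$ into the cube $I^m$ via the map $\iota$ from \cref{def:crystsubdivT}, where crystalline subdivision has a completely explicit, translation-invariant description. By construction, $\iota$ is affine on $\Delta^m$ and the $\ell$th crystalline subdivision $\Delta^m_\ell$ is obtained by pulling back, through $\iota$, the subdivision of $\iota(\Delta^m) \subset I^m$ induced by cutting $I^m$ into $2^{\ell m}$ small cubes of side $2^{-\ell}$ and then splitting each small cube into $m!$ linear simplices according to permutations. In particular, $\iota$ restricts to a simplicial isomorphism between $\Delta^m_\ell$ and this subdivision of $\iota(\Delta^m)$, and therefore preserves the vertex-link of every vertex. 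It thus suffices to bound, uniformly in $\ell$, the vertex-link of any vertex $\iota(v)$ in the cube-based subdivision of $\iota(\Delta^m)$.

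The key observation is then purely combinatorial. Every vertex of the cube-based subdivision of $I^m$ is a corner of at most $2^m$ of the small cubes (exactly $2^m$ for a fully interior vertex, fewer near $\partial I^m$), and each small cube has exactly $2^m$ corner vertices. Moreover, every edge of the $m!$-simplicial subdivision inside a small cube connects two of that cube's corners, so a vertex $w$ can only lie in $\vlink(\iota(v))$ if $w$ is a corner of one of the small cubes of which $\iota(v)$ is itself a corner. Counting with multiplicity this already gives $|\vlink(\iota(v))| \leq 2^m(2^m-1)$; a slightly sharper count observes that the corners adjacent to $\iota(v)$ in $I^m$ form a $3 \times \dots \times 3$ grid, yielding the bound $3^m - 1$. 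Either way, the bound depends only on $m$.

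To close the argument one checks that intersecting with $\iota(\Delta^m)$ and pulling back through $\iota$ cannot increase the vertex-link: if $v$ lies on the boundary of $\Delta^m$, then $\iota(v)$ lies on one of the walls $\{x_i = x_{i+1}\}$ bounding $\iota(\Delta^m)$, and since those walls are themselves unions of faces of the cube subdivision, the local star of $\iota(v)$ inside $\iota(\Delta^m)$ is only truncated, not enlarged. Setting $C := 2^{2m}$ (or, more tightly, $C := 3^m$) therefore works uniformly in $\ell$ and in the position of $v$.

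I do not expect any real obstacle beyond careful bookkeeping: the content of the lemma is simply that crystalline subdivision is locally modeled on the same cube-and-permutation pattern at every scale $2^{-\ell}$, so combinatorial invariants of a single cube of unit size control those of every small cube. The only point that requires some attention is verifying that the $m!$-splitting of adjacent small cubes glues consistently along shared faces, which is exactly what makes step~\ref{item:subdivcube} of \cref{def:crystsubdivT} produce a simplicial complex in the first place.
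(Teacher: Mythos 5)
Your proposal is correct and follows essentially the same route as the paper: transfer to the cube $I^m$ via the affine inclusion $\iota$, bound the number of small $m$-cubes containing a given vertex by $2^m$, and observe that splitting a cube into $m!$ simplices introduces no new vertices so each cube contributes at most $2^m-1$ link vertices, yielding $2^m(2^m-1)$. Your sharper count of $3^m-1$ via the local $3\times\cdots\times 3$ lattice grid and your explicit remark that pulling back through $\iota$ can only truncate the link are nice refinements, but the core argument is the same.
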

\begin{proof}
	Let $Q$ be the unit cube of dimension $m$ subdivided into $m!$ simplices of dimension $m$ as in \Cref{def:crystsubdivT}. Let $R$ be the same cube but now subdivided into $2^{\ell m}$ cubes of dimension $m$ and of size $2^{-\ell}$. Then from \Cref{def:crystsubdivT} we see that the size of the vertex-link of a vertex $v$ in $R$ can be bounded by the number of $m$-cubes in $R$ containing $v$, times the size of the vertex-link of a vertex when an $m$-cube is subdivided into $m$-simplices. The latter we can easily bound by the number of vertices in an $m$-cube (minus 1), since when subdiving a cube into simplices the number of vertices does not change. Hence, if we let $w$ denote a vertex in $Q$, we see that
	\begin{align*}
		| \vlink(v) | &\leq |\{D \in R \mid v \in D \text{, } D \text{ is an $m$-cube}\} | \cdot |\vlink_{Q}(w) | \\
		&\leq 2^m \cdot (| \{v \in I^m \mid v \text{ is a vertex} \} | -1) \\
		&\leq 2^m (2^m -1). \qedhere
	\end{align*}
\end{proof}

\begin{corollary} \label{cor:crysboundlink}
	Let $K$ be a finite simplicial complex of dimension $m$. Then there exists $C \in \N$ such that for all $\ell\in\N$ and vertices $v \in K_\ell$ we have $| \vlink(v) | \leq C$.
\end{corollary}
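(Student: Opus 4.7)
The plan is to reduce to the single-simplex case already handled in \cref{lem:crysboundlink}, by decomposing $\vlink(v)$ in $K_\ell$ according to which top-dimensional simplices of $K$ contain the point $v$. Any edge of $K_\ell$ incident to $v$ must be contained in some top-dimensional simplex $\Delta \in K^{(\topd)}$ with $v \in \Delta$ (in its subdivided form $\Delta_\ell$), so
\[
|\vlink_{K_\ell}(v)| \;\le\; \sum_{\substack{\Delta \in K^{(\topd)}\\ v \in \Delta}} |\vlink_{\Delta_\ell}(v)|.
\]
It then suffices to bound each summand uniformly in $\ell$ and to bound the number of nonzero summands uniformly in $\ell$.

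For the first task, I would produce a constant $C_0$ depending only on $m$ such that $|\vlink_{\Delta_\ell}(v)| \le C_0$ for every top-dimensional $\Delta \in K$, every $\ell$, and every vertex $v \in \Delta_\ell$. Each such $\Delta$ is affinely isomorphic to $\Delta^m$, crystalline subdivision is defined through an affine inclusion of $\Delta$ into the standard cube as in \cref{def:crystsubdivT}, and the cardinality of a vertex-link is preserved by affine isomorphisms; hence the bound $C_0 = 2^m(2^m - 1)$ obtained in the proof of \cref{lem:crysboundlink} transfers verbatim. The ordering chosen on $\Delta$ affects which subdivision is produced, but not the cardinality bound, and the lemma applies at all vertices of $\Delta_\ell$, including boundary ones, so no separate treatment is needed for vertices lying on faces of $\Delta$.

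For the second task, any vertex $v \in K_\ell$ lies in the relative interior of a unique simplex $\sigma_v \in K$, and the top-dimensional simplices of $K$ containing $v$ are exactly those containing $\sigma_v$. Since $K$ is finite, the quantity
\[
D \;:=\; \max_{\sigma \in K}\, \bigl|\{\Delta \in K^{(\topd)} : \sigma \subseteq \Delta\}\bigr|
\]
is a finite constant depending only on $K$ and, crucially, independent of $\ell$. Combining the two estimates yields $|\vlink_{K_\ell}(v)| \le D \cdot C_0$, so $C := D \cdot C_0$ is the required bound. I do not expect any genuine obstacle here beyond this bookkeeping; the essential point being exploited is that crystalline subdivision is local to each top simplex of $K$, so that counting edges of $K_\ell$ at $v$ splits cleanly across the finitely many top simplices meeting at $v$.
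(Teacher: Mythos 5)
Your proof is correct and follows essentially the same route as the paper's: bound the vertex-link contribution within each simplex of $K$ meeting $v$ by the single-simplex estimate $2^m(2^m-1)$ from \cref{lem:crysboundlink}, then multiply by a finite, $\ell$-independent count of simplices of $K$ around $v$ (you use the tighter local constant $D$; the paper coarsely uses $|K|$). One small caveat: your decomposition over $K^{(\topd)}$ implicitly assumes $K$ is of pure dimension, since if $K$ has a maximal simplex $\tau$ of dimension $<m$ then an edge of $K_\ell$ in the relative interior of $\tau$ lies in no top-dimensional simplex of $K$; summing over the maximal simplices of $K$ instead of $K^{(\topd)}$ repairs this, and the per-simplex bound $2^m(2^m-1)$ still applies since every simplex of $K$ has dimension at most $m$.
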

\begin{proof} 
	Denote by $\Delta \in K$ a simplex in $K$ which contains $v$. Then we see that the size of the vertex-link can be bounded by size of the vertex-link contained in $\Delta$, times the number of simplices in $K$ incident to $v$. Hence we see that
	\begin{align*}
		| \vlink(v) | &\leq |\vlink(v) \cap \Delta| \cdot | \{ \Delta_\sigma \in K \mid \Delta_\sigma \text{ incident to } v \} | \\
		&\leq 2^m (2^m -1) \cdot |\{ \Delta_\sigma \in K \} |	\qedhere
	\end{align*}
\end{proof}

\subsubsection{Model simplices} \label{sec:modelSimpl}
Another property of crystalline subdivision is that each of the simplices in the $\ell$th crystalline subdivision of a simplex $\Delta$ is equivalent to a simplex in the first subdivision of $\Delta$, up to scaling and translation. Model simplices appear already in Thurston's work~\cite{Th2}, albeit in a different formulation. 

\begin{lemma} \label{lem:csubdivExact}
Let $K$ be a finite, ordered simplicial complex in $\R^N$ of pure dimension. Then, there exists a finite collection of \textbf{model simplices} $\SC = \{\Delta_i \subset \R^N \mid i = 0,\dots, I\}$ with the following property: for any $\ell \in \N$ and $\Delta \in K_\ell^{(\topd)}$ there exists 
	\begin{itemize}
		\item a model simplex $\Delta_i \in \SC$, and
		\item a map $t : \R^N \to \R^N$ which is a composition of a translation and a scaling by $2^\ell$
	\end{itemize}
such that $\Delta_i = t(\Delta)$.
\end{lemma}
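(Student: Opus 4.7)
The plan is to unfold the inductive definition of crystalline subdivision one top-simplex at a time and then take a union. Fix a top-dimensional $m$-simplex $\Delta_0 \in K^{(\topd)}$. By \cref{def:crystsubdivT}, the $\ell$-th crystalline subdivision of $\Delta_0$ is the $\iota^{-1}$-pullback of the subdivision of $\iota(\Delta_0) \subset I^m$ obtained by chopping $I^m$ into $2^{\ell m}$ small cubes of side $2^{-\ell}$ and then decomposing each small cube into its $m!$ permutation-simplices. The first observation I need is that, although called piecewise linear, $\iota$ restricted to the single simplex $\Delta_0$ is in fact affine: an $m$-simplex is the affine hull of its $m+1$ vertices, so any map prescribed on those vertices is affine on $\Delta_0$. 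In particular $\iota(\Delta_0)$ is the identity-permutation simplex $\{0 \leq x_1 \leq \dots \leq x_m \leq 1\} \subset I^m$, and $\iota^{-1} : \iota(\Delta_0) \to \Delta_0 \subset \R^N$ is an affine map, which I will write as $\iota^{-1}(x) = B_{\Delta_0} x + c_{\Delta_0}$.

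Next I would record the explicit form of the top-simplices of the subdivided $I^m$: each is of the shape $2^{-\ell}(P_\pi + k)$, where $P_\pi \subset I^m$ is the permutation-simplex associated to a permutation $\pi$ and $k \in \{0, 1, \dots, 2^\ell - 1\}^m$ indexes the small cube. Applying $\iota^{-1}$ to those that lie inside $\iota(\Delta_0)$, the top-simplices of the $\ell$-th crystalline subdivision of $\Delta_0$ are exactly
\[
\iota^{-1}\bigl(2^{-\ell}(P_\pi + k)\bigr) \;=\; 2^{-\ell}\, B_{\Delta_0} P_\pi \;+\; \bigl(2^{-\ell} B_{\Delta_0} k + c_{\Delta_0}\bigr),
\]
i.e.\ translates in $\R^N$ of the fixed simplex $2^{-\ell} B_{\Delta_0} P_\pi$.

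To package this, I would fix, for each pair $(\Delta_0, \pi)$, a canonical placement of $B_{\Delta_0} P_\pi$ in $\R^N$ (say, translated so that a distinguished vertex lies at the origin) and declare it to be a model simplex $\Delta_{\Delta_0,\pi}$. For every $\sigma$ in the $\ell$-th crystalline subdivision of $\Delta_0$, I can then choose a translation vector $d_\sigma \in \R^N$ so that the map $t(x) = 2^\ell x + d_\sigma$ sends $\sigma$ onto $\Delta_{\Delta_0, \pi}$: the factor $2^\ell$ cancels the $2^{-\ell}$ in the formula above, and $d_\sigma$ is chosen to absorb $2^\ell(2^{-\ell} B_{\Delta_0} k + c_{\Delta_0})$ and realign the result to the canonical position. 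Setting $\SC$ to be the union over $\Delta_0 \in K^{(\topd)}$ and permutations $\pi$ of these canonical placements gives a finite collection, of size at most $m! \cdot |K^{(\topd)}|$, with the required property.

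There is no real obstacle here; the argument is essentially bookkeeping via affinity. The only mild subtlety worth making explicit is the affinity of $\iota$ on each top-simplex — this is what converts the $2^{-\ell}$-scaling and lattice-translation structure of the cube subdivision of $I^m$ into a rigid $2^{-\ell}$-scaling-plus-translation relationship back in $\R^N$, which is exactly what the lemma demands.
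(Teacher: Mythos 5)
Your proof is correct, and the argument — that $\iota|_{\Delta_0}$ is affine, so the $\ell$th crystalline subdivision of a top simplex $\Delta_0$ pulls back the $2^{-\ell}$-lattice translates of the $m!$ permutation simplices, yielding finitely many similarity classes independent of $\ell$ — is the natural one and matches the approach the paper defers to its companion reference. The key observation you flag explicitly (affinity of $\iota$ on a single simplex, converting the cube-lattice structure into rigid translation-and-$2^\ell$-scaling equivalences in $\R^N$) is indeed the crux, and the union over $\Delta_0 \in K^{(\topd)}$ and permutations $\pi$ gives the required finite family $\SC$.
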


\subsubsection{Shape of linear simplices} \label{sec:shapeSimplices}
In this section we introduce three quantities related to a simplex, which tell us about the size and shape of a simplex. 

To start with, given a simplex, we are interested in the maximal ($\rmax$) and minimal ($\rmin$) distances between a vertex and the face opposite to it. The former quantity controls for instance how well a map is approximated by its linearization with respect to a given triangulation (see \cref{sec:linearizing}). Their ratio on the other hand tells us how degenerate the simplex is. 
For $\rmax$ we note that the maximum distance between a vertex and its opposite face agrees with the maximum length of the edges adjacent to both the vertex and the opposite face. For $\rmin$ we interpret the distance between vertex and opposite face as the distance between the vertex and the affine plane spanned by the face. Hence we introduce the notation $\aspan(S)$ for the \textbf{affine span} of a subset $S \subset \R^n$.

Below we define $\rmax$ and $\rmin$ for $(m+1)$-tuples of points in $\R^N$, but we can also speak of $\rmax$ and $\rmin$ of a linear simplex $\Delta$ by identifying $\Delta$ with its vertices.

\begin{definition} \label{def:rminmax}
	The functions \[\rmin, \rmax : \R^N \times \dots \times \R^N = \R^{N (m+1) } \to \R\] are defined by
	\begin{align*}
		\rmin(p_0,\dots, p_m) &= \min_{i \in [m]} d \left(p_i, \aspan \left( \langle p_0, \dots, \hat{p_i},\dots,p_m \rangle \right) \right) \text{ \quad and} \\
		\rmax(p_0,\dots, p_m) &= \max_{i,j \in [m]} d(p_i,p_j).
	\end{align*}
\end{definition}

Another quantity describing the degeneracy of a simplex is the following quantity, which we denote by $\maxcoeff$. 
As in the case of $\rmax$ and $\rmin$, we define $\maxcoeff$ for $(m+1)$-tuples of points in $\R^N$. The difference is however that here we need to assume that the simplex $\Delta$ is ordered to be able to speak of $\maxcoeff(\Delta)$, since $\maxcoeff$ is only invariant under permutations of its input fixing the first element. 
\begin{definition}\label{def:maxcoeff}
	The function \[\Lambda : \left\{(v_0,\dots,v_m) \in \R^N \times \dots \times \R^N = \R^{N (m+1) } \mid v_i \neq v_j \text{ for all } i \neq j \right\}  \to \R\] is defined by
	\[ \Lambda(v_0,\dots,v_m) = \max_{\substack{  \lambda_1,\dots,\lambda_m \in \R \\ |\sum \lambda_i (v_i-v_0)| =1} }  |\lambda_i|.\]
\end{definition}
We point out that if $v_0=0$ and the points $v_1,\dots,v_m$ form an orthogonal frame, the quantity $\Lambda(v_0,\dots,v_m)$ equals $1/\rmin(v_0,\dots,v_m)$.\footnote{For the interested reader, we note that a closed form expression can be obtained for $\maxcoeff$ by using the Lagrange multiplier method. If $m=2$ and $v_0=0$, we obtain for instance that
\[\Lambda(0,v_1,v_2) = \frac{\max\{|v_1|,|v_2|\}}{\sqrt{|v_1|^2|v_2|^2-\langle v_1,v_2 \rangle^2}}.\] In this case we see that the denominator resembles the Cauchy-Schwarz inequality and hence measures the degeneracy of the simplex $\langle 0, v_1,v_2\rangle$. }

A priori we cannot bound the quantities $\rmin$, $\rmax$ and $\maxcoeff$ among all subdivisions of a linear polyhedron, since the space of all linear simplices in $\R^N$ is not compact. However, due to the existence of the model simplices (from \cref{lem:csubdivExact}), we can bound these quantities when considering crystalline subdivisions. We point out that the product of $\rmax$ and $\maxcoeff$ is in particular independent of the number of subdivisions. Additionally, we observe that $\rmin$ and $\rmax$ decrease when we apply crystalline subdivisions, whereas $\Lambda$ increases.
\begin{lemma} \label{lem:rmaxminBound}
	Let $K$ be simplicial complex that is ordered and finite. Then, there exists $B,C,D,E \in \R_+$ such that for all $\ell \in \N$:
	\begin{align*}
		\min_{\Delta \in K_\ell} \rmin(\Delta) = B \cdot 2^{-\ell}&, \quad \max_{\Delta \in K_\ell} \rmax(\Delta) = C \cdot 2^{-\ell}     \\   
		\max_{\Delta \in K_\ell} \Lambda(\Delta) = D \cdot 2^{\ell}   \quad \textrm{ and }& \quad    \max_{\Delta \in K_\ell} \rmax(\Delta) \cdot  \maxcoeff(\Delta) = CD .
	\end{align*}
\end{lemma}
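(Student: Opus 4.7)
The plan is to reduce all four estimates to the finite collection $\SC$ of model simplices from \Cref{lem:csubdivExact}. The key observation is that translations preserve $\rmin$, $\rmax$ and $\maxcoeff$, while a dilation by a factor $r>0$ acts on them in a controlled way. Concretely, $\rmin$ and $\rmax$ are Euclidean distances, so they scale as $\rmin(r\Delta)=r\rmin(\Delta)$ and $\rmax(r\Delta)=r\rmax(\Delta)$. For $\maxcoeff$, if we rescale the vertices $v_i\mapsto r v_i$ the defining constraint $|\sum\lambda_i(v_i-v_0)|=1$ forces the optimal $\lambda_i$ to be multiplied by $r^{-1}$, so $\maxcoeff(r\Delta)=r^{-1}\maxcoeff(\Delta)$. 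In particular the product $\rmax\cdot\maxcoeff$ is scale-invariant.

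Given these scaling rules, I would define
\[
B := \min_{\Delta_i \in \SC} \rmin(\Delta_i), \qquad C := \max_{\Delta_i \in \SC} \rmax(\Delta_i), \qquad D := \max_{\Delta_i \in \SC} \maxcoeff(\Delta_i),
\]
which are all finite and positive because $\SC$ is a finite collection of nondegenerate ordered simplices. For the third equality, $\SC$ must come equipped with an ordering compatible with the one coming from the crystalline construction, which I would fix once and for all when invoking \Cref{lem:csubdivExact}.

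For any top-dimensional $\Delta\in K_\ell^{(\topd)}$, \Cref{lem:csubdivExact} gives a model simplex $\Delta_i$ and a translation-and-dilation $t$ with scaling factor $2^\ell$ such that $\Delta_i = t(\Delta)$, so $\Delta$ is obtained from $\Delta_i$ by translating and scaling by $2^{-\ell}$. Applying the scaling rules above yields
\[
\rmin(\Delta)=2^{-\ell}\rmin(\Delta_i),\qquad \rmax(\Delta)=2^{-\ell}\rmax(\Delta_i),\qquad \maxcoeff(\Delta)=2^{\ell}\maxcoeff(\Delta_i).
\]
Taking the minimum/maximum over $\Delta\in K_\ell^{(\topd)}$ corresponds, via this bijection with $\SC$, to taking the minimum/maximum over $\Delta_i\in\SC$, giving $B\cdot 2^{-\ell}$, $C\cdot 2^{-\ell}$ and $D\cdot 2^{\ell}$ respectively. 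For lower-dimensional $\Delta\in K_\ell$ (should they be included in the statement), I would enlarge $\SC$ to include all faces of model simplices—still a finite collection, so the same conclusion holds after adjusting $B,C,D$. For the product, scale-invariance gives $\rmax(\Delta)\cdot\maxcoeff(\Delta)=\rmax(\Delta_i)\cdot\maxcoeff(\Delta_i)$ for the corresponding $\Delta_i$, and taking the maximum over $\Delta\in K_\ell$ reduces to the maximum over $\SC$, which is at most $CD$.

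There is no real obstacle; the only delicate point is the scaling behaviour of $\maxcoeff$, which has to be unpacked carefully from the definition, and the bookkeeping that $\maxcoeff$ depends on the ordering of vertices, so one has to be sure that the identification $\Delta\leftrightarrow\Delta_i$ supplied by \Cref{lem:csubdivExact} can be taken to respect the orderings. Beyond that, the entire lemma is a routine substitution once the scaling rules and the model-simplex reduction are in hand.
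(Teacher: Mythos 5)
Your reduction to the model simplices of \Cref{lem:csubdivExact} and your scaling analysis of $\rmin$, $\rmax$, $\maxcoeff$ under dilation are correct, and this is the natural route (the paper defers the proof to \cite{FPTjiggling}, so there is no in-text argument to compare against). Your observation that $\maxcoeff$ is sensitive to the ordering and that the identification supplied by \Cref{lem:csubdivExact} must respect orderings is also a real point worth flagging. However, there are two genuine gaps.

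First, you assert that taking the extremum over $\Delta\in K_\ell^{(\topd)}$ ``corresponds, via this bijection with $\SC$, to taking the minimum/maximum over $\Delta_i\in\SC$.'' But \Cref{lem:csubdivExact} only gives a map $K_\ell^{(\topd)}\to\SC$; it is neither injective nor claimed to be surjective. The scaling rules give you
\[
\min_{\Delta\in K_\ell^{(\topd)}}\rmin(\Delta)=2^{-\ell}\min_{\Delta_i\in \SC_\ell}\rmin(\Delta_i),
\]
where $\SC_\ell\subseteq\SC$ is the set of models actually realized at level $\ell$, not all of $\SC$. For the stated \emph{equalities} you need $\min_{\SC_\ell}\rmin=\min_{\SC}\rmin$ (and similarly for $\max\rmax$, $\max\maxcoeff$) for every $\ell$, i.e.\ you need the set of realized models to be stable in $\ell$, or at least to contain the relevant extremizers for every $\ell$. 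This does hold for crystalline subdivision by a self-similarity argument (each simplex of $K_\ell$ produces the same set of model shapes at level $\ell+1$, and the level-$1$ models include scaled translates of the original simplices), but it is exactly the content you cannot skip, and it is not delivered by \Cref{lem:csubdivExact} as stated. Without it you only get the bounds $\geq B\cdot 2^{-\ell}$, $\leq C\cdot 2^{-\ell}$, $\leq D\cdot 2^\ell$, not equalities.

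Second, and more seriously as written, your treatment of the fourth equation does not establish what is claimed. You correctly note that $\rmax\cdot\maxcoeff$ is scale-invariant and that the maximum over $\SC$ is \emph{at most} $CD$, but the lemma asserts $\max_{\Delta\in K_\ell}\rmax(\Delta)\maxcoeff(\Delta)=CD$. Equality would require a single model $\Delta_i\in\SC$ to simultaneously maximize both $\rmax$ and $\maxcoeff$, which there is no reason to expect. The right constant is $E:=\max_{\Delta_i\in\SC}\rmax(\Delta_i)\maxcoeff(\Delta_i)$, which satisfies $E\leq CD$; note that the lemma statement introduces an otherwise-unused constant $E$, which suggests the displayed $CD$ is a typo for $E$. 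In any case, as a proof of the statement as written, your argument stops at an inequality while the claim is an equality, and this should be called out rather than folded into ``routine substitution.''
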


\subsection{Nice covers of polyhedra} \label{sec:coveringPolyhedra}

Our arguments often have to be localized to subpolyhedra. To this end, it is important for us to be able to cover a given polyhedron by subpolyhedra that are nice. We explain how to do this now.

Before we get to the key definition, recall that the join between two subsets $A,B$ of $\R^N$ is the set
\[ \join{A}{B} = \{ t a + (1-t) b \mid a \in A , b \in B \text{ and } t \in [0,1]\}. \]
In particular, when we take the join of a (suitable) pair of linear simplices $\Delta_1,\Delta_2$ we end up with a higher-dimensional simplex $\Delta \coloneq \join{\Delta_1}{\Delta_2}$ having the two original simplices as opposing faces.

\begin{definition} \label{def:niceSubcomplex}
Let $K$ be a simplicial complex and let $K'$ be a subcomplex. We will say that $K'$ is \textbf{nice} if for each simplex $\Delta \in \str(K')$ the subcomplex $\Delta \cap K'$ is a face of $\Delta$.
\end{definition}

\begin{figure}[h]
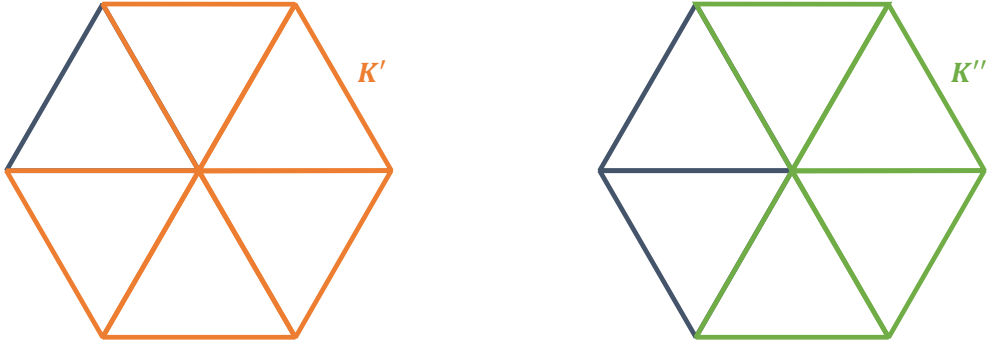

    \centering
    \begin{subfigure}[t]{0.49\linewidth}
        \centering
        \includegraphics[width=\linewidth,page=9, clip=true, trim = 0 3cm 0 0cm]{Fig_genjiggling}
    \end{subfigure}
    \hfill
    \begin{subfigure}[t]{0.49\linewidth}
        \centering
        \includegraphics[width=\linewidth,page=10, clip=true, trim = 0 3cm 0 0cm]{Fig_genjiggling}
    \end{subfigure}
	\centering
	\caption{We illustrate here subcomplexes $K'$ and $K''$ of a simplicial complex $K$, where $K'$ is not nice and $K''$ is.}  \label{fig:niceSubcomplex}
\end{figure}

We have illustrated \cref{def:niceSubcomplex} in \cref{fig:niceSubcomplex}. The meaning of niceness is that any $\Delta$ in the ring of $K'$ can be thus seen as the join of two faces, $A$ and $B$, with $A \in K'$ and $B$ disjoint from $K'$. This is useful in order to interpolate from a map/section given over $A$ to a map/section given over $B$ (as in \cref{ssec:relativeLinearization}).

The question now is how to cover a simplicial complex by nice subcomplexes. One option is to use barycentric subdivision, since every simplex in a barycentric subdivision intersects the boundary of an unsubdivided simplex in at most one face. Hence we obtain the following.

\begin{lemma}\label{lem:niceBarySubdiv}
	Let $K$ be a simplicial complex with a subcomplex $K'$. After barycentrically subdividing $K$ once, the subcomplex subdividing $K'$ is nice.
\end{lemma}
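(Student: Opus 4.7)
The plan is to use the standard flag description of barycentric subdivision. Recall that a simplex of $\operatorname{sd}(K)$ is specified by a flag $\sigma_0 < \sigma_1 < \dots < \sigma_k$ of simplices of $K$ (ordered by the face relation), the corresponding simplex of $\operatorname{sd}(K)$ being spanned by the barycenters $b(\sigma_i)$. Under this correspondence, a simplex of $\operatorname{sd}(K)$ lies in $\operatorname{sd}(K')$ if and only if every entry of its flag lies in $K'$, and face inclusion corresponds to taking subflags.

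With this setup, I would pick an arbitrary $\tau \in \operatorname{str}(\operatorname{sd}(K'))$ with flag $\sigma_0 < \dots < \sigma_k$, and analyse the set
\[I \;=\; \{\,i\in\{0,\dots,k\} : \sigma_i \in K'\,\}.\]
The key structural observation is that $I$ must be an initial segment of $\{0,\dots,k\}$: since $K'$ is a subcomplex, whenever $\sigma_i \in K'$ and $j < i$ the simplex $\sigma_j$ is a face of $\sigma_i$ and hence also lies in $K'$. Moreover, $I$ is non-empty, because $\tau$ being in $\operatorname{str}(\operatorname{sd}(K'))$ forces some non-empty face of $\tau$ to belong to $\operatorname{sd}(K')$, and that face corresponds to a non-empty subflag entirely in $K'$.

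Writing $I = \{0,\dots,m\}$, I claim that $\tau \cap \operatorname{sd}(K')$ is precisely the face $\langle b(\sigma_0),\dots,b(\sigma_m)\rangle$ of $\tau$. The inclusion from the right is immediate, since the flag $\sigma_0 < \dots < \sigma_m$ lies in $K'$. For the reverse inclusion, any simplex in $\tau \cap \operatorname{sd}(K')$ is the face of $\tau$ corresponding to some subflag contained in $K'$, and by the initial segment property this subflag must be a subflag of $\sigma_0 < \dots < \sigma_m$, hence a face of $\langle b(\sigma_0),\dots,b(\sigma_m)\rangle$.

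I do not foresee any serious obstacle here: once the flag description of $\operatorname{sd}(K)$ is in place, the argument reduces to the single combinatorial remark that the trace of a subcomplex on a flag forms an initial segment. The only mild care needed is to check that niceness really does hold for every $\tau \in \operatorname{str}(\operatorname{sd}(K'))$ and not merely for top-dimensional ones, but since the definition of $\operatorname{str}$ already closes under taking faces, the argument above applies to all such $\tau$ uniformly.
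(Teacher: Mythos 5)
The paper does not actually give a proof of this lemma: the preceding section notes that most proofs in \cref{sec:crysSubdiv} are deferred to the companion paper, and here it only offers the one-line motivation that ``every simplex in a barycentric subdivision intersects the boundary of an unsubdivided simplex in at most one face.'' Your flag-theoretic argument is the standard way to make this precise, and the key initial-segment observation is exactly the structural fact underlying the paper's remark, so you are taking the same route, just spelling it out.

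One small slip: you claim $I$ is non-empty for every $\tau \in \str(\operatorname{sd}(K'))$, arguing that membership in the star forces some non-empty face of $\tau$ to lie in $\operatorname{sd}(K')$. This is not quite right under the paper's definition of star (\cref{def:starRing}): $\str(Q)$ contains all simplices adjacent to $Q$ \emph{and their faces}, and a face of an adjacent simplex need not itself meet $Q$. For instance, if $\rho = \langle a, b\rangle$ is adjacent to $\operatorname{sd}(K')$ because $\{a\}\in\operatorname{sd}(K')$, then the face $\tau = \{b\}$ lies in $\str(\operatorname{sd}(K'))$ but has $I = \emptyset$. This does no damage to the result, though: when $I = \emptyset$ the intersection $\tau \cap |\operatorname{sd}(K')|$ is empty and the niceness condition is vacuous (the paper's own phrasing ``at most one face'' makes clear the empty intersection is allowed). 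So the statement you should prove is ``$I$ is an initial segment (possibly empty), and $\tau \cap \operatorname{sd}(K')$ equals the face of $\tau$ spanned by $\{b(\sigma_i) : i \in I\}$,'' with no non-emptiness claim. With that correction the argument is complete.
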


\section{Linearization of piecewise maps} \label{sec:linearizing}
In this section we focus on the the linearization of maps, which allows us to (locally) reduce the study of piecewise smooth maps to piecewise \emph{linear} maps:
\begin{definition} \label{def:linearization}
Let $K$ be a simplicial complex and let $|K|$ be the corresponding linear polyhedron. Let $s: |K| \to \R^n$ be a map that is piecewise smooth with respect to $K$. The \textbf{linearization} $s^\lin : |K| \to \R^n$ of $s$ with respect to $K$ is the unique piecewise linear map such that $s^\lin$ and $s$ agree on the vertices of $K$.
\end{definition}
We observe that $s$ and $s^\lin$ are homotopic through piecewise smooth maps, thanks to linear interpolation.

\subsection{The linearization statement}

Using Taylor's approximation theorem, we obtain the following result, stating that a (local) linearization $s_\ell'$ of a function $s$ is an increasingly better approximation of the function $s$ if we apply crystalline subdivision to the underlying simplicial complex. 

\begin{proposition}\label{cor:linSecDist}
	Let $K$ be a simplicial complex in $\R^N$. Let $s: |K| \to \R^n$ be a map that is piecewise smooth with respect to $K$. Let $K' \subset K$ be a finite subcomplex and consider a neighborhood $\Op(|K'|)$.
	
	Then, for each sufficiently large $\ell$ there exists a map $s_\ell': |K| \to \R^n$ that is piecewise smooth with respect to $K_\ell$ and moreover:
	\begin{itemize}
		\item $s_\ell'$ is piecewise linear over $|K'|$,
		\item $s_\ell' = s$ outside of $\Op(|K'|)$,
		\item $\dist{0}(s_\ell',s) = O(2^{-2\ell})$, and
		\item $\dist{1}(s_\ell',s) = O(2^{-\ell})$.
	\end{itemize}
\end{proposition}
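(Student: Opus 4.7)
The plan is to interpolate smoothly between the piecewise linearization $s^\lin$ of $s$ with respect to $K_\ell$ on $|K'|$ and the original map $s$ outside $\Op(|K'|)$, using a single fixed smooth cutoff function. Since $|K'|$ is compact, I fix once and for all a smooth $\phi : |K| \to [0,1]$ (obtained by restricting a smooth function on an ambient Euclidean neighborhood of $|K|$) such that $\phi \equiv 1$ on a neighborhood of $|K'|$ and $\phi$ has compact support inside $\Op(|K'|)$. For each $\ell$ I then set
\[
  s_\ell'(x) \;=\; \phi(x)\, s^\lin(x) \,+\, \bigl(1-\phi(x)\bigr)\, s(x).
\]
By construction $s_\ell' = s^\lin$ on $|K'|$ (hence piecewise linear there with respect to $K_\ell$), $s_\ell' = s$ outside $\Op(|K'|)$, and $s_\ell'$ is piecewise smooth with respect to $K_\ell$ since each simplex of $K_\ell$ lies inside a simplex of $K$ on which $s$ is smooth and $\phi$ itself is smooth.

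For the quantitative estimates, the difference $s_\ell' - s = \phi\,(s^\lin - s)$, so with $\phi$ fixed everything reduces to bounding $s^\lin - s$ simplex by simplex. On a top-dimensional simplex $\Delta \in K_\ell^{(\topd)}$ the restriction $s|_\Delta$ is smooth and $s^\lin|_\Delta$ is its unique affine interpolant at the vertices of $\Delta$. Taylor-expanding $s$ around a vertex $v_0$ of $\Delta$ and using $s^\lin(v_i)=s(v_i)$, a direct calculation gives
\[
  \|s - s^\lin\|_{C^0(\Delta)} = O\!\bigl(\rmax(\Delta)^2\bigr), \qquad \|Ds - Ds^\lin\|_{C^0(\Delta)} = O\!\bigl(\rmax(\Delta) + \rmax(\Delta)^2\,\maxcoeff(\Delta)\bigr),
\]
with constants depending on $\|D^2 s\|_\infty$ over the fixed simplex of $K$ containing $\Delta$. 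The $C^1$ bound comes from the observation that $Ds^\lin|_\Delta$ is the linear map determined by $v_i - v_0 \mapsto s(v_i) - s(v_0)$, and $\maxcoeff(\Delta)$ is precisely the quantity controlling the norm of the inverse basis change when one solves for $Ds^\lin$ in this non-orthonormal frame.

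The crucial step is then to invoke \cref{lem:rmaxminBound}, which asserts that crystalline subdivision gives $\rmax(\Delta) = O(2^{-\ell})$ and $\rmax(\Delta)\,\maxcoeff(\Delta) = O(1)$ uniformly over all $\Delta \in K_\ell^{(\topd)}$ and all $\ell$. Substituting yields $\|s^\lin - s\|_{C^0} = O(2^{-2\ell})$ and $\|s^\lin - s\|_{C^1} = O(2^{-\ell})$, hence $\dist{0}(s_\ell',s) = O(2^{-2\ell})$ and $\dist{1}(s_\ell',s) = O(2^{-\ell})$; the cross term $\|D\phi\|_\infty \cdot \|s^\lin - s\|_{C^0} = O(2^{-2\ell})$ is absorbed into the $C^1$ estimate. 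I expect the main obstacle to be exactly this uniformity in $\ell$: the standard Lagrange $C^1$ interpolation error blows up as simplices become thin, and the argument only goes through because the finitely many model shapes from \cref{lem:csubdivExact} keep the shape-regularity constants, encoded by the product $\rmax \cdot \maxcoeff$, bounded across all subdivision levels. The hypothesis that $\ell$ be sufficiently large is then used only to make the asymptotic bounds meaningful.
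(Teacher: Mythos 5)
Your construction is correct and matches the paper's intended argument in its essentials: the key mechanism is Taylor's theorem combined with the shape-regularity bounds from \cref{lem:rmaxminBound}, where the crucial fact (as you correctly identify) is that $\rmax(\Delta)\cdot\maxcoeff(\Delta)$ stays bounded across all crystalline subdivision levels, which is exactly what keeps the $C^1$ Lagrange interpolation error from blowing up on thin simplices. Your $C^1$ estimate, derived by solving for $Ds^\lin_\ell$ in the non-orthonormal frame $\{v_i-v_0\}$, is a valid route to the bound and correctly identifies $\maxcoeff$ as the relevant conditioning quantity.

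The one place you diverge from the machinery the paper sets up is the transition region: you interpolate with a single fixed smooth cutoff $\phi$ supported in $\Op(|K'|)$, rather than using the simplicial interpolation of \cref{def:interpol} over the ring of a nice subcomplex (which is what \cref{lem:niceBarySubdiv} and \cref{lem:interpol} are built for). This is a legitimate and in fact slightly more elementary choice here, since the statement only asks for $s'_\ell$ to be \emph{piecewise smooth} with respect to $K_\ell$ outside $|K'|$ — not piecewise linear — so there is no need for the transition to respect the simplicial structure. Two points worth tightening: first, \cref{lem:rmaxminBound} is stated for a \emph{finite} complex, whereas $K$ in the proposition need not be finite; since $\operatorname{supp}\phi$ is compact you should restrict to the finite subcomplex of $K$ whose simplices meet $\operatorname{supp}\phi$ before invoking it. Second, you should make explicit that $s^\lin$ denotes the linearization $s^\lin_\ell$ with respect to $K_\ell$ on all of $|K|$ (not merely on $|K'|$), since the formula $s'_\ell = \phi\,s^\lin_\ell + (1-\phi)\,s$ evaluates $s^\lin_\ell$ throughout $\operatorname{supp}\phi$.
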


We recall from \cref{sec:PLSmaps} that the $C^0$ and $C^1$-distances used here are independent of $\ell$; they are computed using a fixed collection of metrics on jet spaces over the simplices of $K$. If in the above setting $K'=K$, we introduce the notation $s^\lin_\ell$ for $s'_\ell$.

Since being a piecewise embedding is an open condition for the $C^1$-topology, the above result implies in particular the following:

\begin{corollary} \label{lem:linearizationEmbedding}
	Let $f: |K| \to \R^n$ be a piecewise embedding of a finite simplicial complex $K$. For $\ell$ large enough, the map $f^\lin_\ell$ is also a piecewise embedding.
\end{corollary}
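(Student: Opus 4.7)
The plan is to combine, in essentially one step, the $C^1$-approximation of \cref{cor:linSecDist} with the openness statement on piecewise embeddings quoted immediately before the corollary. First I would apply \cref{cor:linSecDist} to $s = f$ with $K' = K$ and any chosen neighborhood $\Op(|K|)$. The conclusion, for all sufficiently large $\ell$, is a map $s'_\ell : |K| \to \R^n$ that is piecewise linear with respect to $K_\ell$ on the whole of $|K|$ and satisfies $\dist{1}(s'_\ell, f) = O(2^{-\ell})$. Since a piecewise linear map is determined by its values on vertices, this $s'_\ell$ coincides with the linearization $f^\lin_\ell$ (this is exactly the notational convention adopted after the proposition), so we obtain $f^\lin_\ell \to f$ in the $C^1$-topology on $\MapsPS(|K|, \R^n)$.

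Second, I would invoke the fact that the subset of piecewise embeddings is open in this $C^1$-topology. Since $f$ is a piecewise embedding, it lies in that open set, and the $C^1$-convergence $f^\lin_\ell \to f$ then forces $f^\lin_\ell$ to lie in the same open set for all $\ell$ large enough, yielding the statement.

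The only nontrivial ingredient is the cited $C^1$-openness of piecewise embeddings, which the excerpt asserts without proof and which is where the main obstacle (such as it is) would lie. If one wished to verify it directly, the argument splits into two standard pieces: (i) on each top-dimensional simplex $\Delta$, being a smooth embedding is $C^1$-open, combining the openness of the immersion condition with a compactness argument that promotes a $C^1$-close immersion of $\Delta$ to an embedding; and (ii) global injectivity on the compact polyhedron $|K|$ is $C^0$-open, by using the uniform positive lower bound on $|f(x)-f(y)|$ taken over pairs of points $x, y \in |K|$ not identified by the piecewise structure. Because $|K|$ is compact and $f$ is a piecewise embedding, this infimum is strictly positive, and any map sufficiently $C^0$-close to $f$ inherits the same disjointness. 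None of this bookkeeping involves surprises, and once it is in place the corollary is immediate from the two bullet points above.
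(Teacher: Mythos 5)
Your argument is the same as the paper's: apply \cref{cor:linSecDist} with $K'=K$ to get $C^1$-convergence $f^\lin_\ell \to f$, then invoke the $C^1$-openness of the set of piecewise embeddings, exactly as the sentence preceding the corollary indicates. Your extra sketch of why piecewise embeddings are $C^1$-open is correct but goes slightly beyond what the paper spells out; the core route is identical.
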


\subsection{Modifying maps on a face} \label{ssec:relativeLinearization}

As explained above, we linearize in order to work locally in a linear manner. This means that we need to explain how our local arguments globalize. To this end, we introduce an interpolation procedure between sections.
\begin{definition} \label{def:interpol}
Let $\Delta$ be a linear simplex given as the join of two opposing faces $A$ and $B$. Let $t: \Delta \to [0,1]$ be the join parameter: the affine function that is $0$ over $A$ and $1$ over $B$. Suppose that $s_A,s_B : \Delta \to \R^n$ are two smooth maps. We define the \textbf{interpolation} of $s_A$ and $s_B$ over $\Delta$ with respect to $A$ and $B$, denoted as $\interpolate{\Delta}{A,B}{s_A}{s_B} : \Delta \to \R^n$, to be the map $x \mapsto t(x) s_A(x) + (1-t(x)) s_B(x)$.
\end{definition}

The result of interpolation between sections is controlled by the shape of the simplex and the difference between the two sections that serve as input:
\begin{lemma}\label{lem:interpol}
Fix a linear $m$-simplex $\Delta$ spanned by two opposing faces $A, B$. Consider moreover three maps $u_1,u_2: A \rightarrow \R^n$, $v: B \to \R^n$, and the corresponding interpolations $s_i = \interpolate{\Delta}{A,B}{u_i}{v}$. Then the following bounds hold:
\begin{itemize}
	\item $\dist{0}(s_1,s_2) = O(\dist0 (u_1,u_2))$, and
	\item $\dist{1}(s_1,s_2) = \dist{0}(s_1,s_2) + O\left(\frac{\dist0 (u_1,u_2)}{\rmin(\Delta)} + \dist1 (u_1,u_2) \right)$.
\end{itemize}
\end{lemma}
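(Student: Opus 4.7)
The plan is to reduce the lemma to a single cancellation: the term involving $v$ is the same for $s_1$ and $s_2$, so the difference $s_1 - s_2$ depends only on $u_1 - u_2$. Concretely, one extends $u_i$ and $v$ from $A,B$ to all of $\Delta$ via the join projections $\pi_A\colon \Delta \setminus B \to A$ and $\pi_B\colon \Delta \setminus A \to B$, which decompose $x = (1-t)a + tb$ into its $A$- and $B$-components. The interpolation then takes the form $s_i(x) = (1-t(x))\tilde u_i(x) + t(x)\tilde v(x)$, where $t$ is the join parameter (vanishing on $A$, equal to $1$ on $B$) and $\tilde u_i = u_i \circ \pi_A$, $\tilde v = v \circ \pi_B$. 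Subtracting cancels the $\tilde v$-terms:
\[
  s_1 - s_2 = (1-t)(\tilde u_1 - \tilde u_2).
\]

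From here the $C^0$ estimate is immediate from $0 \le 1-t \le 1$ and $|\tilde u_1 - \tilde u_2|(x) = |u_1 - u_2|(\pi_A(x))$. For the $C^1$ estimate I would differentiate and treat the two resulting summands separately. The first, $dt \cdot (\tilde u_1 - \tilde u_2)$, is controlled by $|dt|$; writing $t$ as the sum of those barycentric coordinates of $\Delta$ attached to vertices of $B$, and recalling that the gradient of the $i$th barycentric coordinate has magnitude $1/d(p_i,\aspan(\text{opposite face})) \le 1/\rmin(\Delta)$, one gets $|dt| = O(1/\rmin(\Delta))$, which is what produces the $\dist{0}(u_1,u_2)/\rmin(\Delta)$ term in the statement. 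The second summand, $(1-t)\, d(\tilde u_1 - \tilde u_2)$, needs a little more care since $\pi_A$ is singular at $B$; a short computation in barycentric coordinates (differentiating $a = (x-tb)/(1-t)$) gives $|d\pi_A| = O(1/(1-t))$, and the prefactor $(1-t)$ precisely absorbs this blow-up, so the term is bounded by $O(\dist{1}(u_1,u_2))$ with an implicit constant depending only on $\rmax(\Delta)$. Combining the two bounds with the $C^0$ estimate yields the stated $C^1$ inequality.

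The main obstacle I anticipate is exactly the cancellation between the singular extension $\tilde u_i$ at $B$ and the vanishing prefactor $(1-t)$: individually, neither $\tilde u_i$ nor $d\tilde u_i$ is bounded on $\Delta$, and one must verify carefully that $(1-t)\, d\tilde u_i$ really does give a smooth, uniformly bounded quantity up to $B$. Once this is in place, the remaining computations are routine, resting only on the standard $1/\rmin$-type gradient bound for barycentric coordinates and the triangle inequality.
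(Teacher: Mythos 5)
Your proof is correct and is the argument one expects here; the paper itself defers the proof to the companion paper \cite{FPTjiggling}, so there is no in-paper proof to compare against, but the decomposition $s_1 - s_2 = (1-t)(\tilde u_1 - \tilde u_2)$ followed by Leibniz and the two separate bounds is exactly the natural route. Two small remarks. First, note that \cref{def:interpol}, read literally, gives $t\,s_A + (1-t)\,s_B$ with $t$ vanishing on $A$, which when fed $s_A=u_i$, $s_B=v$ produces $t\,\tilde u_i + (1-t)\,\tilde v$; that convention would leave the divergent factor $d\tilde u_i\sim 1/(1-t)$ near $B$ uncancelled. The version you use, $(1-t)\,\tilde u_i + t\,\tilde v$, is the one for which $s_i|_A = u_i$ and $s_i|_B = v$ as intended, and also the one for which the vanishing prefactor kills the blow-up of $d\pi_A$; you silently corrected what appears to be a typographical swap, and this is the right reading. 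Second, your claim that $(1-t)\,|d\pi_A|$ is bounded ``with an implicit constant depending only on $\rmax(\Delta)$'' is slightly off: writing $a=\pi_A(x)$ in barycentric coordinates gives
\[
(1-t)\,da \;=\; \sum_{p_i\in A} d\lambda_i\,(p_i - a),
\]
so the bound is $O\!\left(\rmax(\Delta)/\rmin(\Delta)\right)$, i.e.\ the constant depends on the \emph{shape} (degeneracy) of $\Delta$, not only on its diameter. Since the lemma fixes $\Delta$ and the $O$-notation absorbs $\Delta$-dependent constants, this does not affect the validity of the conclusion; it only matters if one wanted the constant to be scale- and shape-independent. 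The point you flagged as the main obstacle — that $(1-t)\,d\tilde u_i$ must be uniformly bounded up to $B$ — is indeed the crux, and the display above is exactly the computation that settles it.
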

Here we can use the usual Euclidean $C^0$ and $C^1$-distances, although any other choices are equivalent up to a constant.

\subsubsection{The linear case}
We note that when we interpolate sections that are linear over a simplex $\Delta$, the result is in general not linear. Hence we define the following variation of \cref{def:interpol}, which only depends on the sections restricted to the faces.

\begin{definition} 
    Let $\Delta$ be a linear simplex given as the join of two opposing faces $A$ and $B$. Suppose that $s_A: A \to \R^n$ and $s_B : B \to \R^n$ are two affine maps. We define the \textbf{join} of $s_A$ and $s_B$ over $\Delta$, denoted as $\joins{\Delta}{A,B}{s_A}{s_B}$, to be the unique affine map $\Delta \to \R^n$ agreeing with $s_A$ over $A$ and agreeing with $s_B$ over $B$.
\end{definition}
For the join we now obtain slightly simpler bounds when perturbing, than for the interpolation in \cref{lem:interpol}: 
\begin{lemma}\label{lem:join}
Fix a linear $m$-simplex $\Delta$ spanned by two opposing faces $A, B$. Consider moreover the linear maps $v: A \to \R^n$ and $u_1,u_2: B \rightarrow \R^n$, and the corresponding joins $s_i = \joins{\Delta}{A,B}{v}{u_i}$. Then the following bounds hold:
\begin{itemize}
	\item $\dist{0}(s_1,s_2) = O(\dist0 (u_1,u_2))$, and
	\item $\dist{1}(s_1,s_2) = \dist{0}(s_1,s_2) + O \left( \dist0 (u_1,u_2) \cdot \maxcoeff(\Delta) \right)$.
\end{itemize}
\end{lemma}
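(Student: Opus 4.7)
The plan is to analyze the difference $w := s_1 - s_2 : \Delta \to \R^n$, which is an affine map that vanishes on $A$ (both $s_i$ agree with $v$ there) and restricts to $u_1 - u_2$ on $B$. Both bullets then reduce to controlling $w$ and its derivative in terms of $\dist{0}(u_1,u_2)$.

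For the $C^0$ bound, I would use the join decomposition $\Delta = \join{A}{B}$: any $x \in \Delta$ can be written as $x = t a + (1-t) b$ with $a \in A$, $b \in B$ and $t \in [0,1]$. Since affine maps respect affine combinations,
\[
 w(x) = t\, w(a) + (1-t)\, w(b) = (1-t)(u_1 - u_2)(b),
\]
so $|w(x)| \leq \dist{0}(u_1, u_2)$, giving the first bullet.

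For the $C^1$ bound, I would fix an ordering of the vertices of $\Delta$ with $v_0 \in A$ (allowed since $A$ is non-empty and, as noted after \cref{def:maxcoeff}, $\maxcoeff$ is invariant under permutations fixing the first vertex) and set $e_i := v_i - v_0$. Because $w$ is affine and $w(v_0)=0$, its constant linear part $L := Dw$ is determined by $L(e_i) = w(v_i)$, which is $0$ when $v_i \in A$ and equals $(u_1-u_2)(v_i)$ when $v_i \in B$. For any unit tangent vector $\xi = \sum_{i=1}^{m} \lambda_i e_i$, the very definition of $\maxcoeff$ yields $|\lambda_i| \leq \maxcoeff(\Delta)$ for each $i$, hence
\[
 |L(\xi)| = \Bigl|\sum_{v_i \in B} \lambda_i (u_1 - u_2)(v_i)\Bigr| \leq (m+1)\, \maxcoeff(\Delta)\, \dist{0}(u_1, u_2).
\]
Since $m = \dim \Delta$ is fixed, this is $O(\dist{0}(u_1,u_2) \cdot \maxcoeff(\Delta))$; adding the $C^0$ estimate gives the second bullet.

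I do not expect any substantial obstacle. The crucial input is that joining against the same $v$ over $A$ forces the difference of the two joins to vanish on the whole affine span of $A$, which in turn pins its linear part entirely to the values $(u_1-u_2)(v_i)$ on $B$ expressed in the basis $\{e_i\}$. The constant $\maxcoeff(\Delta)$ is, by construction, exactly the quantity controlling such expansions, so the estimate essentially drops out of the definitions — the only care needed is to place $v_0$ in $A$ so that the vanishing of $w$ at $v_0$ can be leveraged.
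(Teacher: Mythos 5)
Your argument is correct and, as far as I can tell, the natural one; note that the paper itself does not give a proof of \cref{lem:join} (it defers to \cite{FPTjiggling} for Sections~\ref{sec:prelim}--\ref{sec:linearizing}), so there is no in-paper proof to compare against. The key step is exactly right: $w=s_1-s_2$ is affine, vanishes on $A$ (hence on the affine span of $A$), and equals $u_1-u_2$ on $B$, so the join parametrization $x=ta+(1-t)b$ gives $\dist0(s_1,s_2)\le\dist0(u_1,u_2)$, and the linear part $L=Dw$ is determined by $L(e_i)=w(v_i)$, which is then bounded using the defining property of $\maxcoeff$.

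One justification is slightly off, though the conclusion stands. You invoke the invariance of $\maxcoeff$ under permutations \emph{fixing the first vertex} to claim you may take $v_0\in A$, but that observation does not let you change the first vertex; it only says the remaining vertices may be reordered freely. What you actually need is that if the given ordering of $\Delta$ has $v_0\in B$, you can still work in the basis $\{v_i-v_{0}'\}$ for some $v_0'\in A$: expressing $\xi=\sum\lambda_i(v_i-v_0)$ in that basis changes each coefficient's absolute value by at most a factor of $m$, so the two versions of $\maxcoeff$ are comparable up to the dimension-dependent constant already absorbed in the $O(\cdot)$. (Also, the count in your display should be $m$ rather than $m+1$ since $i$ ranges over $1,\dots,m$ — harmless for the $O$-estimate.) With that cleanup the proof is complete.
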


The above provides us in particular with bounds on the distance between a piecewise linear map, and the result of perturbing that map in each vertex.
\begin{lemma} \label{lem:perturbVert}
	Let $f: |K| \to \R^n$ be a piecewise linear map with respect to a finite simplicial complex $K$ and write $\maxcoeff=\max_{\Delta \in K}\maxcoeff(\Delta)$. Let $f': |K|\to\R^n$ be a piecewise linear map, obtained by perturbing $f$ in each vertex by at most $\epsilon>0$. Then the following bounds hold:
	\begin{itemize}
		\item $\dist0 (f,f') = O(\epsilon)$, and
		\item $\dist1 (f,f') = O(\epsilon(1+\maxcoeff))$.
	\end{itemize}
\end{lemma}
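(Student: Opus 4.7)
The plan is to reduce everything to an inductive application of \cref{lem:join}, done one vertex at a time, working simplex by simplex. Since both the $C^0$ and $C^1$ distances on piecewise maps are taken as a maximum over top simplices of $K$ (and there are finitely many of these), it suffices to fix a top simplex $\Delta \in K$ with ordered vertices $v_0,\dots,v_m$ and prove the two estimates for the restrictions $f|_\Delta$ and $f'|_\Delta$.

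For such a $\Delta$, I would introduce the chain of affine maps $f|_\Delta = f_0,f_1,\dots,f_m = f'|_\Delta$ where $f_k:\Delta\to \R^n$ is the unique affine map that agrees with $f'$ on the vertices $v_0,\dots,v_{k-1}$ and with $f$ on $v_k,\dots,v_m$. The consecutive maps $f_k$ and $f_{k+1}$ differ only at the single vertex $v_{k+1}$, so they coincide on the opposite face $\partial_{k+1}$. Writing $\Delta = \join{\partial_{k+1}}{\{v_{k+1}\}}$ and taking $A=\partial_{k+1}$, $B=\{v_{k+1}\}$, both $f_k$ and $f_{k+1}$ are realized as joins in the sense of the paragraph preceding \cref{lem:join}: they have the same restriction $v := f_k|_{\partial_{k+1}} = f_{k+1}|_{\partial_{k+1}}$ to $A$, while their values on $B$ are the constants $u_1 = f(v_{k+1})$ and $u_2 = f'(v_{k+1})$, which satisfy $|u_1-u_2|\leq \epsilon$ by hypothesis. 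Since $u_1,u_2$ are constants on a point, both $\dist0(u_1,u_2)$ and $\dist1(u_1,u_2)$ are at most $\epsilon$, so \cref{lem:join} yields
\[
\dist0(f_k,f_{k+1}) = O(\epsilon), \qquad \dist1(f_k,f_{k+1}) = O\bigl(\epsilon(1+\maxcoeff(\Delta))\bigr).
\]

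The triangle inequality then gives $\dist0(f|_\Delta,f'|_\Delta) \leq \sum_k \dist0(f_k,f_{k+1}) = O(m\epsilon) = O(\epsilon)$, and similarly $\dist1(f|_\Delta,f'|_\Delta) = O\bigl(m\epsilon(1+\maxcoeff(\Delta))\bigr) = O\bigl(\epsilon(1+\maxcoeff)\bigr)$, since $m \leq \dim K$ and $\maxcoeff(\Delta)\leq \maxcoeff$ are bounded by constants depending only on $K$. Taking the maximum over the finitely many top simplices yields the global bounds. The only minor bookkeeping issue is that $\maxcoeff(\Delta)$ depends on the chosen ordering of $\Delta$, but we simply fix once and for all the same ordering that is used to define $\maxcoeff$; I do not anticipate this being a genuine obstacle, as the argument is essentially a direct induction on vertex perturbations combined with the join estimate already established.
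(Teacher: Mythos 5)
Your proof is correct and takes essentially the same approach as the paper: both reduce the estimate to iterated applications of \cref{lem:join}, working on a top simplex $\Delta$ and accumulating the per-step bounds. The only cosmetic difference is that the paper's proof perturbs in two batches (the vertices of a face $A$, then of its opposing face $B$, invoking \cref{lem:join} twice), whereas you perturb one vertex at a time; since the number of steps is bounded by $\dim K + 1$, both yield the stated $O(\epsilon)$ and $O(\epsilon(1+\maxcoeff))$ bounds.
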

\begin{proof}
	Let $\Delta\in K$ be a top-dimensional simplex and write it as the join of two faces $A$ and $B$. Then we first perturb $f$ in the vertices of $A$ by at most $\epsilon$, which yields a function $f''$. We bound the difference between $f$ and $f''$ by applying \cref{lem:join}. Next we perturb $f''$ in the vertices of $B$ by at most $\epsilon$, after which the claim follows from again applying \cref{lem:join}.
\end{proof}

When the piecewise linear map we consider is the linearization of a map, we obtain the following. 
\begin{corollary} \label{cor:perturbLin}
	Let $f: |K| \to \R^n$ be a piecewise embedding of a finite simplicial complex $K$. There exists $\epsilon>0$, such that for all $\ell$ large enough, if we perturb all vertices of $(f^\lin_\ell,K_\ell)$ by at most $\epsilon/2^\ell$, the resulting piecewise linear map $f'_\ell$ is a piecewise embedding and satisfies 
	\begin{itemize}
		\item $\dist0 (f,f'_\ell) = O(2^{-2\ell}+\epsilon/2^\ell)$, and
		\item $\dist1 (f,f'_\ell) = O(2^{-\ell}+\epsilon)$.
	\end{itemize} 
\end{corollary}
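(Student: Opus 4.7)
The plan is to apply the triangle inequality, routing through the linearization $f^\lin_\ell$. By \cref{cor:linSecDist} applied with $K'=K$ we already have $\dist{0}(f,f^\lin_\ell) = O(2^{-2\ell})$ and $\dist{1}(f,f^\lin_\ell) = O(2^{-\ell})$. The map $f'_\ell$ is by hypothesis obtained from $f^\lin_\ell$ by perturbing the vertices of $K_\ell$ by at most $\epsilon/2^\ell$, so I would control $\dist{0}(f^\lin_\ell, f'_\ell)$ and $\dist{1}(f^\lin_\ell, f'_\ell)$ via \cref{lem:perturbVert} and then sum the two estimates.

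The crucial quantitative input is the interaction between the perturbation size $\epsilon/2^\ell$ and the quantity $\maxcoeff(K_\ell)$. \cref{lem:perturbVert} gives
\[
\dist{0}(f^\lin_\ell, f'_\ell) = O(\epsilon/2^\ell), \qquad \dist{1}(f^\lin_\ell, f'_\ell) = O\bigl((\epsilon/2^\ell)(1+\maxcoeff(K_\ell))\bigr),
\]
and by \cref{lem:rmaxminBound} we have $\maxcoeff(K_\ell) \leq D\cdot 2^\ell$ for some constant $D$ independent of $\ell$. Hence $(\epsilon/2^\ell)\cdot\maxcoeff(K_\ell) = O(\epsilon)$, and the $C^1$-bound above simplifies to $O(\epsilon)$. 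Here I would stress that the implicit constants in \cref{lem:perturbVert}, inherited from \cref{lem:join}, depend only on the vertex perturbation and $\maxcoeff(\Delta)$ applied simplex by simplex, so they are genuinely independent of the level $\ell$ of subdivision. Combining with the triangle inequality yields the two estimates in the statement.

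For the piecewise embedding claim, I would use that the set of piecewise embeddings $|K|\to\R^n$ is open in the $C^1$-topology (the same fact used in \cref{lem:linearizationEmbedding}). Fix $\delta>0$ such that every piecewise linear map $g$ with $\dist{1}(f,g)<\delta$ is a piecewise embedding. Choose $\epsilon>0$ so that the constant hidden in the $O(\epsilon)$ term above is strictly less than $\delta/2$, and then choose $\ell$ large enough so that the $O(2^{-\ell})$ term is also strictly less than $\delta/2$; then $f'_\ell$ lies inside the $\delta$-neighborhood of $f$ and is thus a piecewise embedding.

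The only real subtlety, and where I would be careful, is checking that the constants implicit in the $O$-notation in \cref{lem:perturbVert} do not themselves secretly depend on the subdivision level $\ell$. This is why it matters that \cref{lem:join} gives constants that depend only on $\maxcoeff(\Delta)$ (explicitly pulled out) and on absolute quantities like the ambient dimension; once that is verified, the argument is a straightforward bookkeeping exercise and there is no deeper obstacle.
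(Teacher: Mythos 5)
Your proof is correct and follows essentially the same route as the paper: triangle inequality through $f^\lin_\ell$, with \cref{cor:linSecDist} controlling $d(f,f^\lin_\ell)$, \cref{lem:perturbVert} together with the bound $\max_{\Delta\in K_\ell}\maxcoeff(\Delta)=D\cdot 2^\ell$ from \cref{lem:rmaxminBound} controlling $d(f^\lin_\ell,f'_\ell)$, and openness of piecewise embeddings in the $C^1$-topology for the final claim. Your attention to the $\ell$-independence of the implicit constants is exactly the right point to be careful about, and is handled just as the paper intends.
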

\begin{proof}	
	We first establish the bounds on the distances. We bound the $C^0$ and $C^1$-distances between $f$ and $f^\lin_\ell$ as respectively $O(2^{-2\ell})$ and $O(2^{-\ell})$ using \cref{cor:linSecDist}. Next, we bound the $C^0$ and $C^1$-distances between $f^\lin_\ell$ and $f'_\ell$ as $O(\epsilon/2^\ell)$ and $O(\epsilon)$ using \cref{lem:perturbVert,lem:rmaxminBound}, where we recall that \cref{lem:rmaxminBound} bounds $\max_{\Delta \in K}\maxcoeff(\Delta)$. This provides us with the bounds from the statement. 
	
	If we now choose $\epsilon$ small enough then for all $\ell$ large enough, we obtain, as in \cref{lem:linearizationEmbedding}, that $f'_\ell$ is a piecewise embedding.
\end{proof}


\section{Transversality} \label{sec:prelimTransv}
In this section we discuss various notions of transversality with respect to a distribution. Here we understand by a distribution on a manifold $N$ a subbundle of $TN$ of constant rank. In \cref{sec:transSimpl} we first consider simplices. Our convention is to only consider non-degenerate $d$-simplices. We generalize to piecewise smooth embeddings in \cref{sec:transMaps}. In \cref{sec:Gr} we state some foundational facts about the Grassmannian.

\subsection{Transversality of simplices}\label{sec:transSimpl}
We introduce the notation \[+: \Gr(n,\ell_1) \times \Gr(n,\ell_2) \to \sqcup_{i=\max\{\ell_1,\ell_2\}}^{n} \Gr(n,i)\] for the operation which associates to an $\ell_1$-plane and $\ell_2$-plane their span. Using this, we now define when two planes are transverse.
\begin{definition}\label{def:TransPlanes}
	Two planes $V \in \Gr(n,v)$ and $W \in \Gr(n,w)$ are \textbf{transverse} if 
	\[\dim (V + W) = \min \{v+w,n\}, \]
	which we denote by $V \trans W$.
\end{definition}

When we identify the planes $V$ and $W$ with two affine isomorphisms $\R^v \to V$ and $\R^w \to W$, we see that the above definition of transversality differs from the notion of transversality of maps. In the latter case we would require that $\dim (V + W) = n$. 
The reason for this difference lies in the observation that affine subspaces, when their dimensions are small enough, will generically avoid one another. The same holds true for submanifolds. In contrast, linear subspaces always intersect each other, just as is the case for distributions and submanifolds. Hence we can only ask for this intersection to be minimal, which is what we implement in \cref{def:TransPlanes}. We will always use this notion of transversality when referring to the transversality of planes.

We first define transversality in the case where the simplex is linear and the distribution is a constant foliation by reducing to \cref{def:TransPlanes}.
\begin{notation}
	To a linear $d$-simplex $\Delta$ in $\R^n$ we associate a point $\Gr(\Delta)$ in the Grassmannian $\Gr(n,d)$. Given $V \in \Gr(n,k)$, we also introduce the notation $\folconst{V}$ for the constant foliation on $\R^n$ whose leaves are just translated copies of $V$. 
\end{notation}
\begin{definition}
	A linear $d$-simplex $\Delta$ in $\R^n$ is \textbf{transverse} to a constant foliation $\folconst{V}$ on $\R^n$ if $\Gr(\Delta) \trans V$.
\end{definition}

To deal with $d$-simplices $\Delta$ that are not necessarily linear, and distributions $\xi$ that are not necessarily constant, we identify the tangent space $T_x \Delta$ with a point in the Grassmannian $\Gr(n,d)$.
\begin{definition}
	A simplex $\Delta$ in $\R^n$ is \textbf{transverse} to a distribution $\xi$ on $\R^n$ if for all $x\in \Delta$ we have $T_x\Delta \trans \xi_x$.
\end{definition}

\subsubsection{Quantitative transversality}\label{sec:QuantTrans}
The above definitions are only concerned with the question whether a simplex is transverse to a distribution, which is a qualitative notion. We can also wonder how to quantify the amount of transversality. A natural way of doing so is to consider how much either the simplex or the distribution, or both, can be perturbed while remaining transverse. In the context of jiggling (\cref{sec:PerturbVertex,sec:Tjiggling}), we will consider the distribution as a given, and the simplex as something to be controlled, and hence we choose to consider only perturbations of the simplex and not of the distribution in the following definition. This leads us to the following quantitative notion, which depends on a choice of metric on the relevant Grassmannian.

\begin{definition} \label{def:transPiecewise}
	Let $\epsilon>0$. A $d$-simplex $\Delta$ in $\R^n$ is \textbf{$\epsilon$-transverse} to a distribution $\xi$ on $\R^n$ if for all $x\in \Delta$ and $D \in B(\Delta_x, \epsilon) \subset \Gr(n,d)$ we have $D \trans \xi_x$.
\end{definition}

If a simplex is transverse and remains so under simultaneous $\zeta$-perturbations of each of its vertices, the simplex is $\epsilon$-transverse, which we will make precise in \cref{lem:PerturbVertToNbhGr}. Inspired by this, we introduce the following weaker notion of (quantitative) transversality in a vertex, by considering how much this single vertex can be perturbed while remaining transverse.

\begin{definition} \label{def:semitrans}
	Let $\delta>0$ be given, together with a point $p\in \R^n$ and a linear simplex $\Delta \in \R^n$. The linear simplex $\join{p}{\Delta}$ in $\R^n$ is \textbf{$\delta$-semitransverse} in $p$ to a distribution $\xi$ on $\R^n$ if all simplices $\join{p'}{\Delta}$ with $p' \in B(p,\delta)$ are non-degenerate and transverse to $\xi$.
\end{definition}

An important difference between $\epsilon$-transversality and $\delta$-semitransversality is the following: the $\epsilon$-transversality of a simplex $\Delta$ with respect to a constant foliation $\folconst{V}$ only depends on $\Gr(\Delta)$. Hence, the simplex $\Delta$ remains $\epsilon$-transverse when we perturb its vertices within $\aspan(\Delta)$ as long as we stay non-degenerate. Additionally, we can scale the simplex $\Delta$ and the result will also be $\epsilon$-transverse. This is not the case for the $\delta$-semitransversality of $\Delta$: if we scale $\Delta$ by a factor $L$, the resulting simplex will be $L\delta$-semitransverse.

A related observation is that $\epsilon$-transversality does not control the degeneracy of the simplex, whereas this is the case for $\delta$-semitransversality. Specifically, the latter tells us that the vertex is further than $\delta$ from the opposite face. Hence when dealing with $\delta$-semitransversality (as in for instance \cref{lem:semitransDiffSimplex,lem:semitransDiffFol}), we need to take the ``shape'' or ``size'' of the simplex into account.

\subsubsection{Qualitative transversality}
We now consider two stronger notions of transversality, in the sense that they also take the subsimplices of a simplex into account: stratified transversality and being in general position. For the former, we interpret a simplex as a space stratified by its subsimplices, which gives rise to the following.
\begin{definition}
	A simplex $\Delta$ in $\R^n$ is \textbf{stratified transverse} to a distribution $\xi$ on $\R^n$ if each of its subsimplices is transverse to $\xi$.
\end{definition}

The notion of general position is even stronger: we compare the directions of the subsimplices and distribution in different points in $\R^n$. In particular, this puts a restriction on how much the distribution can vary within a simplex.
\begin{definition} \label{def:ThGenPos}
	A simplex $\Delta$ in $\R^n$ is \textbf{in general position} with respect to a distribution $\xi$ on $\R^n$ if for each subsimplex $\Delta'$ of $\Delta$ (including $\Delta$ itself) and for all $x\in\Delta$ we have that $\Delta'$ is transverse to $\folconst{\xi_x}$.
\end{definition}

\subsubsection{Transversality for simplicial complexes}
The generalization from the above notions of transversality for simplices to simplicial complexes is straightforward, by requiring that every top-dimensional simplex in the simplicial complex satisfies the notion. We recall that we introduced the notation $K^{(\topd)}$ for the set of top-dimensional simplices of a simplicial complex $K$. 
\begin{definition} \label{def:transSimplComplex}
	Let $K$ be a simplicial complex, then $K$ is \textbf{($\epsilon$-/stratified) transverse} or \textbf{in general position} if each $\Delta \in K^{(\topd)}$ is ($\epsilon$-/stratified) transverse or in general position.
\end{definition}
Semitransversality can be generalized to simplicial complexes by specifying for each simplex the corresponding vertex which can be perturbed. We leave this to the reader, but note that because of this dependence on the vertex it does not make sense to consider whether semitransversality is preserved under subdivision. For the four notions of transversality in \cref{def:transSimplComplex} this does make sense, but not all are preserved under subdivision of the simplex. From their definition, we observe the following:
\begin{lemma}
	If a simplicial complex $K$ is ($\epsilon$-)transverse, then so are its subdivisions.
\end{lemma}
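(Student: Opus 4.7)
The plan is to reduce the statement to a single elementary observation about linear simplices: a linear $d$-subsimplex of a linear $d$-simplex has the same affine span, and hence the same tangent plane at every interior point. Once this is in hand, both the transverse and the $\epsilon$-transverse cases become immediate bookkeeping.

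First I would fix an arbitrary top-dimensional simplex $\Delta' \in K'^{(\topd)}$. Since $K'$ subdivides $K$, the underlying polyhedron $|K| = |K'|$ is unchanged, and so the top dimensions of $K$ and $K'$ coincide. By definition of subdivision, $\Delta'$ is contained in some simplex $\Delta \in K$, and comparing dimensions forces $\Delta \in K^{(\topd)}$ with $\dim\Delta = \dim\Delta' =: d$. Being a linear $d$-subsimplex of the linear $d$-simplex $\Delta$, we get $\aspan(\Delta') = \aspan(\Delta)$, so for every $x \in \Delta'$ we have $T_x\Delta' = T_x\Delta$, and in particular $\Gr(\Delta') = \Gr(\Delta)$ as points of $\Gr(n,d)$.

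From this identification the two conclusions fall out directly. For the plain case, the transversality condition $T_x\Delta' \trans \xi_x$ at $x \in \Delta'$ is literally the condition $T_x\Delta \trans \xi_x$, which holds because $\Delta$ is transverse by hypothesis on $K$. For the $\epsilon$-transverse case, the balls $B(T_x\Delta', \epsilon)$ and $B(T_x\Delta, \epsilon)$ in $\Gr(n,d)$ are the same ball, so every $D$ in it is transverse to $\xi_x$ by the $\epsilon$-transversality of $\Delta$. Since $\Delta' \in K'^{(\topd)}$ was arbitrary, $K'$ is ($\epsilon$-)transverse.

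I do not expect a genuine obstacle: the whole substance is the tangent-plane identification in the second paragraph, and the only moment requiring a bit of attention is verifying that the top-dimensional simplex of $K'$ is enclosed by a top-dimensional simplex of $K$ of the same dimension. This is simply because a subdivision preserves the maximum simplex dimension, as noted above.
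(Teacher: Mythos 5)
Your proposal is correct and amounts to the observation the paper itself relies on: the paper offers no proof (it writes ``From their definition, we observe the following'') precisely because the content is exactly the tangent-plane identification you isolate. A top-dimensional simplex $\Delta'$ of a subdivision sits inside a top-dimensional simplex $\Delta$ of $K$ of the same dimension, so $\Gr(\Delta') = \Gr(\Delta)$ and $\Delta' \subset \Delta$; the (plain or $\epsilon$-) transversality condition on $\Delta$ over all of $\Delta$ then restricts to give the same condition on $\Delta'$ over $\Delta'$. Your proof is a fine unpacking of this.
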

On the other hand, if a simplicial complex is stratified transverse or in general position, this is generally not preserved under subdivision. We give an example in \cref{fig:transvNotSubdiv}.

\begin{figure}[h]
	\includegraphics[width=0.4\textwidth,page=9]{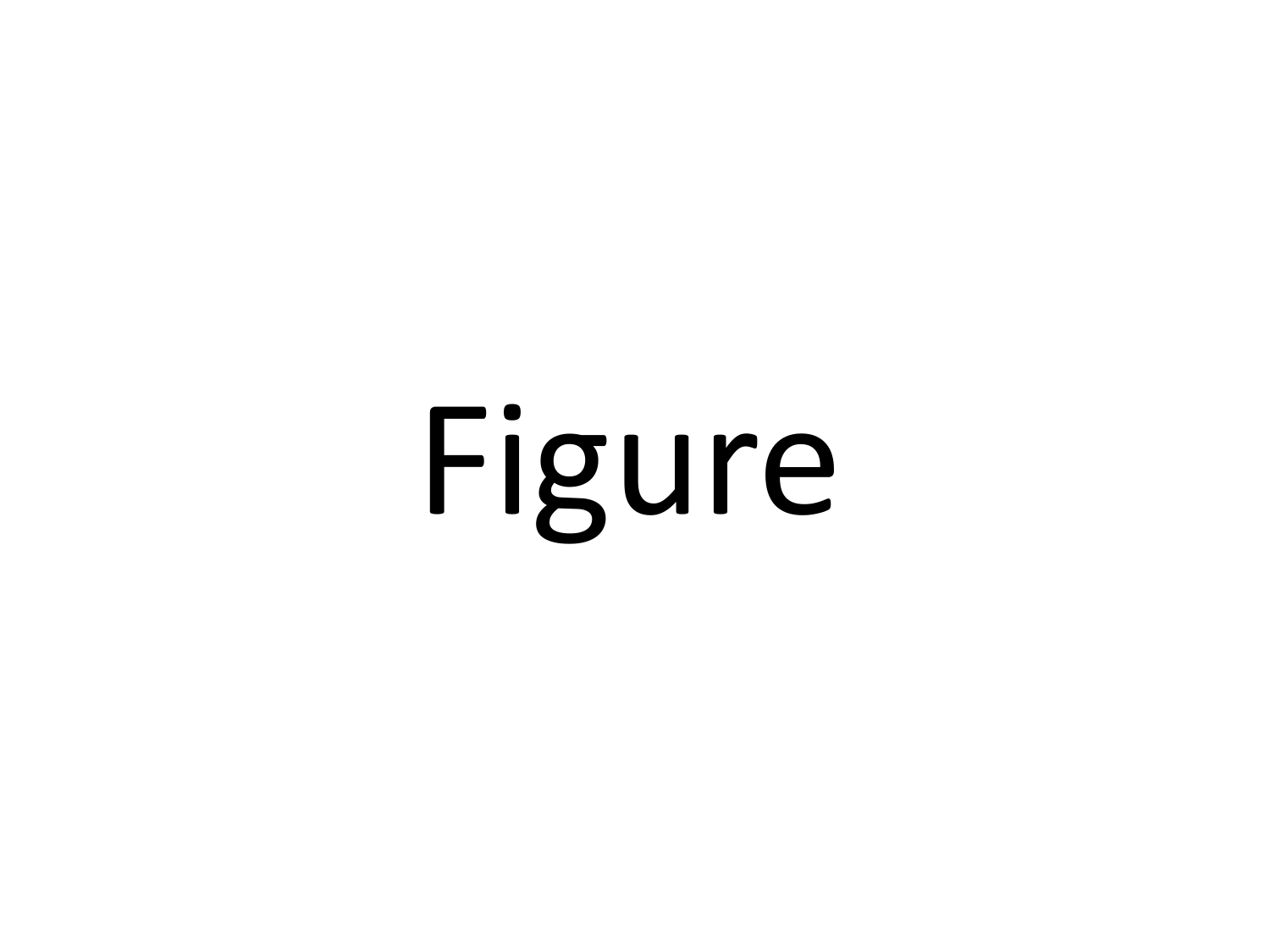}
	\centering
	\caption{An example of a linear $2$-simplex $\Delta$ in $\R^2$ which is (stratified) transverse and in general position with respect to a distribution $\fol$, but whose subdivision $K'$ is only transverse.} \label{fig:transvNotSubdiv}
\end{figure}

\subsection{Transversality of piecewise smooth maps}\label{sec:transMaps}
We now generalize all of the above notions to maps $f: M \to N$ between manifolds that are piecewise smooth embeddings with respect to a triangulation $T:|K| \to M$, and distributions $\xi$ on $N$. The idea is to consider the images of the simplices of $K$ under $f\circ T$ and determine whether these are transverse to $\xi$. Since we assume $f$ and $T$ to be piecewise smooth embeddings, these images are simplices themselves. Hence we will refer to them as the simplices of $(f,K)$ or $(f,T)$, depending on whether $f$ is smooth with respect to a simplicial complex $K$ or a triangulation $T$. We then check the transversality of a simplex of $(f,K)$ (or $(f,T)$) to $\xi$ in a point $x\in f(M)$ by identifying $T_{x}N$ with $\R^n$, where $n = \dim(N)$, and using \cref{def:TransPlanes}.
\begin{definition}
	Let $f:M\to N$ be a piecewise smooth embedding with respect to a triangulation $T: |K| \to M$ and let $\xi$ be a distribution on $N$. Then $f$ is \textbf{transverse} to $\xi$ if all top-dimensional simplices $\Delta$ of $(f,K)$ are transverse to $\xi$.
\end{definition}

Similarly, we check whether the simplices of $(f,K)$ (or $(f,T)$) are $\epsilon$- or stratified transverse by identifying with Euclidean space. We observe that $\epsilon$-transversality depends on a choice of metrics on each (Grassmannian of) $T_xN$. 

\begin{definition}
	Let $f:M\to N$ be a piecewise smooth embedding with respect to a triangulation $T: |K| \to M$ and let $\xi$ be a distribution on $N$. Then $f$ is \textbf{$\epsilon$-/stratified transverse} to $\xi$ if all top-dimensional simplices $\Delta$ of $(f,K)$ are $\epsilon$-/stratified transverse to $\xi$.
\end{definition}
In the case of stratified transversality, we also write that $(f,K)$ or $(f,T)$ is stratified transverse, to emphasize the underlying simplicial complex or triangulation of which we consider the simplices.

Being in general position is, however, not a local notion and hence we use the composition $f\circ T$ to pull back the distribution to Euclidean space.
\begin{definition} 
	Let $f: M \to N$ be a map that is piecewise smooth with respect to a triangulation $T: |K| \rightarrow M$ and let $\xi$ be a distribution on $N$. Then $(f,K)$ is \textbf{in general position} with respect to $\xi$ if:
	\begin{itemize}
		\item $f$ is transverse to $\xi$, and
		\item each top-dimensional simplex $\Delta \in K$ is in general position with respect to the distribution $(f \circ T)^*\xi$.
	\end{itemize}   
\end{definition}

\begin{remark}
	Since for semitransversality, we need to specify the vertex of a simplex in which it is semitransverse, we do not write that $(f,K)$ or $(f,T)$ is $\delta$-semitransverse. Instead, we refer to the semitransversality in $v_\Delta$ of the simplex $\Delta$ in $(f,K)$ or $(f,T)$ directly.
\end{remark}

\subsection{Grassmannian} \label{sec:Gr}
We end this section with some fundamental facts about the Grassmannian, since we will need these in \cref{sec:transEstimates}.

\subsubsection{Charts} \label{sec:GrCharts}
We recall that a chart $\phi: \SU_V \to \R^{d(n-d)}$ around a plane $V \in \Gr(n,k)$ can be constructed as follows. We let $\SU_V$ be such that for all $W \in \SU_V$, we can identify $W$ with the plane $\{x + T_W x \mid x\in V\} \subset \R^n$ where $T_W$ is a linear map $V \to V^\perp$. By choosing orthonormal bases of $V$ and $V^\perp$, we obtain a matrix $M_W$ which we interpret as an element of $\R^{k(n-k)}$. Hence the smooth chart $\phi$ is defined by \[W \mapsto M_W \in \R^{k(n-k)}.\] 

\subsubsection{Metrics}\label{sec:GrMetrics}

The Grassmannian can be endowed with various metrics. For our purposes the following metric will be convenient and will therefore be our default:
\begin{definition}\label{def:MetricGrProj}
	Let $V_1,V_2 \in \Gr(n,k)$ and denote the orthogonal projection $\R^n \to V_i \subset \R^n$ by the linear operator $P_{i}:\R^n \to \R^n$. We then define $\dproj(V_1,V_2) = \|P_1-P_2\|$, where the latter is just the operator norm for linear maps $\R^n \to \R^n$. We write $\Bproj(V,r)$ for the ball around a plane $V\in\Gr(n,k)$ of radius $r$ using the metric $\dproj$.
\end{definition}
The above metric induces the standard topology on the Grassmannian. We recall that, since the Grassmannian is compact, this metric is equivalent to any other metric, as long as it induces the standard topology.

Alternatively, we can locally define the following induced metric on the Grassmannian: We denote by $\LinMaps(V,V^\perp)$ the space of linear maps $V \to V^\perp$ endowed with the operator norm. As in \cref{sec:GrCharts}, we obtain a chart $\psi_V: \SU_V \to \LinMaps(V,V^\perp)$ which sends a plane $W \in \SU$ to $T_W \in \LinMaps(V,V^\perp)$. Pulling back the norm on $\LinMaps(V,V^\perp)$ to $\SU_V$ defines the metric $\dop{V}$ on $\SU_V$, which thus only depends on a choice of inner product on $\R^n$. 

Using the compactness of the Grassmannian we obtain the following.
\begin{lemma}\label{lem:GrMetricChartOpNorm}
	Let $n,k\in \N$ and $r\in\R_{>0}$. There exists $C_1,C_2>0$ such that for all planes $V \in \Gr(n,k)$ and $W_1,W_2\in \Bop{V}(V,r)$ the following holds
	\[ C_1 \dproj(W_1,W_2) < \dop{V}(W_1,W_2) < C_2 \dproj(W_1,W_2).\]
\end{lemma}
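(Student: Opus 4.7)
The strategy is to reduce to a fixed basepoint $V_0 \in \Gr(n,k)$ by the transitive action of $O(n)$ on $\Gr(n,k)$, and then prove the equivalence of the two metrics on a compact chart ball via a smoothness-and-compactness argument.

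First I would check that both metrics behave equivariantly under $O(n)$. For $g \in O(n)$ we have $P_{gW} = g P_W g^{-1}$, so conjugation-invariance of the operator norm yields $\dproj(gW_1, gW_2) = \dproj(W_1, W_2)$. If moreover $gV = V'$, then $g$ carries $\SU_V$ onto $\SU_{V'}$, and under the charts conjugates $T_W \in \LinMaps(V, V^\perp)$ to $g|_{V^\perp}\, T_W\, (g|_V)^{-1} \in \LinMaps(V', V'^\perp)$, which is an operator-norm isometry; hence $\dop{gV}(gW_1, gW_2) = \dop{V}(W_1, W_2)$ and $g \cdot \Bop{V}(V, r) = \Bop{gV}(gV, r)$. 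By transitivity it suffices to produce $C_1, C_2$ for a single fixed plane $V_0$.

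Next I would transfer to the chart. Via $\psi_{V_0} : \SU_{V_0} \to \LinMaps(V_0, V_0^\perp)$, the ball $\Bop{V_0}(V_0, r)$ corresponds to the open $r$-ball about the origin in the finite-dimensional normed space $\LinMaps(V_0, V_0^\perp)$, with compact closure, and $\dop{V_0}$ pulls back to the operator norm. Define the smooth map $\Pi(T) = P_{\psi_{V_0}^{-1}(T)}$ from $\LinMaps(V_0, V_0^\perp)$ into $L(\R^n)$ equipped with operator norm; then $\dproj$ pulls back to $F(T_1, T_2) = \|\Pi(T_1) - \Pi(T_2)\|$. Using the explicit parametrization $\psi_{V_0}^{-1}(T) = \{x + Tx : x \in V_0\}$, a direct calculation gives
\[
d\Pi_0(S) = \begin{pmatrix} 0 & S^* \\ S & 0 \end{pmatrix}
\]
in block form relative to $\R^n = V_0 \oplus V_0^\perp$, and this block matrix has operator norm equal to $\|S\|$; in particular $d\Pi_0$ is an isometry. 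Since $\Pi$ factors through the inclusion $\Gr(n,k) \hookrightarrow L(\R^n)$ given by $W \mapsto P_W$, and the $O(n)$-equivariance from the first step shows that this inclusion is an immersion at every plane, $d\Pi_T$ is a linear injection for every $T$. Continuity of $d\Pi$ together with compactness of $\overline{B(0,r)}$ then provide constants $0 < c \leq C$ with $c\|S\| \leq \|d\Pi_T(S)\| \leq C\|S\|$ uniformly in $T$ and $S$; integrating along the segment from $T_1$ to $T_2$ yields $c\|T_1 - T_2\| \leq F(T_1, T_2) \leq C\|T_1 - T_2\|$, and after slightly relaxing the constants (and interpreting the statement for $W_1 \neq W_2$) we obtain the strict inequalities in the claim.

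The main technical hurdle is the uniform lower bound $\|d\Pi_T(S)\| \geq c\|S\|$ across the compact ball: the upper bound is automatic from smoothness, but promoting pointwise injectivity of $d\Pi_T$ to a uniform estimate requires the compactness argument above. Conceptually this is simply the observation that $W \mapsto P_W$ realizes the compact manifold $\Gr(n,k)$ as a smoothly embedded submanifold of $(L(\R^n), \|\cdot\|)$, so its intrinsic and extrinsic distances are bi-Lipschitz on any compact piece; in our setup $\dop{V_0}$ plays the role of the intrinsic chart metric and $\dproj$ that of the extrinsic one.
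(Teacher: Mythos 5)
Your route is considerably more explicit than the paper's, whose proof consists of two terse compactness assertions: that compactness of $\overline{\Bop{V}(V,r+1)}$ makes the two metrics equivalent on the ball, and that compactness of the Grassmannian then makes the constants independent of $V$. Your $O(n)$-equivariance step --- verifying that both $\dproj$ and $\dop{V}$ transform by isometries under $g \in O(n)$ so that a single basepoint $V_0$ suffices --- is a cleaner justification of the second assertion than the paper gives. The explicit computation $d\Pi_0(S) = \left(\begin{smallmatrix} 0 & S^* \\ S & 0 \end{smallmatrix}\right)$ with $\|d\Pi_0(S)\| = \|S\|$ is correct, and the picture of $W \mapsto P_W$ as a smooth embedding of $\Gr(n,k)$ into the operator space is the right one.

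There is, however, a gap in the step ``integrating along the segment from $T_1$ to $T_2$ yields $c\|T_1 - T_2\| \leq F(T_1,T_2)$.'' A uniform pointwise lower bound $\|d\Pi_T(S)\| \geq c\|S\|$ does \emph{not} integrate to a lower bound on $\|\Pi(T_2)-\Pi(T_1)\|$: the integrands $d\Pi_{\gamma(t)}(T_2-T_1)$ can partially cancel as $t$ varies, so a smooth map whose differential is bounded below by $c$ everywhere on a convex domain need not be $c$-expansive (any arc-length-parametrized plane curve with substantial turning furnishes a counterexample). The upper bound does follow by integration, but the lower bound needs, in addition to immersivity, the injectivity of $\Pi$ itself (distinct planes have distinct orthogonal projections), fed into a compactness argument: if no positive lower Lipschitz constant existed, choose pairs $T_1^j \neq T_2^j$ with $\|\Pi(T_1^j)-\Pi(T_2^j)\|/\|T_1^j - T_2^j\| \to 0$; after passing to subsequences, either the pairs separate, contradicting injectivity of $\Pi$, or they collapse to a single $T$ with a limiting unit direction $v$, contradicting injectivity of $d\Pi_T$. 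This is exactly the content of your closing remark that a compact embedded submanifold has comparable intrinsic and extrinsic distances, but the integration step as you wrote it does not deliver it; replace it with the compactness argument, recording explicitly why $\Pi$ is injective.
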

\begin{proof}
	Fix $V\in \Gr(n,k)$. 
	By compactness of $\overline{\Bop{V}(V,r+1)}$, the metric $\dproj$ is equivalent to $\dop{V}$ when we restrict both to $\overline{\Bop{V}(V,r+1)}$. Hence there exists $C_1$ and $C_2 $ as in the statement for $V$ specifically. Since the Grassmannian itself is compact, we can take $C_1$ and $C_2$ to be independent of $V$.
\end{proof}

\section{Transversality estimates} \label{sec:transEstimates} 

After introducing various notions of transversality in \cref{sec:prelimTransv}, we now prove some related results. We start in \cref{sec:TransCrit} by introducing some criteria to check the transversality and semitransversality of a simplex with respect to a constant foliation. Next, in \cref{sec:STToETrans} we show how to deduce $\epsilon$-transversality from $\delta$-semitransversality, under some additional assumptions. Since these results both deal with transversality with respect to a constant foliation, we generalize to arbitrary distributions in \cref{sec:nonConstantDistr}.

\subsection{Transversality via projecting} \label{sec:TransCrit}
In \cref{def:TransPlanes} we defined the transversality of two linear subspaces of $\R^n$ by requiring that the space they span is maximal. An equivalent characterization of transversality of two planes of sufficiently small dimension is the following.
\begin{lemma} \label{lem:TransPlanesQuotient}
	Let $v,w,n\in\N$ be such that $v+w \leq n$. Two linear subspaces $V$ and $W$ of respectively dimension $v$ and $w$ in $\R^n$ are transverse if and only if the (orthogonal) projection $\R^n \to \R^n / W$ is injective when restricted to $V$.
\end{lemma}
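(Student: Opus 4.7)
The plan is to reduce both conditions to the single equivalent condition $V \cap W = \{0\}$.

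First I would unpack the definition. Since $v+w \leq n$, transversality in the sense of \cref{def:TransPlanes} amounts to $\dim(V+W) = v+w$. Applying the standard inclusion–exclusion formula for subspaces,
\[
\dim(V+W) = \dim V + \dim W - \dim(V \cap W) = v + w - \dim(V \cap W),
\]
so $V \pitchfork W$ if and only if $V \cap W = \{0\}$.

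Next I would analyze the projection side. Let $\pi : \R^n \to \R^n/W$ denote the orthogonal projection (identifying $\R^n/W$ with $W^\perp$); its kernel is exactly $W$. Therefore the restriction $\pi|_V : V \to \R^n/W$ has kernel $V \cap W$, and $\pi|_V$ is injective if and only if $V \cap W = \{0\}$.

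Combining the two equivalences gives the statement. The argument is essentially bookkeeping with the rank-nullity theorem, so I do not anticipate any real obstacle; the only place where the hypothesis $v+w \leq n$ is used is in rewriting $\min\{v+w,n\}$ as $v+w$ in the definition of transversality, and this is precisely why the projection criterion fails without this assumption (if $v+w > n$, then $V \cap W$ must be nontrivial, so $\pi|_V$ is never injective, while $V$ and $W$ can still be transverse in the sense of spanning all of $\R^n$).
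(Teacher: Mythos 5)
Your proof is correct, and it is the standard argument: the paper actually states this lemma without proof, treating it as immediate linear algebra, which is exactly what your chain of equivalences
\[
V \pitchfork W \iff \dim(V+W) = v+w \iff V \cap W = \{0\} \iff \ker(\pi|_V) = \{0\}
\]
makes explicit. Your closing remark on the role of the hypothesis $v+w \leq n$ is a nice clarification of why the paper needs that assumption here.
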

In particular, we see that a linear simplex $\Delta$ in $\R^n$ is transverse to a foliation $\fol(V)$ if and only if the map $\pi_V:\R^n \to \R^n / V \cong V^\perp$ is injective when restricted to $\Delta$.

We now use the above observation to characterize the transversality of a linear $d$-simplex in a more inductive manner. We recall the notation $\aspan(S) \subset \R^n$ for the affine span of a set $S \in \R^n$. We illustrate \cref{lem:transSimplex} and its proof in \cref{fig:transSimplex}.

\begin{lemma} \label{lem:transSimplex} 
	Fix $V \in \Gr(n,k)$. Let $p \in \R^n$ be a point and $\Delta$ a linear $(d-1)$-simplex in $\R^n\setminus \{p\}$ with $d \leq n-k$. Assume that $\Delta$ is transverse to the foliation $\folconst{V}$. Then, the linear simplex $\join{p}{\Delta}$ is transverse to $\folconst{V}$ if and only if $\pi_V(p) \notin \pi_V(\aspan(\Delta))$.
\end{lemma}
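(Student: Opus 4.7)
The plan is to translate the transversality statements into injectivity of the projection $\pi_V \colon \R^n \to \R^n/V$ via \cref{lem:TransPlanesQuotient}, and then read off the characterization from a direct-sum decomposition of the tangent plane of $\join{p}{\Delta}$. The hypothesis $d \leq n-k$ is exactly what is needed to apply \cref{lem:TransPlanesQuotient} to the $d$-plane $\Gr(\join{p}{\Delta})$, and it also covers $\Delta$ itself since $(d-1)+k \leq n$.

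Pick a vertex $q_0$ of $\Delta$ and set $w_0 = q_0 - p$; together with a basis of $\Gr(\Delta)$ obtained from the remaining edges $q_i - q_0$, these vectors span $\Gr(\join{p}{\Delta})$. Since $\join{p}{\Delta}$ is a simplex (hence non-degenerate by convention), $p \notin \aspan(\Delta)$, so in particular $w_0 \notin \Gr(\Delta)$, and we obtain the direct sum decomposition
\[ \Gr(\join{p}{\Delta}) = \R \cdot w_0 \oplus \Gr(\Delta). \]
The hypothesis that $\Delta$ is transverse to $\folconst{V}$ gives, by \cref{lem:TransPlanesQuotient}, that $\pi_V$ is injective on $\Gr(\Delta)$, so any failure of injectivity of $\pi_V$ on $\Gr(\join{p}{\Delta})$ must involve the $w_0$ component. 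Concretely, a nonzero kernel element must be of the form $c w_0 + u$ with $c \neq 0$ and $u \in \Gr(\Delta)$, equivalently $\pi_V(w_0) \in \pi_V(\Gr(\Delta))$.

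Now I translate this condition back in terms of $p$. Using $\aspan(\Delta) = q_0 + \Gr(\Delta)$ and the linearity of $\pi_V$, one has $\pi_V(\aspan(\Delta)) = \pi_V(q_0) + \pi_V(\Gr(\Delta))$, and hence
\[ \pi_V(w_0) = \pi_V(q_0) - \pi_V(p) \in \pi_V(\Gr(\Delta)) \iff \pi_V(p) \in \pi_V(\aspan(\Delta)). \]
Combining the two equivalences, $\pi_V$ fails to be injective on $\Gr(\join{p}{\Delta})$ if and only if $\pi_V(p) \in \pi_V(\aspan(\Delta))$. Appealing once more to \cref{lem:TransPlanesQuotient}, this is precisely the statement that $\join{p}{\Delta}$ fails to be transverse to $\folconst{V}$ exactly when $\pi_V(p) \in \pi_V(\aspan(\Delta))$, which is the claim.

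The only subtle point, and the step where care is needed, is making sure the decomposition $\Gr(\join{p}{\Delta}) = \R \cdot w_0 \oplus \Gr(\Delta)$ is genuinely a direct sum: this relies on $p \notin \aspan(\Delta)$, which is a consequence of our convention that simplices are non-degenerate. Everything else is a routine linear-algebra manipulation built on top of \cref{lem:TransPlanesQuotient}.
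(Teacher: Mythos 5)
Your proof is correct and takes essentially the same route as the paper's: both reduce transversality to injectivity of $\pi_V$ via \cref{lem:TransPlanesQuotient} and then unwind what injectivity on $\Gr(\join{p}{\Delta})$ means given injectivity on $\Gr(\Delta)$. The paper phrases the final step geometrically (whether $\pi_V(\join{p}{\Delta})$ is a non-degenerate $d$-simplex), whereas you make the linear algebra explicit through the direct-sum decomposition $\Gr(\join{p}{\Delta}) = \R w_0 \oplus \Gr(\Delta)$ and an analysis of kernel elements; the content is the same, just spelled out more carefully.
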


\begin{figure}[h]
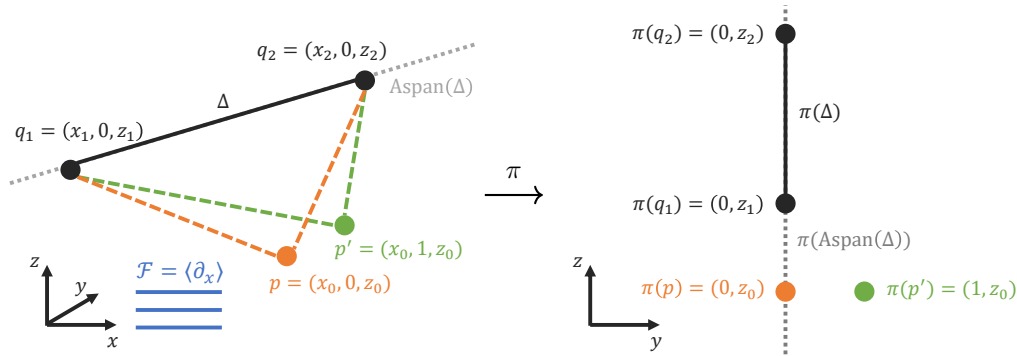

	\centering
	\begin{subfigure}[b]{0.4\textwidth}
		\centering
		\includegraphics[width=\textwidth,page=2]{Fig_jiggling}
	\end{subfigure}
	\raisebox{2cm}{\begin{tikzpicture}
			\draw[thick,->] (0,0) -- ++ (0.05\textwidth,0);
			\node at (0.025\textwidth,0.3cm) {$\pi$};
	\end{tikzpicture}}
	\begin{subfigure}[b]{0.4\textwidth}
		\centering
		\includegraphics[width=\textwidth,page=3]{Fig_jiggling}
	\end{subfigure}
	\centering
	\caption{On the left we see a $1$-simplex $\Delta$ in $\R^3$ that is transverse to the foliation $\fol = \langle \partial_x \rangle$ together with two points $p$ and $p'$. The simplex $\join{p}{\Delta}$ is not transverse, whereas $\join{p'}{\Delta}$ is. On the right, in $\R^2$, we see that indeed $\pi(p) \in \pi(\aspan(\Delta))$, whereas $\pi(p') \notin \pi(\aspan(\Delta))$.} \label{fig:transSimplex}
\end{figure}
\begin{proof}	
	By assumption we know that $\Delta$ is transverse to $\folconst{V}$ and hence \cref{lem:TransPlanesQuotient} tells us that $\pi_V(\Delta)$ is a $(d-1)$-simplex. The simplex $\join{p}{\Delta}$ is transverse to $\folconst{V}$ if and only if $\pi_V(\Delta)$ is a $d$-simplex. Hence $\join{p}{\Delta}$ is transverse if and only if $\pi_V(p) \notin \pi_V(\aspan(\Delta))$.	
\end{proof}

Instead of projecting along the foliation, we can also project along the simplex. Here we write $\pi_\Delta$ for the projection $\R^n \to \R^n / \Gr(\Delta) \cong \aspan(\Delta)^\perp$.
\begin{lemma}\label{lem:StProjectSimplex}
	Fix $V\in \Gr(n,k)$. Let $p \in \R^n$ be a point and $\Delta$ a linear $(d-1)$-simplex in $\R^n\setminus \{p\}$ with $d \leq n-k$. Assume that $\Delta$ is transverse to the foliation $\folconst{V}$. Then, the linear simplex $\join{p}{\Delta}$ is transverse to $\folconst{V}$ if and only if $\pi_{\Delta}(p) \notin \pi_{\Delta}(V)$.
\end{lemma}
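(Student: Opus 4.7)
The plan is to derive this statement in the same spirit as \cref{lem:transSimplex}, but projecting along $W := \Gr(\Delta)$ instead of along $V$. First I would write the direction of the larger simplex as $\Gr(\join{p}{\Delta}) = W + \R(p - q_0)$ for any vertex $q_0$ of $\Delta$, so that transversality to $V$ becomes the dimension equality $\dim(W + V + \R(p - q_0)) = d + k$, which is the maximal possible value since $d + k \le n$.

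Next I would push this condition through $\pi_\Delta$. The hypothesis that $\Delta$ is transverse to $\folconst{V}$, combined with \cref{lem:TransPlanesQuotient}, gives that $\pi_\Delta$ restricts to an injection on $V$, so $\pi_\Delta(V)$ is a linear $k$-plane in $\R^n/W$. Since $\ker \pi_\Delta = W$ and $W$ sits inside the sum $W + V + \R(p - q_0)$, the rank-nullity identity reduces the dimension condition to $\dim\bigl(\pi_\Delta(V) + \R\,\pi_\Delta(p - q_0)\bigr) = k + 1$, which, because $\pi_\Delta(V)$ already has dimension $k$, is equivalent to $\pi_\Delta(p - q_0) \notin \pi_\Delta(V)$.

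Finally, I would note that $\pi_\Delta$ collapses the coset $\aspan(\Delta) = q_0 + W$ to a single point of $\R^n/W$, so the expression $\pi_\Delta(p - q_0)$ is independent of the choice of vertex $q_0 \in \aspan(\Delta)$ and coincides with $\pi_\Delta(p)$ under the identification that absorbs the constant $\pi_\Delta(q_0)$. This yields the stated criterion. An alternative route, which I might use as a sanity check, is to apply \cref{lem:transSimplex} to obtain the equivalent Euclidean condition $p \notin \aspan(\Delta) + V$, and then observe that the set $\aspan(\Delta) + V$ is $W$-saturated, so membership is detected by $\pi_\Delta$.

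The principal subtlety is the bookkeeping between affine and linear structures: $\aspan(\Delta)$ is an affine translate of the linear plane $W$ and need not pass through the origin, so one must either implicitly absorb the constant $\pi_\Delta(\aspan(\Delta))$ into the notation or work throughout with $p - q_0$ before projecting. Apart from this, every step is a direct consequence of the projection characterisation of transversality in \cref{lem:TransPlanesQuotient}.
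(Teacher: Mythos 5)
Your proof is correct and takes essentially the same route as the paper: both project along $W = \Gr(\Delta)$ and apply \cref{lem:TransPlanesQuotient}. Where you use rank--nullity to peel off the kernel $W$, the paper instead factors $\pi_{\join{p}{\Delta}}$ as $\pi_{\pi_\Delta(p)} \circ \pi_\Delta$ and asks when the $k$-plane $\pi_\Delta(V)$ survives the second projection --- two phrasings of the same dimension count. Your caution about the affine constant is well placed and, in fact, slightly sharper than the paper's own treatment: read literally, the condition $\pi_\Delta(p) \notin \pi_\Delta(V)$ is only exact after translating $\Delta$ so that $\aspan(\Delta)$ contains the origin; the honest criterion is $\pi_\Delta(p) \notin \pi_\Delta(\aspan(\Delta)) + \pi_\Delta(V)$, which is precisely what both your main argument (via $\pi_\Delta(p - q_0) \notin \pi_\Delta(V)$) and your alternative via \cref{lem:transSimplex} produce. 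The paper's proof makes the same normalisation implicitly --- writing $\R^n/\aspan(\Delta)$ as a vector-space quotient presupposes $\aspan(\Delta)$ is linear --- and the places where the lemma is used (e.g.\ the proof of \cref{lem:semitransDiffSimplex}) explicitly put a vertex of $\Delta$ at the origin, which restores agreement. Your alternative derivation via \cref{lem:transSimplex} combined with the $W$-saturation of $\aspan(\Delta) + V$ is arguably the cleanest way to keep this affine bookkeeping explicit, and would make a good addition if you were to write this up in full.
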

\begin{proof}
	We start by defining the following three projections 
	\begin{align*}
		\pi_{\join{p}{\Delta}}&: \R^n\to \R^n /\aspan(\join{p}{\Delta}) \cong \aspan(\join{p}{\Delta})^\perp \\
		\pi_{\Delta} &: \R^n\to \R^n /\aspan(\Delta) \cong \aspan(\Delta)^\perp \\	
		\pi_{\pi_\Delta(p)} &: \aspan(\Delta)^\perp \to \aspan(\Delta)^\perp/\aspan(\pi_\Delta(p)) \cong \aspan(\join{p}{\Delta})^\perp,
	\end{align*}
	where we note that the map $\pi_{\join{p}{\Delta}}$ can be written as the composition $\pi_{\pi_\Delta(p)} \circ \pi_{\Delta}$. 
	
	Since the simplex $\Delta$ is transverse to $\folconst{V}$, \cref{lem:TransPlanesQuotient} tells us that $\pi_{\Delta}(V)$ is a $k$-plane. Similarly, we know that $\join{p}{\Delta}$ is transverse to $\folconst{V}$ if and only if $\pi_{\join{p}{\Delta}}(V)$ is a $k$-plane. Hence $\join{p}{\Delta}$ is transverse to $\folconst{V}$ if and only if the $k$-plane $\pi_\Delta(V)$ is mapped to a $k$-plane under  $\pi_{\pi_\Delta(p)}$. This is the case if and only if $\pi_{\Delta}(p) \notin \pi_{\Delta}(V)$. 
\end{proof}

In \cref{lem:transSimplex,lem:StProjectSimplex} we need to assume that $d \leq n-k$ for the ``if and only if'' part of the claim. When interested in simplices of higher dimensions, we employ the following remark.
\begin{remark} \label{rem:TransDim}
	A simplex $\Delta$ of dimension $d>n-k$ is transverse to $\fol$ if and only if at least one of its $(n-k)$-simplices is transverse to $\fol$. 
\end{remark} 

\subsubsection{Semitransversality via projecting}

Using \cref{lem:transSimplex} we define the following equivalent notion of semitransversality (recall \cref{def:semitrans}) for linear simplices of dimension at most complementary to the rank of the foliation. 
\begin{corollary} \label{cor:semitransEmptyIntQuotient}
	Fix $V\in\Gr(n,k)$. Let $\delta>0$ be given, together with a point $p\in \R^n$ and a linear $d$-simplex $\Delta$ in $\R^n$ with $d <n-k$. Then, the simplex $\join{p}{\Delta}$ is $\delta$-semitransverse to $\folconst{V}$ in $p$ if and only if the simplex $\Delta$ is transverse to $\folconst{V}$ and
	\[
	B(\pi_V (p),\delta) \cap \pi_V (\aspan \Delta) = \emptyset.
	\]
\end{corollary}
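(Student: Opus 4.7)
The plan is to reduce both directions of the equivalence to \cref{lem:transSimplex}. The key auxiliary observation is that the orthogonal projection $\pi_V \colon \R^n \to \R^n/V \cong V^\perp$ is $1$-Lipschitz and, moreover, surjects the open ball $B(p,\delta) \subset \R^n$ onto the open ball $B(\pi_V(p),\delta) \subset V^\perp$: the containment $\pi_V(B(p,\delta)) \subseteq B(\pi_V(p),\delta)$ follows from the Lipschitz bound, while any $y \in B(\pi_V(p),\delta)$ is attained by the lift $y + (p - \pi_V(p))$, which lies in $B(p,\delta)$ since $p - \pi_V(p) \in V$. Hence the condition $B(\pi_V(p),\delta) \cap \pi_V(\aspan \Delta) = \emptyset$ is equivalent to requiring $\pi_V(p') \notin \pi_V(\aspan \Delta)$ for every $p' \in B(p,\delta)$.

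For the forward direction, I first argue that the hypothesis forces $\Delta$ itself to be transverse to $\folconst{V}$. Taking $p' = p$ in the definition of $\delta$-semitransversality, the simplex $\join{p}{\Delta}$ is transverse. Because $d+1+k \leq n$, this is equivalent to $\Gr(\join{p}{\Delta}) \cap V = \{0\}$, and the inclusion $\Gr(\Delta) \subseteq \Gr(\join{p}{\Delta})$ then gives $\Gr(\Delta) \cap V = \{0\}$, i.e.\ $\Delta$ is transverse. Now, for each $p' \in B(p,\delta)$, \cref{lem:transSimplex} applied to the non-degenerate transverse simplex $\join{p'}{\Delta}$ yields $\pi_V(p') \notin \pi_V(\aspan \Delta)$, and the auxiliary observation converts this to the required emptiness statement.

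For the reverse direction, assume $\Delta$ is transverse to $\folconst{V}$ and that the intersection $B(\pi_V(p),\delta) \cap \pi_V(\aspan \Delta)$ is empty. Fix any $p' \in B(p,\delta)$. The $1$-Lipschitz property gives $\pi_V(p') \in B(\pi_V(p),\delta)$, so by hypothesis $\pi_V(p') \notin \pi_V(\aspan \Delta)$; in particular $p' \notin \aspan \Delta$, so $\join{p'}{\Delta}$ is non-degenerate. A final application of \cref{lem:transSimplex} then shows that $\join{p'}{\Delta}$ is transverse to $\folconst{V}$, and since $p'$ was arbitrary this is precisely $\delta$-semitransversality in $p$.

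The argument is essentially bookkeeping, translating between the quantifier in the definition of $\delta$-semitransversality and the geometric intersection condition via \cref{lem:transSimplex}. I do not anticipate a genuine obstacle; the only point that deserves care is the downward inheritance of transversality from $\join{p}{\Delta}$ to its face $\Delta$ in the forward direction, which depends essentially on the dimension hypothesis $d < n - k$ enabling transversality to be read as the algebraic condition $\Gr \cap V = \{0\}$ for both simplices.
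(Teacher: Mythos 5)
Your argument is correct and is exactly what the paper has in mind in labelling this a corollary of \cref{lem:transSimplex}: one applies the lemma to $p'$ for each $p' \in B(p,\delta)$ and translates the pointwise conclusion into the intersection-emptiness statement via the fact that $\pi_V$ maps $B(p,\delta)$ onto $B(\pi_V(p),\delta)$ (an observation the paper itself records when setting up the projection $\pi$ in \cref{sec:PerturbVertex}). Your care in deriving the transversality of $\Delta$ from that of $\join{p}{\Delta}$ in the forward direction, and in noting that $\pi_V(p') \notin \pi_V(\aspan\Delta)$ already forces non-degeneracy of $\join{p'}{\Delta}$ in the reverse direction, fills in precisely the bookkeeping the paper leaves implicit.
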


Equivalently, from \cref{lem:StProjectSimplex} we obtain the following.
\begin{corollary}\label{cor:StProjectSimplex} 
	Fix $V\in \Gr(n,k)$. Let $\delta>0$ be given, together with a point $p \in \R^n$ and a linear $d$-simplex in $\R^n$ with $d<n-k$. Then, the simplex $\join{p}{\Delta}$ is $\delta$-semitransverse to $\folconst{V}$ in $p$ if and only if the simplex $\Delta$ is transverse to $\folconst{V}$ and 
	\[B(\pi_\Delta(p),\delta) \cap \pi_\Delta(V) = \emptyset . \]
\end{corollary}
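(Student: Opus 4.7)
The plan is to unwrap the definition of $\delta$-semitransversality and reduce to the already-proven \cref{lem:StProjectSimplex}, exactly mirroring how \cref{cor:semitransEmptyIntQuotient} was obtained from \cref{lem:transSimplex}. By definition, $\join{p}{\Delta}$ is $\delta$-semitransverse to $\folconst{V}$ at $p$ if and only if, for every $p' \in B(p,\delta)$, the simplex $\join{p'}{\Delta}$ is non-degenerate and transverse to $\folconst{V}$. I would first check that transversality of $\Delta$ is a necessary consequence: in the regime $d+1\leq n-k$, transversality of $\join{p'}{\Delta}$ forces $\dim(V+\aspan(\join{p'}{\Delta}))=d+1+k$, and since adding one vector can only raise the dimension by at most one, this pins down $\dim(V+\aspan(\Delta))=d+k$, i.e., $\Delta$ transverse to $\folconst{V}$.

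Assuming now $\Delta$ is transverse, I would apply \cref{lem:StProjectSimplex} pointwise: for each $p'\in B(p,\delta)$, the simplex $\join{p'}{\Delta}$ is transverse to $\folconst{V}$ if and only if $\pi_\Delta(p')\notin \pi_\Delta(V)$. Hence $\delta$-semitransversality at $p$ amounts to $\pi_\Delta(p') \notin \pi_\Delta(V)$ for every $p' \in B(p,\delta)$.

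The next step is a simple geometric observation: since $\pi_\Delta : \R^n \to \aspan(\Delta)^\perp$ is the orthogonal projection, it carries the open Euclidean ball $B(p,\delta) \subset \R^n$ onto the open ball $B(\pi_\Delta(p),\delta) \subset \aspan(\Delta)^\perp$. Therefore the quantified condition above is equivalent to
\[
B(\pi_\Delta(p),\delta) \cap \pi_\Delta(V) = \emptyset,
\]
which is precisely the condition in the statement. Finally, non-degeneracy of $\join{p'}{\Delta}$ comes for free: since $V$ is a linear subspace, $0 \in \pi_\Delta(V)$, so the disjointness condition automatically excludes $\pi_\Delta(p')=0$, i.e., $p' \in \aspan(\Delta)$.

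I do not anticipate a substantial obstacle here; the proof is essentially a direct transcription of the argument for \cref{cor:semitransEmptyIntQuotient} with $\pi_V$ replaced by $\pi_\Delta$. The only mildly delicate point is the extraction of transversality of $\Delta$ as a necessary condition from the ``if and only if'' of \cref{lem:StProjectSimplex}, which is where the dimension assumption $d<n-k$ enters.
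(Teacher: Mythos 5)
Your proposal is correct and matches what the paper intends: the paper itself gives no explicit proof, prefacing the corollary with ``Equivalently, from \cref{lem:StProjectSimplex} we obtain the following,'' so the expected argument is exactly the pointwise application of \cref{lem:StProjectSimplex} combined with the observation that the orthogonal projection $\pi_\Delta$ carries $B(p,\delta)$ onto $B(\pi_\Delta(p),\delta)$, which is what you do. The one point worth flagging is your final remark that the disjointness condition forces $\pi_\Delta(p')\neq 0$ and hence $p'\notin\aspan(\Delta)$: this uses the paper's implicit convention (visible in the proof of \cref{lem:StProjectSimplex}, where the quotient $\R^n/\aspan(\Delta)$ is taken) that a vertex of $\Delta$ has been translated to the origin so that $\aspan(\Delta)=\lspan(\Delta)$, and your argument is correct once that normalization is made explicit.
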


\subsection{From semitransversality to \texorpdfstring{$\epsilon$}{ε}-transversality}\label{sec:STToETrans}
The goal of this section is to prove \cref{cor:inducSemiTransToTrans}, which states that if each subsimplex of a simplex $\Delta$ is $\delta$-semitransverse in one of its vertices, the simplex $\Delta$ is $\epsilon$-transverse for some $\epsilon$ that can be computed from $\delta$. Our strategy for this proof is as follows: we show that in the former setting, the simplex $\Delta$ remains transverse under simultaneous $\zeta$-perturbations of all of its vertices for some $\zeta$ depending on $\delta$, which implies in turn $\epsilon$-transversality.

\subsubsection{From perturbing all vertices to \texorpdfstring{$\epsilon$}{ε}-transversality} \label{sec:PerturbVertToETrans}
We first prove the claim that simultaneously perturbing all vertices of a $d$-simplex $\Delta$ in $\R^n$ corresponds to a neighborhood of $\Gr(\Delta)$ in the Grassmannian. For this result, we introduce the set $\SD(\Delta,\zeta)\subset \Gr(n,d)$ of planes which are the affine span of a simplex obtained from $\Delta$ by simultaneously perturbing all of its vertices by at most $\zeta$. That is, writing $\Delta = \langle v_0,\dots,v_d\rangle$, we define
\[\SD(\Delta,\zeta) = \left\{ \Gr\left(\left\langle v'_0,\dots,v'_d\right\rangle\right) \mid d(v'_i,v_i)<\zeta \text{ for all } i \in\{0,\dots,d\} \right\}.\]
We require $\zeta<\rmin(\Delta)/2$ to make sure the resulting simplices do not degenerate, where we remind the reader that $\rmin$ and $\maxcoeff$ are defined in \cref{def:rminmax,def:maxcoeff} respectively. We introduce the notation $\lspan(S)$ for the \textbf{linear span} of a subset $S \subset \R^n$.
\begin{lemma} \label{lem:PerturbVertToNbhGr}
	Let $d\in\N$ be given. Then there exist $C_1,C_2>0$ such that for all linear $d$-simplices $\Delta$ in $\R^n$ and all $\zeta>0$ satisfying \[\zeta<\min\left\{\frac{\rmin(\Delta)}{2},\frac{1}{\maxcoeff(\Delta)}\right\},\]
	it holds for all $\Delta'\in \SD(\Delta,\zeta)$ that
	\[ \frac{C_1 \zeta}{\rmax(\Delta)} <  \dproj(\Delta,\Delta')  < C_2 \zeta \maxcoeff(\Delta). \]
\end{lemma}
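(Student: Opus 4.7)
The plan is to read the displayed double inequality as a sandwich on the set $\SD(\Delta, \zeta) \subseteq \Gr(n, d)$: the upper bound is the inclusion $\SD(\Delta, \zeta) \subseteq \Bproj(\Gr(\Delta), C_2 \zeta \maxcoeff(\Delta))$, while the lower bound is the containment $\Bproj(\Gr(\Delta), C_1 \zeta / \rmax(\Delta)) \subseteq \SD(\Delta, \zeta)$. Fix $V := \Gr(\Delta)$. By Lemma \ref{lem:GrMetricChartOpNorm}, the metrics $\dproj$ and $\dop{V}$ are equivalent on a neighborhood of $V$, uniformly in $V$, so it suffices to establish analogous inclusions with $\dop{V}$-balls and absorb the equivalence constants into $C_1, C_2$. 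This reduces everything to a linear-algebra computation in the chart $\LinMaps(V, V^\perp)$ around $V$.

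For the upper inclusion, take $\Delta' \in \SD(\Delta, \zeta)$ with vertices $v_i' = v_i + \epsilon_i$, $\|\epsilon_i\| < \zeta$, and write $w_i = v_i - v_0$, $w_i' = v_i' - v_0'$, so that $\|w_i - w_i'\| < 2\zeta$. Since $\dim V = \dim V'$, the quantity $\dproj(V,V')$ agrees with $\sup_{v \in V,\, \|v\|=1} d(v, V')$. By the defining property of $\maxcoeff$, any unit $v \in V$ can be written as $v = \sum_i \lambda_i w_i$ with $\max_i |\lambda_i| \leq \maxcoeff(\Delta)$, and then the candidate $v' := \sum_i \lambda_i w_i' \in V'$ satisfies $\|v - v'\| \leq 2d\,\maxcoeff(\Delta)\,\zeta$, yielding the upper bound with $C_2 = O(d)$.

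For the lower inclusion, fix $V' \in \Bop{V}(V, C_1 \zeta / \rmax(\Delta))$ and write $V' = \{x + Tx \mid x \in V\}$ in the standard Grassmannian chart, where $\|T\|_{\mathrm{op}} < C_1 \zeta / \rmax(\Delta)$. I then realize $V'$ as $\Gr$ of an explicit perturbation: set $v_0' := v_0$ and $v_i' := v_i + T(v_i - v_0)$ for $i = 1, \ldots, d$. Then $\|v_i' - v_i\| \leq \|T\|\,\rmax(\Delta) < C_1 \zeta$, so for $C_1$ chosen small enough (to absorb the comparison constant of Lemma \ref{lem:GrMetricChartOpNorm}), the simplex $\Delta' := \langle v_0', \ldots, v_d' \rangle$ lies in $\SD(\Delta, \zeta)$. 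The differences $v_i' - v_0' = w_i + T w_i$ form a basis of $V'$, so $\Gr(\Delta') = V'$, as needed.

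The main obstacle is keeping the constants uniform. The metric comparison of Lemma \ref{lem:GrMetricChartOpNorm} must be uniform across all $V \in \Gr(n, d)$, which follows from compactness of the Grassmannian. Non-degeneracy of each perturbed $\Delta'$ is ensured by the hypothesis $\zeta < \rmin(\Delta)/2$, which guarantees every perturbed vertex remains off the affine hull of its opposite face. The other hypothesis $\zeta < 1/\maxcoeff(\Delta)$ enters to keep the constructed $V'$ within the chart around $V$, so that the upper-bound computation applies; once these domain-of-definition and compactness issues are dealt with, both inclusions reduce to short linear-algebra estimates.
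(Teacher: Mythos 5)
Your proof is correct and follows the same overall structure as the paper's: read the displayed double inequality as the pair of inclusions $\Bproj(\Gr(\Delta), C_1\zeta/\rmax(\Delta)) \subseteq \SD(\Delta,\zeta) \subseteq \Bproj(\Gr(\Delta), C_2\zeta\maxcoeff(\Delta))$, and prove them by working with the graph parametrization $V' = \{x + Tx \mid x \in V\}$. Your lower-bound step (realize an arbitrary plane with $\|T\| < \zeta/\rmax(\Delta)$ as $\Gr$ of an explicit perturbation $v_i' = v_i + T(v_i - v_0)$, then use Lemma~\ref{lem:GrMetricChartOpNorm} to convert to $\dproj$) is essentially identical to the paper's. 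For the upper bound you diverge in a small but genuine way: the paper estimates $\|T\|$ in the chart (bounding $|Tv_i|$ in terms of $|(v_i' - v_0') - v_i| < 2\zeta$) and then passes through Lemma~\ref{lem:GrMetricChartOpNorm}, whereas you bypass the chart entirely by invoking the identity $\dproj(V,V') = \sup_{v\in V, \|v\|=1} d(v,V')$ for equidimensional subspaces and estimating the right-hand side directly via $v' = \sum_i\lambda_i w_i'$. Your route is slightly cleaner: it avoids the fact that $v_i + Tv_i$ and $v_i' - v_0'$ need not coincide (which the paper's wording glosses over), and it shows the upper bound does not actually require the hypothesis $\zeta < 1/\maxcoeff(\Delta)$. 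The one thing you should flag is that the identity $\dproj(V,V') = \sup_{v\in V, \|v\|=1} d(v,V')$ — equivalently $\|P_V - P_{V'}\| = \|(I - P_{V'})P_V\|$ for equal-rank orthogonal projections — is used without proof and is nowhere stated in the paper; it is standard (a consequence of the principal-angle decomposition) but needs a reference or a short argument.
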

\begin{proof} 
	Let the set $\{v_0,\dots,v_d \}$ consist of the vertices of $\Delta$, where we assume without loss of generality that $v_0=0$.
	
	We first deal with the lower bound. Let $T: \lspan(\Delta) \to \lspan(\Delta)^\perp$ be a linear map whose operator norm is at most $\zeta/\rmax(\Delta)$, then we observe that 
	\[ |Tv_i | \leq \|T\| |v_i| \leq \|T\| \cdot \rmax(\Delta) < \zeta. \]
	It hence follows by definition that the plane $\{x+Tx \mid x\in\lspan(\Delta)\}$ is contained in $\SD(\Delta,\zeta)$. By taking $r$ in \cref{lem:GrMetricChartOpNorm} to be $n$, there exists $C_1>0$, only depending on $n$ and $d$, such that all planes in a $C_1 \zeta/\rmax(\Delta)$-neighborhood of $\Gr(\Delta)$ are contained in $\SD(\Delta,\zeta)$.

	Next we deal with the upper bound. We let $\Delta' = \langle v'_0,\dots,v'_d\rangle$ be a simplex such that $ \Gr(\Delta') \in \SD(\Delta,\zeta)$ and define its translation $\Delta''= \langle 0,v'_1-v'_0,\dots,v'_d-v'_0\rangle $. It then holds that $\Gr(\Delta') = \Gr(\Delta'')$. Hence they both correspond to the plane $\{ x+ Tx \mid x \in \lspan(\Delta) \}$, where $T$ is again a linear map $\lspan(\Delta) \to \lspan(\Delta)^\perp$. Since $d(v'_i,v_i)<\zeta$, we obtain that $| Tv_i| \leq |(v'_i-v'_0) -v_i| < 2\zeta$ for all $i\in\{1,\dots,d\}$. We then observe that
	\begin{align*}
		\|T\| &= \max_{\substack{ \lambda_1,\dots,\lambda_d \in \R \\ |\sum \lambda_i v_i | =1 }} \left| T \left( \sum \lambda_i v_i \right) \right|
		\leq \max_{\substack{ \lambda_1,\dots,\lambda_d \in \R \\ |\sum \lambda_i v_i | =1 }}  \sum | \lambda_i ||Tv_i |  \\
		&< 2\zeta \cdot \max_{\substack{ \lambda_1,\dots,\lambda_d \in \R \\ |\sum \lambda_i v_i | =1 }}  \sum | \lambda_i |
		\leq 2n \zeta \maxcoeff(\Delta).
	\end{align*} 
	Hence every plane in $\SD(\Delta,\zeta)$ is of the form $\{x+Tx \mid  x\in\lspan(\Delta)\}$ where $T$ satisfies $\|T\| < 2n \zeta\maxcoeff(\Delta)$. By taking $r$ in \cref{lem:GrMetricChartOpNorm} to be $n$, there exists $C_2>0$, only depending on $n$ and $d$, such that all planes in $\SD(\Delta,\zeta)$ are contained in a $C_2  \zeta\maxcoeff(\Delta)$-neighborhood of $\Gr(\Delta)$.
\end{proof}

In the case where these $\zeta$-perturbations of $\Delta$ preserve the transversality of $\Delta$, we obtain a lower bound for the transversality of $\Delta$.  
\begin{corollary}\label{lem:semiVSQuantTrans}
	Fix $d\in \N$ and $V\in\Gr(n,k)$. There exists $C>0$ such that for all $\zeta>0$ and all linear $d$-simplices $\Delta$ in $\R^n$ the following holds. If $\Delta$ is transverse to $\fol$ under simultaneous perturbations of each of its vertices by at most $\zeta$, then 
	 $\Delta$ is $(C\zeta/\rmax(\Delta))$-transverse to $\fol$.
\end{corollary}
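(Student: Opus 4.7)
The plan is to convert the vertex-perturbation hypothesis into a neighborhood condition in the Grassmannian, using Lemma \ref{lem:PerturbVertToNbhGr} as a bridge, and then read off the desired $\epsilon$-transversality directly from Definition \ref{def:transPiecewise}.

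The first step is to simplify what $\epsilon$-transversality means in this setting. Since the foliation $\fol = \folconst{V}$ is constant, the tangent plane $(\Delta)_x = \mathrm{Gr}(\Delta) \in \Gr(n,d)$ does not depend on $x \in \Delta$. Hence Definition \ref{def:transPiecewise} reduces to the purely Grassmannian statement that $\Delta$ is $\epsilon$-transverse to $\fol$ if and only if every plane in $\Bproj(\mathrm{Gr}(\Delta),\epsilon) \cap \Gr(n,d)$ is transverse to $V$ in the sense of Definition \ref{def:TransPlanes}.

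The second step is to unpack the hypothesis. By the definition of $\SD(\Delta,\zeta)$, the statement that $\Delta$ is transverse to $\fol$ under simultaneous $\zeta$-perturbations of its vertices is precisely the statement that every $\Delta' \in \SD(\Delta,\zeta)$ satisfies $\mathrm{Gr}(\Delta') \trans V$. The third step is then to apply the lower-bound half of Lemma \ref{lem:PerturbVertToNbhGr}: its proof produces, for any $\zeta$ with $\zeta/\rmax(\Delta)$ inside the chart radius, an explicit family of perturbations (namely, $v_i \mapsto v_i + Tv_i$ with $T \in \LinMaps(\lspan(\Delta),\lspan(\Delta)^\perp)$ of operator norm at most $\zeta/\rmax(\Delta)$) showing that
\[
\Bproj\!\left(\mathrm{Gr}(\Delta),\, \tfrac{C_1 \zeta}{\rmax(\Delta)}\right) \cap \Gr(n,d) \subseteq \{\mathrm{Gr}(\Delta') \mid \Delta' \in \SD(\Delta,\zeta)\},
\]
where $C_1 > 0$ depends only on $n$ and $d$ (via Lemma \ref{lem:GrMetricChartOpNorm}). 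Combined with the second step, every plane in this ball is transverse to $V$, which by the first step is exactly $(C_1\zeta/\rmax(\Delta))$-transversality of $\Delta$ to $\fol$.

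The one subtlety, and likely the main obstacle in a careful write-up, is that Lemma \ref{lem:PerturbVertToNbhGr} is formally stated under the side conditions $\zeta < \rmin(\Delta)/2$ and $\zeta < 1/\maxcoeff(\Delta)$. Note however that the perturbation used above, $v_i \mapsto v_i + Tv_i$ with $Tv_i \in \lspan(\Delta)^\perp$, automatically preserves non-degeneracy, so the only genuine constraint needed for the lower-bound half is that $\zeta/\rmax(\Delta)$ lies inside the fixed chart radius appearing in Lemma \ref{lem:GrMetricChartOpNorm}. For values of $\zeta$ outside this range, one may replace $\zeta$ by $\zeta_0 = \min\{\zeta, c \cdot \rmax(\Delta)\}$ for an appropriate absolute constant $c$: the hypothesis persists for $\zeta_0 \leq \zeta$, and shrinking the resulting $C_1$ by the corresponding factor produces a single constant $C > 0$ that works uniformly in $\zeta$ and $\Delta$. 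Taking this $C$ concludes the proof.
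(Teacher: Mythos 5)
Your overall approach is exactly the one intended in the paper: the corollary is a direct consequence of the lower bound in Lemma \ref{lem:PerturbVertToNbhGr}, translating the vertex-perturbation hypothesis into the statement that a $\dproj$-ball around $\Gr(\Delta)$ consists of planes transverse to $V$, which is precisely $\epsilon$-transversality (the paper gives no separate proof, leaving exactly this reading implicit). Your reduction of the $\epsilon$-transversality definition to a purely Grassmannian statement and your identification of the relevant side condition as the chart radius in Lemma \ref{lem:GrMetricChartOpNorm} rather than the nondegeneracy bounds are both correct.

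However, the final paragraph, dealing with $\zeta$ outside the chart range, does not work as written. Replacing $\zeta$ by $\zeta_0 = \min\{\zeta, c\rmax(\Delta)\}$ and applying the argument yields $(C_1\zeta_0/\rmax(\Delta))$-transversality, which is a \emph{fixed} quantity ($C_1 c$) when $\zeta > c\rmax(\Delta)$; the conclusion you need, $(C\zeta/\rmax(\Delta))$-transversality, grows without bound as $\zeta \to \infty$, and no amount of shrinking the constant $C$ bridges that gap, since stronger $\epsilon$-transversality is not implied by weaker. The correct resolution is that this regime is vacuous: since $\rmin(\Delta) \leq \rmax(\Delta)$, any $\zeta > \rmax(\Delta)$ in particular exceeds $\rmin(\Delta)$, so one may move the vertex realizing $\rmin$ onto the affine hull of its opposite face and degenerate the simplex; the hypothesis of the corollary then fails, and the implication holds trivially. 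Thus it suffices to treat $\zeta/\rmax(\Delta) \leq 1$, which always lies inside the chart radius, and your main argument goes through verbatim there.
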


\subsubsection{From semitransverse to perturbing all simplices}

Given a simplex that is semitransverse to a constant foliation $\fol(V)$, we now consider the question of what amount of semitransversality is preserved when we vary a subsimplex of codimension $1$. 
Here we use the distance $\dproj$ on $\Gr(n,d)$ from \cref{def:MetricGrProj}.

\begin{lemma} \label{lem:semitransDiffSimplex}
	Fix $d\in \N$ and $V\in\Gr(n,k)$. Let $\beta,\gamma,r>0$ be given, together with a point $p$ in $\R^n$ and two linear $d$-simplices $\Delta_1$ and $\Delta_2$ in $\R^n$ such that 
	\begin{itemize}
		\item $\Delta_1,\Delta_2 \subset B(p,r)$,		
		\item $\Delta_1$ and $\Delta_2$ are transverse to $\folconst{V}$, and
		\item $\dproj(\Gr(\Delta_1),\Gr(\Delta_2)) < \beta$.
	\end{itemize}
	If the simplex $\join{p}{\Delta_1}$ is $\gamma$-semitransverse to $\folconst{V}$ in $p$, then $\join{p}{\Delta_2}$ is $(\gamma - 2 \beta r)$-semitransverse to $\folconst{V}$ in $p$. 
\end{lemma}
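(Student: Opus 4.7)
The plan is to reduce the statement to a distance inequality using \cref{cor:semitransEmptyIntQuotient}: this corollary tells us that for a simplex $\Delta_i$ transverse to $\folconst{V}$, the join $\join{p}{\Delta_i}$ is $\gamma$-semitransverse at $p$ precisely when $d(\pi_V(p), \pi_V(\aspan(\Delta_i))) \geq \gamma$. Since both $\Delta_1$ and $\Delta_2$ are transverse by hypothesis, the task boils down to proving
\[
d(\pi_V(p), \pi_V(\aspan(\Delta_2))) \geq d(\pi_V(p), \pi_V(\aspan(\Delta_1))) - 2\beta r.
\]

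The heart of the argument is a local comparison of the two projected affine planes $\pi_V(\aspan(\Delta_1))$ and $\pi_V(\aspan(\Delta_2))$ inside $V^\perp$. Set $W_i := \Gr(\Delta_i)$ and pick vertices $v_i \in \Delta_i$, which automatically satisfy $v_i \in B(p,r)$. Given any point $q_2 = v_2 + w_2 \in \aspan(\Delta_2)$ with $w_2 \in W_2$, I would produce a companion point $q_1 := v_1 + P_{W_1}(w_2) \in \aspan(\Delta_1)$, where $P_{W_1}$ denotes orthogonal projection. Since $P_{W_2}(w_2) = w_2$, the difference splits as
\[
q_1 - q_2 = (v_1 - v_2) + (P_{W_1} - P_{W_2})(w_2),
\]
whose second term is bounded by $\beta |w_2|$ thanks to $\dproj(W_1, W_2) < \beta$. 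Applying $\pi_V$ (which is $1$-Lipschitz) and invoking the triangle inequality gives an estimate of $|\pi_V(q_2) - \pi_V(q_1)|$, and combining with the hypothesized distance lower bound for $\Delta_1$ yields the result after taking the infimum over $q_2 \in \aspan(\Delta_2)$.

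The main obstacle I expect is controlling the ``basepoint shift'' $\pi_V(v_1 - v_2)$: a crude estimate only gives $|v_1 - v_2| \leq 2r$, not the desired $O(\beta r)$. The resolution exploits that $d(\pi_V(p), \pi_V(\aspan(\Delta_i)))$ equals the ordinary affine distance $d(p, \aspan(\Delta_i) + V)$, so one may equivalently work with the enlarged planes $U_i := W_i + V$; the projective distance obeys the monotonicity $\dproj(U_1, U_2) \leq \dproj(W_1, W_2) = \beta$, and adding a suitable element of $V$ to $q_1$ (which leaves its $\pi_V$-image unchanged) lets us absorb most of the basepoint shift into the foliation direction. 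What remains is then controlled by the containments $\Delta_1, \Delta_2 \subset B(p,r)$, producing the factor of two in the $2\beta r$ error and closing the estimate.
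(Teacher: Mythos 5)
Your general plan --- convert the claim to the distance estimate $d(\pi_V(p), \pi_V(\aspan(\Delta_2))) \ge d(\pi_V(p), \pi_V(\aspan(\Delta_1))) - 2\beta r$ via \cref{cor:semitransEmptyIntQuotient} and compare the two projected planes --- is a reasonable opening, but it diverges from the paper's route in two respects. The paper works with the projections $\pi_{\Delta_i}$ along $\Gr(\Delta_i)$ (via \cref{cor:StProjectSimplex}), not with $\pi_V$; and, more importantly, it first reduces to the case where $\Delta_1$ and $\Delta_2$ share a vertex, deriving the full statement by applying that intermediate claim twice. This reduction is where the factor of two originates, and it is what makes the basepoint shift vanish: once the simplices share a vertex (normalised to the origin), $\aspan(\Delta_i)$ and $V$ are all linear subspaces, and one compares the linear quantities $d(p,\lspan(\Delta_i)+V)$ directly. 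Your proposal never performs this reduction and so is forced to confront the shift $v_1-v_2$ head-on.

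The concrete gap is exactly the step you flag as the resolution: the asserted monotonicity $\dproj(W_1 + V, W_2 + V) \le \dproj(W_1, W_2)$ is false. In $\R^3$ with $V = \R e_1$, take $W_1 = \R(e_1 + \delta e_2)$ and $W_2 = \R(e_1 + \delta e_3)$ for small $\delta > 0$: both lines are transverse to $V$ and $\dproj(W_1,W_2) = \delta\sqrt{2+\delta^2}/(1+\delta^2) = O(\delta)$, yet $W_1+V=\lspan(e_1,e_2)$ and $W_2+V=\lspan(e_1,e_3)$ give $\dproj(W_1+V,W_2+V)=1$. Nor does the ``absorb into the foliation direction'' idea help: adding an element of $V$ to $q_1$ leaves $\pi_V(q_1)$ unchanged, so it cannot shrink $|\pi_V(v_1-v_2)|$, which is genuinely of order $r$ for generic vertex choices. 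The underlying phenomenon is that $\pi_V(\aspan(\Delta_i))$ depends on the angle between $\Gr(\Delta_i)$ and $V$, and that dependence becomes arbitrarily ill-conditioned as $\Gr(\Delta_i)$ approaches $V$; a bound in terms of $\dproj(\Gr(\Delta_1),\Gr(\Delta_2))$ alone cannot control it without an additional quantitative transversality hypothesis on the $\Gr(\Delta_i)$ themselves.
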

\begin{proof}
	We make a slightly different claim stating that if $\Delta_1$ and $\Delta_2$ additionally share a vertex, then the $\gamma$-semitransversality of $\join{p}{\Delta_1}$ implies that $\join{p}{\Delta_2}$ is $(\gamma-\beta r)$-semitransverse. Assuming that this claim holds true, we then obtain a proof for the lemma, without this additional assumption, by applying the claim twice. Hence, in the remainder of this proof we prove the claim and assume, without loss of generality, that $\Delta_1$ and $\Delta_2$ share the origin as vertex.
	
	By \cref{rem:TransDim} we can assume that $d+1 \leq n-k$. We denote by $\pi_i:\R^n \to \R^n / \Gr(\Delta_i) $ the projections and by $P_i:\R^n \to \R^n$ the orthogonal projection onto $\lspan(\Delta_i)$ for $i=1,2$. By assumption it then holds that $\|P_1-P_2\|<\beta$. Throughout the proof we identify $V$ with its associated linear subspace of $\R^n$. 
	
	Since $\join{p}{\Delta_1}$ is $\gamma$-semitransverse to $\folconst{V}$ in $p$, we have by \cref{cor:StProjectSimplex} that 
	\begin{equation} \label{eq:STDiffS}
		B(\pi_1(p),\gamma) \cap \pi_1(V) = \emptyset.
	\end{equation}
	We note that \begin{align*}
		\pi_1^{-1}(\pi_1(p)) &= \{p\}+\lspan(\Delta_1) \text{, and} \\
		\pi_1^{-1}(\pi_1(V)) &= \lspan(V \cup \Delta_1),
	\end{align*}
	where $\{p\}+\lspan(\Delta_1)$ denotes the affine plane through $p$ parallel to $\lspan(\Delta_1)$. Hence \cref{eq:STDiffS} is equivalent to \[d(\{p\}+\lspan(\Delta_1),\lspan(V \cup \Delta_1)) > \gamma, \] which is in turn equivalent to $d(\lspan(\Delta_1),\{-p\}+V) > \gamma$. Both of these distances should be interpreted as the distance between two affine subspaces of $\R^n$. We observe that
	\begin{align*}
		d(\lspan(\Delta_1),\{-p\}+V) &= \inf_{v \in V} \|(v-p)-P_1 (v-p)\| \\ 
		&\leq \inf_{v \in V} \left( \|(v-p)-P_2 (v-p)\| + \|P_2 (v-p)-P_1 (v-p)\|\right) \\ 
		&\leq d(\lspan(\Delta_2),\{-p\}+V) + \beta \inf_{v\in V} \|v-p\|,
	\end{align*}
	which implies that
	\[ d(\lspan(\Delta_2),\{-p\}+V) \geq d(\lspan(\Delta_1),\{-p\}+V) - \beta \inf_{v\in V} \|v-p\| > \gamma - \beta r. \]
	Hence the claim follows.
\end{proof}

The next lemma shows that, using semitransversality and some additional assumptions, we can deduce that the given simplex is transverse under simultaneous perturbations of each of its vertices. Its proof boils down to inductively applying \cref{lem:semitransDiffSimplex}.

\begin{lemma} \label{lem:inducSemiTransToPerturbAll}
	Fix $d\in \N$ and $V\in\Gr(n,k)$. Then there exists $C>0$ such that for all $\delta>0$ and any linear $d$-simplex $\Delta$ the following holds. If each subsimplex of $\Delta$ is $\delta$-semitransverse to $\folconst{V}$ in one of its vertices, 
	then there exists $\zeta>0$, namely 
	\[\zeta = C \min \left\{ \delta,\rmin(\Delta),\frac{\delta}{\maxcoeff(\Delta)\rmax(\Delta)} \right\},\] 
	such that that each subsimplex of $\Delta$ remains transverse under simultaneous perturbations of all its vertices by at most $\zeta$.
\end{lemma}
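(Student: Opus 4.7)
The plan is to proceed by induction on the dimension $d'$ of the subsimplices of $\Delta$. The base case $d'=0$ is immediate: a single vertex is automatically transverse to $\folconst{V}$, and the constraint $\zeta<\rmin(\Delta)$ implicit in the stated bound prevents any two distinct vertices from colliding.

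For the inductive step, fix a subsimplex $\Delta'\subseteq\Delta$ of dimension $d'\geq 1$. By hypothesis $\Delta'$ is $\delta$-semitransverse to $\folconst{V}$ at some distinguished vertex $v_*$, so I write $\Delta'=\join{v_*}{\Delta_0}$ with $\Delta_0$ the opposite face; note that $\Delta_0$ is itself a subsimplex of $\Delta$, of dimension $d'-1$. Suppose we perturb each vertex of $\Delta'$ by at most $\zeta$, producing $\Delta_0'$ and $v_*'$. The inductive hypothesis applied to $\Delta_0$ tells me that $\Delta_0'$ is still transverse to $\folconst{V}$.

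By \cref{lem:PerturbVertToNbhGr} applied to $\Delta_0$ (the required hypotheses $\zeta<\rmin(\Delta_0)/2$ and $\zeta<1/\maxcoeff(\Delta_0)$ will be ensured by making the global constant $C$ small and using $d$-dependent comparisons between the shape quantities of $\Delta_0$ and $\Delta$), one obtains
\[\dproj\bigl(\Gr(\Delta_0),\Gr(\Delta_0')\bigr)\leq C_2\,\zeta\,\maxcoeff(\Delta_0).\]
Both simplices lie in $B(v_*,r)$ with $r\leq 2\rmax(\Delta)$, so \cref{lem:semitransDiffSimplex} then yields that $\join{v_*}{\Delta_0'}$ is $\gamma'$-semitransverse at $v_*$ with
\[\gamma'\geq \delta-C'\,\zeta\,\maxcoeff(\Delta)\,\rmax(\Delta),\]
for some $C'$ depending only on $d$ and $n$. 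For the final perturbation $v_*\to v_*'$ to preserve transversality and non-degeneracy it suffices to have $\zeta\leq\gamma'$, which rearranges precisely to the third term $\delta/(\maxcoeff(\Delta)\rmax(\Delta))$ of the stated bound; the term $\delta$ enforces the trivial constraint $\zeta<\delta$ and the term $\rmin(\Delta)$ controls non-degeneracy. Choosing the global constant $C$ smaller than every constant arising above closes the induction.

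The hard part will be the bookkeeping of constants, and in particular bounding $\rmin$, $\rmax$, and $\maxcoeff$ of an arbitrary subsimplex $\Delta_0$ by the corresponding quantities on $\Delta$, up to multiplicative constants depending only on $d$ and $n$. A secondary subtlety is the regime $d'+k>n$, in which the projection characterizations of transversality used in \cref{lem:semitransDiffSimplex} do not apply directly; there I would invoke \cref{rem:TransDim} to reduce the transversality of $\Delta'$ to that of a suitable $(n-k)$-dimensional face, which is handled by an earlier step of the same induction.
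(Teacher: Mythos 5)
Your proof takes essentially the same route as the paper's: induction on the dimension of the subsimplex, using \cref{lem:PerturbVertToNbhGr} to convert a $\zeta$-perturbation of the vertices of the opposite face into a Grassmannian bound, \cref{lem:semitransDiffSimplex} to convert that into a controlled loss of semitransversality, and \cref{rem:TransDim} to reduce to dimensions at most $n-k$. The bookkeeping you flag as the hard part is actually easier than you fear: for any subsimplex $\Delta_0\subseteq\Delta$ (with inherited ordering) one has the \emph{exact} monotonicities $\rmin(\Delta_0)\geq\rmin(\Delta)$, $\rmax(\Delta_0)\leq\rmax(\Delta)$ and $\maxcoeff(\Delta_0)\leq\maxcoeff(\Delta)$, by restricting the relevant max/min to the subset of vertices or coefficients supported on $\Delta_0$, so no $d$-dependent multiplicative comparison constants are required.
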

\begin{proof}
	Let $\Delta$ be given as in the statement. We construct a sequence $(\zeta_d)_{d\in\N}$ by induction on $d$ such that each $d$-subsimplex of $\Delta$ remains transverse under simultaneous $\zeta_d$ perturbations of all of its vertices. Using \cref{rem:TransDim}, we see that we can take $\zeta_d = \zeta_{n-k}$ for $d\geq n-k$. 
	
	We start with the case $d=1$, for which we take $\zeta_1 = \delta/2$.

	We now let $d>1$. Let $\Delta_d$ be a $d$-subsimplex of $\Delta$. Then there exists a vertex $v$ and subsimplex $\Delta_{d-1}$ of $\Delta_d$ such that $\Delta_d$ is $\delta$-semitransverse in $v$ and such that $\join{v}{\Delta_{d-1}} = \Delta_d$. By induction, the simplex $\Delta_{d-1}$ is transverse, and remains transverse under simultaneous perturbations of all of its vertices by at most $\zeta_{d-1}$.
	
	By \cref{lem:semitransDiffSimplex} it follows for any $(d-1)$-simplex $\Delta'_{d-1}$ satisfying
	\begin{enumerate}[label=(\arabic*)]
		\item $\Delta'_{d-1} \subset B(p,2\rmax(\Delta))$,
		\item $\Delta'_{d-1}$ is transverse to $\folconst{V}$, and
		\item \label{it:PlaneGrDist} $\dproj(\Gr(\Delta_{d-1}),\Gr(\Delta'_{d-1})) < \frac{\delta}{4\rmax} $, 		
	\end{enumerate}
	that $\join{v}{\Delta'_{d-1}}$ is $\delta/2$-semitransverse in $v$. By \cref{lem:PerturbVertToNbhGr} we obtain that there exists $D_{d-1}$, only depending on $n$ and $d$, such that if we perturb all vertices of $\Delta_{d-1}$ by at most $\beta_{d-1} \coloneqq \delta / (4 D_{d-1} \maxcoeff(\Delta) \rmax(\Delta))$ simultaneously, the resulting simplex $\Delta'_{d-1}$ satisfies \cref{it:PlaneGrDist}. When we define $\zeta_d = \min\{\zeta_{d-1},\beta_{d-1},\rmin(\Delta)/2,\delta/2\}$, we see that $\Delta_d$ is transverse and remains so when we perturb all vertices simultaneously by at most $\zeta_d$.
	
	We now define $\zeta = \zeta_{n-k}$ , which boils down to 
	\[\zeta = \min\left(\frac\delta2,\frac{\rmin(\Delta)}2, \frac{\delta  }{ 4 \max_{d\in\{1,\dots,n-k\}}\{D_{d-1}\} \maxcoeff(\Delta) \rmax(\Delta) } \right).\qedhere\]

\end{proof}

\subsubsection{From semitransversality to \texorpdfstring{$\epsilon$}{ε}-transversality} \label{sec:STToETransPf}

Combining \cref{lem:semiVSQuantTrans,lem:inducSemiTransToPerturbAll} provides us with the following. Here we allow for a scaling by $L$ in the size of the simplex and in its semitransversality, which cancels out because of the quotient in \cref{lem:semiVSQuantTrans}.
\begin{corollary}\label{cor:inducSemiTransToTrans}
	Fix $D\in \N$ and $V\in\Gr(n,k)$. Let real numbers $\delta, \rmin,\rmax,\maxcoeff>0$ be given. Then there exists $\epsilon>0$ such that the following holds.
	
	For any linear $D$-simplex $\Delta$ satisfying that there exists $L \in \R_+$ such that
	\begin{itemize}
		\item $\rmin / L \leq \rmin(\Delta)\leq \rmax(\Delta) \leq \rmax / L$, 
		\item $\maxcoeff(\Delta) \leq \maxcoeff \cdot L$, and
		\item each subsimplex is $\delta/L$-semitransverse to $\folconst{V}$ in one of its vertices, 
	\end{itemize}
	it holds that each subsimplex of $\Delta$ is $\epsilon$-transverse to $\folconst{V}$. 
\end{corollary}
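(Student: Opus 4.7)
The plan is to directly chain Lemma~\ref{lem:inducSemiTransToPerturbAll} with Corollary~\ref{lem:semiVSQuantTrans}, and then verify that the composition of the two quantitative estimates is independent of the scale parameter $L$. This is essentially why the previous two subsections were structured exactly this way.

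First I would apply Lemma~\ref{lem:inducSemiTransToPerturbAll} to $\Delta$, with $\delta$ replaced by $\delta/L$; this is legal since by hypothesis each subsimplex of $\Delta$ is $(\delta/L)$-semitransverse to $\folconst{V}$ in one of its vertices. The lemma then furnishes a value of $\zeta$ comparable to
\[
\min\!\left\{\tfrac{\delta}{L},\ \rmin(\Delta),\ \tfrac{\delta/L}{\maxcoeff(\Delta)\,\rmax(\Delta)}\right\}
\]
such that each subsimplex of $\Delta$ remains transverse under simultaneous $\zeta$-perturbations of all of its vertices. The key calculation is to lower-bound $\zeta$ by a constant times $1/L$: the three bullet bounds on $\rmin(\Delta)$, $\rmax(\Delta)$ and $\maxcoeff(\Delta)$ give $\rmin(\Delta)\geq \rmin/L$ and $\maxcoeff(\Delta)\,\rmax(\Delta)\leq \maxcoeff\cdot\rmax$, which allow me to pull a common factor $1/L$ out of each of the three terms in the minimum. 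The outcome is $\zeta\geq C'/L$ for some $C'$ depending only on $\delta,\rmin,\rmax,\maxcoeff$ and on the ambient integers $D,n,k$.

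Next I would apply Corollary~\ref{lem:semiVSQuantTrans} to each subsimplex $\Delta'\subseteq \Delta$ separately: a simultaneous $\zeta$-perturbation of the vertices of $\Delta'$ extends, by keeping the remaining vertices of $\Delta$ fixed, to a $\zeta$-perturbation of all of $\Delta$'s vertices, so by the previous step $\Delta'$ remains transverse. The corollary then yields a $C\,\zeta/\rmax(\Delta')$-transversality estimate for $\Delta'$, with $C$ depending only on dimensions. Using $\rmax(\Delta')\leq \rmax(\Delta)\leq \rmax/L$, the two factors of $L$ cancel and the final $\epsilon := C\cdot C'/\rmax$ depends only on $\delta,\rmin,\rmax,\maxcoeff$, as required.

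I do not expect a genuine obstacle. The only delicate bookkeeping is the direction of the inequalities inside the minimum defining $\zeta$: since I need to lower-bound $\zeta$, the upper bound $\maxcoeff(\Delta)\leq \maxcoeff\cdot L$ must be used in the \emph{denominator} of the third term, and the lower bound on $\rmin(\Delta)$ in the second term — checking that all three inequalities push in the right direction so that the $L$'s actually cancel in the final step is the entirety of the content of the corollary.
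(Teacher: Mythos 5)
Your proposal is correct and follows exactly the same route as the paper: apply Lemma~\ref{lem:inducSemiTransToPerturbAll} (with $\delta/L$ in place of $\delta$) to get $\zeta$, observe that $\zeta$ scales like $1/L$ under the stated bounds, and then invoke Corollary~\ref{lem:semiVSQuantTrans} so that the $1/\rmax(\Delta')$ factor cancels the $1/L$. The paper's proof is terser (it simply says ``$\zeta$ depends linearly on $L$'' and leaves the cancellation implicit), but your explicit bookkeeping of the three terms in the minimum and of $\rmax(\Delta')\leq\rmax/L$ is precisely the content the paper is compressing.
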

\begin{proof}
	Given such a linear simplex $\Delta$, it follows by \cref{lem:inducSemiTransToPerturbAll} that $\Delta$ is transverse under simultaneous perturbations by at most $\zeta$ of all of its vertices. Here $\zeta $ depends linearly on $L$. Hence by \cref{lem:semiVSQuantTrans}, there exists $\epsilon $ such that $\Delta$ is $\epsilon$-transverse to $\fol$, where $\epsilon$ does not depend on $L$.
\end{proof}

\subsection{Not necessarily constant distributions} \label{sec:nonConstantDistr}
In previous sections, we fixed a constant foliation $\folconst{V}$ and deduced results about the transversality of a simplex to $\folconst{V}$. In this section we develop the tools to work with (not necessarily constant) distributions. We reduce their study to the study of constant foliations by the following observation: a distribution $\xi$ on $\R^n$ induces constant foliations $\folconst{\xi_x}$ for each $x\in \R^n$.

\subsubsection{Semitransversality when varying the foliation}
We start by considering the situation where a simplex is semitransverse to a constant foliation $\folconst{V_1}$, and ask whether we are then able to deduce something about the semitransversality of the simplex to another constant foliation $\folconst{V_2}$. It turns out that if the planes $V_1$ and $V_2$ are close enough, the simplex remains semitransverse, albeit less.

The proof below is similar to \cref{lem:semitransDiffSimplex}, where we use \cref{cor:StProjectSimplex} instead of \cref{cor:semitransEmptyIntQuotient}. Here we use again the distance $\dproj$ on the Grassmannian, derived from the operator norm (recall \cref{def:MetricGrProj}). 

\begin{lemma}\label{lem:semitransDiffFol}
	Let $\beta,\gamma,r>0$ and planes $V_1,V_2 \in \Gr(n,k)$ be given such that $\dproj(V_1,V_2) < \beta$. Let $p$ be a point in $\R^n$ and $\Delta $ a linear $d$-simplex in $\R^n$ such that
	\begin{itemize}
		\item $ \Delta \subset B(p,r)$,
		\item $\Delta$ is transverse to $\fol(V_i)$ for $i=1,2$, and
		\item $\join{p}{\Delta}$ is $\gamma$-semitransverse to $\folconst{V_1}$ in $p$.
	\end{itemize}
	  Then the simplex $\join{p}{\Delta}$ is $(\gamma - \beta r)$-semitransverse to $\folconst{V_2}$ in $p$. 
\end{lemma}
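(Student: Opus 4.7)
Plan:

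My approach is to mirror the proof of \cref{lem:semitransDiffSimplex}, with the roles of the simplex $\Delta$ and the foliation direction $V$ swapped: now $\Delta$ is fixed and it is the foliation that varies from $V_1$ to $V_2$. Accordingly, I use the characterization from \cref{cor:StProjectSimplex} (projection along the simplex) rather than \cref{cor:semitransEmptyIntQuotient} (projection along the foliation), so that the projection $\pi_\Delta : \R^n \to \R^n/\Gr(\Delta)$ does not change when replacing $V_1$ by $V_2$.

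After invoking \cref{rem:TransDim} to reduce to $d + 1 \leq n - k$, \cref{cor:StProjectSimplex} turns the hypothesis into
\[
d\bigl(\pi_\Delta(p),\, \pi_\Delta(V_1)\bigr) > \gamma.
\]
Setting $U := \Gr(\Delta)$ and $Q_U := I - P_U$, and unpacking $\pi_\Delta$ as $x \mapsto Q_U x$, this rewrites as
\[
\inf_{v_1 \in V_1}\bigl\|Q_U(p - v_1)\bigr\| > \gamma.
\]

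The main computation is then a direct estimate: for each $v_2 \in V_2$, I pair it with $v_1 := P_{V_1}(v_2) \in V_1$. Using $v_2 = P_{V_2}(v_2)$ and the hypothesis $\|P_{V_1} - P_{V_2}\| < \beta$, one obtains $\|v_1 - v_2\| \leq \beta \|v_2\|$. Combined with the $1$-Lipschitz property of $Q_U$, the triangle inequality gives
\[
\bigl\|Q_U(p - v_2)\bigr\| \geq \bigl\|Q_U(p - v_1)\bigr\| - \beta \|v_2\| > \gamma - \beta \|v_2\|.
\]
Translating back via \cref{cor:StProjectSimplex}, the lemma follows provided one can show $\inf_{v_2 \in V_2}\|Q_U(p - v_2)\| > \gamma - \beta r$.

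The main obstacle is therefore pushing the pointwise bound $> \gamma - \beta \|v_2\|$ through the infimum to yield the clean $>\gamma - \beta r$, since a priori the relevant $v_2$ need not have norm $\leq r$. The trick, mirroring the analogous step in the proof of \cref{lem:semitransDiffSimplex}, is to use that $V_2$ is a linear subspace (so $0 \in V_2$) together with $\Delta \subset B(p,r)$: after translating so that a vertex $v_0 \in \Delta$ sits at the origin one has $\|p\| \leq r$, and then $\inf_{v_2 \in V_2}\|v_2 - p\| \leq \|0 - p\| \leq r$, which converts the $\beta\|v_2\|$ error term into the desired $\beta r$.
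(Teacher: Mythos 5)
Your route diverges from the paper's: you project along the simplex via \cref{cor:StProjectSimplex}, whereas the paper's proof projects along the foliation via \cref{cor:semitransEmptyIntQuotient}. That choice is what sinks the argument. Both characterizations, after translating a vertex of $\Delta$ to the origin, express the semitransversality as $d(p, V_i + U)$ with $U = \Gr(\Delta)$; but the paper realizes this number as $\inf_{v\in\aspan(\Delta)}\|v - P_i v\|$, so that the triangle-inequality error term is $\beta\|v\|$ for $v$ ranging over $\aspan(\Delta)$, a set that meets $B(p,r)$. In your formulation the error term is instead $\beta\|v_2\|$ for $v_2$ ranging over all of $V_2$, with no a priori size control on the relevant $v_2$.

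Your final ``trick'' does not supply that control. The observation $\inf_{v_2\in V_2}\|v_2-p\|\leq r$ concerns the minimizer of $\|v_2-p\|$, but your pointwise bound needs $\|v_2\|\leq r$ for those $v_2$ at which $\|Q_U(p-v_2)\|$ is nearly minimal, and these are generally different points. Indeed $\inf_{v_2\in V_2}\|Q_U(p-v_2)\|=d(p,V_2+U)$ is realized (up to $\epsilon$) by writing the orthogonal projection of $p$ onto $V_2+U$ as $v_2+u$ with $v_2\in V_2$, $u\in U$; since $V_2\cap U=\{0\}$ the splitting is unique, but when $V_2$ and $U=\Gr(\Delta)$ are close to parallel --- a situation the qualitative transversality hypotheses do not exclude --- the two components of a short vector in $V_2+U$ can have arbitrarily large norm. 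So $\|v_2\|$ is uncontrolled and the estimate degenerates; this cannot be patched inside the $\pi_\Delta$ picture without imposing a quantitative bound on the angle between $\Gr(\Delta)$ and $V_2$ that the statement does not provide. The repair is to follow the paper's choice of $\pi_{V_i}$, so the error variable lives in $\aspan(\Delta)$ and is trivially bounded by $r$. Be warned, though, that even there the step $\inf_v\bigl(\|v-P_2v\|+\|P_2v-P_1v\|\bigr)\leq d(\aspan(\Delta),V_2)+\beta\inf_v\|v\|$ pushes an infimum through a sum whose two terms are not minimized at the same $v\in\aspan(\Delta)$, and needs more justification than the one-line chain the paper gives.
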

\begin{proof}
	By \cref{rem:TransDim} we can assume that $d+1 \leq n-k$ and without loss of generality we also assume that $p = 0$. We denote by $\pi_i:\R^n \to \R^n / V_i $ the projections for $i=1,2$. Throughout the proof we identify the $V_i$ with their respective, associated linear subspaces of $\R^n$. 
	Additionally, we define $P_i:\R^n \to \R^n$ as the orthogonal projection onto $V_i$. It then holds by assumption that $\|P_1-P_2\|<\beta$.
	
	Since $\join{p}{\Delta}$ is $\gamma$-semitransverse to $\folconst{V_1}$ in $p$, we have that 
	\[ B(0,\gamma) \cap \pi_1(\aspan(\Delta)) = \emptyset \]
	by \cref{cor:semitransEmptyIntQuotient}. This implies that $d(\aspan(\Delta),V_1) > \gamma$, which should be interpreted as the distance between two subsets of $\R^n$. We observe that 
	\begin{align*}
		d(\aspan(\Delta),V_1) = \inf_{v \in \aspan(\Delta)} \|v-P_1 v\| &\leq \inf_{v \in \aspan(\Delta)} \left( \|v-P_2 v\| + \|P_2 v-P_1 v\|\right) \\  &\leq d(\aspan(\Delta),V_2) + \beta \inf_{v\in \aspan(\Delta)} \|v\|,
	\end{align*}
	which implies that \[d(\aspan(\Delta),V_2) \geq d(\aspan(\Delta),V_1) - \beta \inf_{v\in \aspan(\Delta)} \|v\| > \gamma - \beta r. \] Hence the claim follows.
\end{proof}

\subsubsection{Making distributions (almost) constant}
To reduce dealing with transversality to an arbitrary distribution to an almost constant distribution, we state the following. We obtain in particular that, if we consider a set of simplices with small enough diameter, the distribution is almost constant on each simplex. 
\begin{lemma} \label{lem:folAlmostConst}
	Let $\beta>0$ be given, together with a distribution $\xi$ on $\R^n$ and a compact subset $A \subset \R^n$. Then there exists $r>0$ such that $\dproj(\xi_x,\xi_y)<\beta$ for all $x,y\in A$ with $d(x,y)<r$. 
\end{lemma}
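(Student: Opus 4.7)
The plan is to observe that this statement is essentially uniform continuity of the Gauss map $A \to \Gr(n,k)$ defined by $x \mapsto \xi_x$, pulled back from the metric $\dproj$ on the Grassmannian.

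First I would argue that the map $\xi: \R^n \to \Gr(n,k)$ sending $x$ to $\xi_x$ is continuous. Since $\xi$ is a distribution (a smooth subbundle of $T\R^n$ of constant rank), around each point one has a local smooth frame $X_1,\dots,X_k$ for $\xi$, and then $\xi_x = \lspan(X_1(x),\dots,X_k(x))$. Locally this data identifies $\xi_x$ with an element of a chart $\SU_V \subset \Gr(n,k)$ as in \cref{sec:GrCharts}, and the resulting map into $\R^{k(n-k)}$ is smooth in $x$. Composing with any of the charts shows that $x \mapsto \xi_x$ is continuous with respect to the standard topology on $\Gr(n,k)$, which is the topology induced by $\dproj$ (see \cref{sec:GrMetrics}).

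Then I would invoke the Heine–Cantor theorem: any continuous map from a compact metric space to a metric space is uniformly continuous. Applied to the restriction $\xi|_A: A \to (\Gr(n,k),\dproj)$ with $A$ compact, this immediately yields the desired $r>0$ for the given $\beta$.

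I do not expect any serious obstacle; the only mild subtlety is verifying that $x \mapsto \xi_x$ is continuous as a map into $(\Gr(n,k),\dproj)$, but this reduces to the observation that the projection operator $P_x:\R^n\to\xi_x$ depends continuously on $x$ (it can be written explicitly via a local frame $X_1(x),\dots,X_k(x)$ and Gram–Schmidt, or as $P_x = M_x(M_x^T M_x)^{-1}M_x^T$ where $M_x$ is the matrix with columns $X_i(x)$), so $\dproj(\xi_x,\xi_y) = \|P_x - P_y\|$ is a continuous function of $(x,y)$. Thus the full argument is essentially three lines once the continuity of the Gauss map is recorded.
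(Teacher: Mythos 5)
Your proof is correct and rests on the same two ingredients as the paper's: continuity of the Gauss map $x \mapsto \xi_x$ and compactness of $A$. The paper phrases the compactness step as a preimage/open-cover argument (``the minimum radius $r_{\beta,z}$ over $z\in A$ is attained''), whereas you invoke Heine--Cantor directly; your formulation is actually the cleaner one, since the paper's claim that the minimum is attained would need a word about lower semicontinuity of $z\mapsto r_{\beta,z}$ or a Lebesgue-number argument, which Heine--Cantor packages away. Your explicit verification of the continuity of $x\mapsto \xi_x$ into $(\Gr(n,k),\dproj)$ via local frames and the projection formula $P_x = M_x(M_x^T M_x)^{-1}M_x^T$ is a nice touch that the paper leaves implicit.
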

\begin{proof}
	Let $\beta>0$ be given, fix $z\in A$ and consider the open ball $\Bproj(\xi_z,\beta) \subset \Gr(n,k)$. Then its preimage $\xi^{-1} (\Bproj(\xi_z,\beta)) \subset \R^n$ is open and hence contains a ball $B(z,r_{\beta,z})$ for some $r_{\beta,z}>0$. By compactness of $A$ the minimum radius $r$ when varying $z$ is attained.
\end{proof}

\subsubsection{Consequences for general position}

In \cref{sec:STToETrans} we considered the situation where a simplex and each of its subsimplices is $\delta$-transverse with respect to a constant foliation. We now deduce what happens to the semitransversality when we replace the constant foliation by a distribution that is not necessarily constant. 

\begin{corollary} \label{cor:semitransAlmostConstant}
	Let $\delta>0$ be given, together with a distribution $\xi$ on $\R^n$ of rank $k$. Then there exists $\rmax>0$ such that the following holds.
	
	For any linear simplex $\Delta$ in $\R^n$ such that
	\begin{itemize}
		\item $\rmax(\Delta)\leq \rmax$, and
		\item each subsimplex $\Delta'$ of $\Delta$ is $\delta$-semitransverse to $\fol(\xi_{x_1})$ in a vertex $v_{\Delta'} \in \Delta'$ for some $x_1\in B(\Delta,\rmax)$,
	\end{itemize}
	it holds that each subsimplex $\Delta'$ of $\Delta$ is $\delta/2$-semitransverse to $\fol(\xi_{x_2})$ in $v_{\Delta'}$ for all $x_2\in B(\Delta,\rmax)$. 	
\end{corollary}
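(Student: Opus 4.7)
The plan is to compare, for each subsimplex $\Delta'$ of $\Delta$, the constant foliations $\folconst{\xi_{x_1}}$ and $\folconst{\xi_{x_2}}$ via Lemma~\ref{lem:semitransDiffFol}, using Lemma~\ref{lem:folAlmostConst} to force the Grassmannian distance $\dproj(\xi_{x_1},\xi_{x_2})$ to be arbitrarily small once $\rmax$ is small.

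Fix a subsimplex $\Delta' = \join{v_{\Delta'}}{\Delta''}$, where $\Delta''$ is the face opposite $v_{\Delta'}$. By Corollary~\ref{cor:semitransEmptyIntQuotient}, the hypothesis that $\Delta'$ is $\delta$-semitransverse to $\folconst{\xi_{x_1}}$ in $v_{\Delta'}$ implies in particular that $\Delta''$ is transverse to $\folconst{\xi_{x_1}}$. Since transversality is an open condition on the relevant Grassmannian and $\xi$ is continuous, by shrinking $\rmax$ from the outset we can also ensure that $\Delta''$ is transverse to $\folconst{\xi_{x_2}}$ for every $x_2 \in B(\Delta,\rmax)$. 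Moreover $\Delta'' \subset B(v_{\Delta'}, \rmax(\Delta)) \subset B(v_{\Delta'}, \rmax)$. With these ingredients, Lemma~\ref{lem:semitransDiffFol}, applied with $\gamma = \delta$, $r = \rmax$ and $\beta = \dproj(\xi_{x_1},\xi_{x_2})$, yields that $\Delta'$ is $(\delta - \beta \rmax)$-semitransverse to $\folconst{\xi_{x_2}}$ in $v_{\Delta'}$.

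It therefore suffices to choose $\rmax$ so small that $\dproj(\xi_{x_1},\xi_{x_2}) \cdot \rmax \leq \delta/2$ whenever $x_1, x_2 \in B(\Delta, \rmax)$. Since $\rmax(\Delta) \leq \rmax$, the triangle inequality gives $d(x_1, x_2) \leq 3\rmax$. By the uniform continuity of $\xi$ on a compact region containing the simplices of interest, as furnished by Lemma~\ref{lem:folAlmostConst}, the product $\rmax \cdot \dproj(\xi_{x_1},\xi_{x_2})$ tends to zero with $\rmax$, so one can pick $\rmax$ making it at most $\delta/2$.

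The only mildly subtle point is this last quantitative estimate: one needs $\rmax \cdot \omega_\xi(3\rmax) \leq \delta/2$ for a modulus of continuity $\omega_\xi$ of $\xi$, which is automatic since $\omega_\xi(r) \to 0$ as $r \to 0$ while $\rmax \to 0$ is itself small. The preservation of transversality of $\Delta''$ when switching from $\folconst{\xi_{x_1}}$ to $\folconst{\xi_{x_2}}$ is likewise immediate from continuity once $\rmax$ is small enough that $\xi_{x_1}$ and $\xi_{x_2}$ lie in a common Grassmannian chart.
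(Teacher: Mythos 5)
Your proof follows the same route as the paper's: both reduce the statement to Lemma~\ref{lem:semitransDiffFol} (comparing semitransversality under a small change of foliation) combined with Lemma~\ref{lem:folAlmostConst} (making $\dproj(\xi_{x_1},\xi_{x_2})$ small in terms of $\rmax$), and both resolve the self-referential choice of $\rmax$ in essentially equivalent ways (the paper forces $\rmax<1$ so that $\beta<\delta/2<\delta/(2\rmax)$; you control the product $\rmax\cdot\dproj$ directly).

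There is, however, one step that is not airtight. Lemma~\ref{lem:semitransDiffFol} requires as a hypothesis that the face $\Delta''$ be transverse to \emph{both} $\folconst{\xi_{x_1}}$ \emph{and} $\folconst{\xi_{x_2}}$. Transversality to $\folconst{\xi_{x_1}}$ is immediate: $\Delta''$ is itself a subsimplex of $\Delta$, hence $\delta$-semitransverse, hence transverse. For transversality to $\folconst{\xi_{x_2}}$ you appeal to ``transversality is an open condition'' and ``$\xi_{x_1},\xi_{x_2}$ lie in a common Grassmannian chart,'' but this is not enough as stated: the radius of the open neighborhood witnessing openness depends on the \emph{amount} of transversality of the particular $\Delta''$ at hand, and the hypotheses of the corollary do not by themselves furnish a uniform positive lower bound on that amount (no bounds on $\rmin(\Delta)$, $\maxcoeff(\Delta)$ are assumed, so Corollary~\ref{cor:inducSemiTransToTrans} does not apply). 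A point in a single chart can fail to be transverse. The clean fix is to run the argument by induction on $\dim\Delta'$: the $(d-1)$-face $\Delta''$ is a subsimplex of $\Delta$, so by the inductive hypothesis it is already $\delta/2$-semitransverse (in one of its vertices) to $\folconst{\xi_{x_2}}$, and hence in particular transverse to $\folconst{\xi_{x_2}}$ — this is exactly the hypothesis Lemma~\ref{lem:semitransDiffFol} needs, with no extra continuity argument. I note that the paper's own proof glosses over the transversality-to-$\folconst{\xi_{x_2}}$ hypothesis entirely, so this is as much a remark about the paper as about your write-up; but since you noticed the issue and tried to address it, the openness phrasing should be replaced by the inductive one.
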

\begin{proof}
	Since semitransversality is stronger than transversality, we know in particular that each subsimplex $\Delta'$ of $\Delta$ is transverse to $\folconst{\xi_{x_1}}$. Hence we can apply \cref{lem:semitransDiffFol}, which tells us that if $\xi_{x_2} \in \Gr(n,k)$ satisfies that $\dproj(\xi_{x_1},\xi_{x_2}) < \delta/(2\rmax)$, the simplex $\Delta'$ is $\delta/2$-semitransverse to $\folconst{\xi_{x_2}}$ in $V_{\Delta'}$.  \Cref{lem:folAlmostConst} provides us with $\rmax\in(0,1)$ such that $\dproj(\xi_{x_1},\xi_{x_2}) < \delta/2$ holds, which implies that $\dproj(\xi_{x_1},\xi_{x_2}) < \delta/(2\rmax)$, for all $x_1,x_2 \in \R^n$ such that $d(x_1,x_2) \leq 3\rmax$. This proves the claim. 
\end{proof}

In particular, we obtain that each subsimplex $\Delta'$ of $\Delta$ is transverse to $\xi_x$ for each $x\in \Delta$. This is equivalent to being in general position, and hence we obtain the following.

\begin{corollary} \label{cor:genPosAlmostConstant}
	Let $\delta>0$ be given, together with a distribution $\xi$ on $\R^n$ of rank $k$.  Then there exists $\rmax>0$ such that the following holds.
	
	For any linear simplex $\Delta$ in $\R^n$ such that
	\begin{itemize}
		\item $\rmax(\Delta)\leq \rmax$, and 
		\item each subsimplex $\Delta'$ of $\Delta$ is $\delta$-semitransverse to $\fol(\xi_{x})$ in a vertex $v_{\Delta'} \in \Delta'$ for some $x\in B(\Delta,\rmax)$,
	\end{itemize}
	it holds that $\Delta$ is in general position with respect to $\xi$. 	
\end{corollary}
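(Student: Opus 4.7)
The plan is to reduce directly to \cref{cor:semitransAlmostConstant}. Concretely, I would choose the $\rmax$ in the current statement to be exactly the value produced by \cref{cor:semitransAlmostConstant} applied with the same $\delta$ and $\xi$. Then the two bullet-hypotheses of \cref{cor:genPosAlmostConstant} match those of \cref{cor:semitransAlmostConstant} verbatim, so the previous corollary immediately gives that every subsimplex $\Delta'$ of $\Delta$ is $\delta/2$-semitransverse to $\folconst{\xi_x}$ in the vertex $v_{\Delta'}$ for every $x \in B(\Delta,\rmax)$.

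Since $\Delta \subset B(\Delta,\rmax)$, this conclusion in particular holds for every $x\in\Delta$. Unpacking \cref{def:semitrans}, $\delta/2$-semitransversality of $\Delta'$ in $v_{\Delta'}$ means that all perturbations $\join{v'}{(\Delta' \setminus v_{\Delta'})}$ with $v' \in B(v_{\Delta'},\delta/2)$ are non-degenerate and transverse; applying this with the unperturbed choice $v' = v_{\Delta'}$ we see that $\Delta'$ itself is transverse to $\folconst{\xi_x}$. Thus every subsimplex $\Delta'$ of $\Delta$ is transverse to $\folconst{\xi_x}$ for every $x\in\Delta$.

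Comparing with \cref{def:ThGenPos}, this is precisely the definition of $\Delta$ being in general position with respect to $\xi$, which concludes the argument. There is essentially no obstacle here: the quantitative work has been absorbed by \cref{cor:semitransAlmostConstant} (and ultimately by \cref{lem:semitransDiffFol} together with \cref{lem:folAlmostConst}), and the only remaining observation is the tautology that (quantitative) semitransversality in a vertex dominates (qualitative) transversality, so semitransversality against each infinitesimal model $\folconst{\xi_x}$ at every $x\in\Delta$ already yields general position.
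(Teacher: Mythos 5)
Your proof is correct and takes essentially the same route as the paper, which does not even display a formal proof but simply notes before the statement that \cref{cor:semitransAlmostConstant} yields transversality of each subsimplex $\Delta'$ to $\folconst{\xi_x}$ for all $x\in\Delta$, which is the definition of general position. Your write-up just makes explicit the two small steps the paper leaves implicit: that $\Delta\subset B(\Delta,\rmax)$ and that $\delta/2$-semitransversality in a vertex implies transversality.
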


\section{Perturbing one vertex} \label{sec:PerturbVertex}
The goal for this section is, given a point $p\in\R^n$ and a set $\SD$ of linear simplices, to find a point $p'$ close to $p$ such that the simplices spanned by $p'$ and $\SD$ are transverse. Hence we introduce the following notation: considering a point $p$ in $\R^n$ and a set $\SD$ of linear simplices in $\R^n \setminus\{p\}$, we denote by $\langle p,\SD \rangle$, the set of linear simplices consisting of $\join{p}{\Delta}$ for all $\Delta \in \SD$. To quantify the transversality of the simplices in $\langle p',\SD\rangle$, we bound their semitransversality in $p'$, which we can deduce from how we choose $p'$.

\begin{remark}
	Throughout this section we work in the following setting: We fix natural numbers $k<n$ and foliate $\R^n$ by copies of $\R^k$ as $\R^n = \R^k \times \R^{n-k}$. We denote this foliation by $\fol$. We let $\pi : \R^n \to \R^n / \fol \simeq \R^{n-k}$ be the projection on the last $n-k$ coordinates. This implies in particular that balls of radius $r$ in $\R^n$ will be mapped under $\pi$ to balls of radius $r$ in $\R^{n-k}$. 
	
	By applying a rotation, the results in this section can be generalized to arbitrary constant foliations $\folconst{V}$ determined by a plane $V \in \Gr(n,k)$. The quotient $\pi$ then turns into $\pi_V : \R^n \to \R^n/V $. We leave the details to the reader.
\end{remark}

\begin{remark}
	This section contains three subsections. In each of which we find $p'$ such that the simplices in $\langle p',\SD \rangle$ are semitransverse, in such a way that the results in each subsection improve upon the results in the previous subsection. We discuss these improvements in the subsections themselves, but here we want to point out that these three subsections are structured similarly: We recall that \cref{cor:semitransEmptyIntQuotient} allows us to check semitransversality of $\join{p}{\Delta}$ in $\R^n$, by checking whether $p$ and $\Delta$ intersect after projecting to $\R^{n-k}$ via $\pi$. Hence each section starts with a result (\cref{lem:deltaPlanes,lem:deltaIndepPlanes,lem:deltaIndepPlanesMultiple}) on points avoiding planes after projecting. Next, we use this to show (\cref{lem:transSimplicesDep,lem:transSimplices,lem:transSimplicesFace}) that we can find $p'$ such that simplices in $\langle p',\SD\rangle$, of a given dimension $d$, are semitransverse in $p'$. The third result (\cref{cor:transPolyDep,cor:transPoly,pro:transPolyInducFace}) uses induction to show that 
	 $p'$ exists such that simplices in $\langle p',\SD\rangle$ of \emph{all} dimensions are semitransverse in $p'$. 
\end{remark}

\subsection{Sets of simplices} \label{sec:polyhedraperturb}
\Cref{cor:semitransEmptyIntQuotient} allows us to check whether a \emph{single} simplex is semitransverse. We now use \cref{lem:transSimplex} to make a \emph{set} of linear simplices semitransverse. That is, we assume that we are given a point $p$ and a set $\SD$ of simplices, and we want to perturb $p$ to $p'$ such that the simplices spanned by $p'$ and the simplices in $\SD$ are transverse. By \cref{lem:transSimplex} it suffices to choose $p'$ such that it does not lie in the affine span of any of the simplices in $\SD$ after taking the projection. This is always possible by the following lemma.

\begin{lemma} \label{lem:deltaPlanes}
	Let $\epsilon>0$ and $d,C \in \N$ be given such that $d<n$. Then for any $p\in \R^{n}$ and any set of $d$-planes $\{F_1,\dots,F_C\}$ in $\R^{n}$, there exists $\delta>0$ and $p'\in\R^{n}$ satisfying that $B(p',\delta) \subset B(p,\epsilon)$ and $B(p',\delta) \cap F_i = \emptyset$ for all $i=1,\dots,C$.
\end{lemma}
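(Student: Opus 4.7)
The plan is to exploit the fact that each $F_i$ is a proper closed affine subspace of $\R^n$ (since $d<n$), hence nowhere dense, and a finite union of nowhere dense closed sets is still nowhere dense. Concretely, I will produce $p'$ in the complement of $\bigcup_{i=1}^C F_i$ inside $B(p,\epsilon)$, and then use the fact that each $F_i$ is closed to thicken $p'$ to a small ball avoiding every $F_i$ and contained in $B(p,\epsilon)$.

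First I would argue existence of $p'$. The set $B(p,\epsilon)$ is a nonempty open subset of $\R^n$, while each $F_i$, being a $d$-dimensional affine subspace with $d<n$, has empty interior in $\R^n$; its complement $\R^n \setminus F_i$ is therefore open and dense. The intersection of the finitely many open dense sets $\R^n \setminus F_i$ is open and dense, so it meets $B(p,\epsilon)$. Pick any $p'$ in this intersection. (Alternatively, one can avoid invoking density and argue directly by induction on $C$: an open ball cannot be contained in a finite union of affine hyperplanes of codimension $\geq 1$, so at each step we can shrink to a sub-ball disjoint from one more $F_i$.)

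Next I would produce $\delta$. Since $p' \in B(p,\epsilon)$, the quantity $\epsilon - |p-p'|$ is strictly positive. Since each $F_i$ is closed and $p' \notin F_i$, the distance $d(p',F_i)$ is strictly positive for every $i \in \{1,\dots,C\}$. Setting
\[
\delta = \tfrac{1}{2}\min\!\left\{\, \epsilon - |p-p'|,\; d(p',F_1),\; \dots,\; d(p',F_C)\,\right\} > 0,
\]
the triangle inequality gives $B(p',\delta) \subset B(p,\epsilon)$, and by definition of the distance to $F_i$, the open ball $B(p',\delta)$ is disjoint from every $F_i$.

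There is no real obstacle here: the only thing to be a little careful about is that $d$-planes with $d<n$ really do have empty interior, which is immediate from linear algebra, and that the finiteness of the collection $\{F_1,\dots,F_C\}$ is what allows the simultaneous positive lower bound on distances. The statement would fail without $d<n$ (if some $F_i$ equals $\R^n$) or without finiteness (a countable dense family of hyperplanes could leave no room).
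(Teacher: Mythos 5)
Your proof is correct and is essentially the same as the paper's, which disposes of the lemma in one line by observing that each $F_i$ has Lebesgue measure zero in $B(p,\epsilon)$ and there are only finitely many of them. You phrase the "smallness" topologically (nowhere dense / open dense complement) rather than measure-theoretically, but the underlying observation is identical; you also spell out the choice of $\delta$ via closedness of the $F_i$, which the paper leaves implicit.
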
 
\begin{proof}
	This follows from the observation that each plane $F_i$ has zero measure in the ball $B(\pi(p),\epsilon)$, and there are only $C$ of these planes.
\end{proof}

Using the above result, we first achieve in \cref{lem:transSimplicesDep} that all $d$-simplices in $\langle p',\SD \rangle$ are transverse for a fixed dimension $d$, after which we extend this to any dimension in \cref{cor:transPolyDep}. Hence, with this generalization in mind, the set $\SD$ of linear simplices could contain simplices of any dimension in both statements. We have illustrated \cref{lem:transSimplicesDep} in \cref{fig:transedgesball}.

\begin{lemma}  \label{lem:transSimplicesDep}
	Let $\epsilon>0$ and $d,C \in \N_{>0}$ be given, together with a point $p\in \R^n$ and a set $\SD$ of $C$ linear simplices in $\R^n$ that are transverse to $\fol$. Then, there exists $\delta>0$ and $p'\in B(p,\epsilon)$ such that 
	\begin{itemize}
		\item $B(p',\delta) \subset B(p,\epsilon)$, and
		\item each $d$-simplex in $\langle p', \SD \rangle$ is $\delta$-semitransverse to $\fol$ in $p'$.
	\end{itemize}
\end{lemma}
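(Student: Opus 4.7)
The plan is to reduce the problem to \cref{lem:deltaPlanes} applied one dimension lower, via the projection $\pi \colon \R^n \to \R^{n-k}$. The key observation is the characterization of semitransversality in \cref{cor:semitransEmptyIntQuotient}: for a $(d-1)$-simplex $\Delta \in \SD$ (transverse to $\fol$ by hypothesis) with $d-1 < n-k$, the $d$-simplex $\join{p'}{\Delta}$ is $\delta$-semitransverse to $\fol$ in $p'$ if and only if $B(\pi(p'), \delta) \cap \pi(\aspan \Delta) = \emptyset$ in $\R^{n-k}$. By \cref{lem:TransPlanesQuotient}, $\pi(\aspan \Delta)$ is a genuine $(d-1)$-plane in $\R^{n-k}$, so the desired condition on $p'$ is precisely of the type handled by \cref{lem:deltaPlanes}.

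First I restrict to the subcollection $\SD' \subseteq \SD$ of simplices of dimension exactly $d-1$, which has at most $C$ elements and captures all $d$-simplices appearing in $\langle p', \SD \rangle$. In the main case $d \leq n-k$, I apply \cref{lem:deltaPlanes} inside $\R^{n-k}$, using the point $\pi(p)$, the collection of $(d-1)$-planes $\{\pi(\aspan \Delta) \mid \Delta \in \SD'\}$, and radius $\epsilon$. This produces $q' \in B(\pi(p), \epsilon)$ and $\delta > 0$ such that $B(q', \delta) \subset B(\pi(p), \epsilon)$ and $B(q', \delta)$ avoids each projected affine span. Lifting along the orthogonal complement of $\fol$, I set $p' := p + (q' - \pi(p))$, so that $\pi(p') = q'$ and $d(p, p') = d(\pi(p), q')$. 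Consequently $B(p', \delta) \subset B(p, \epsilon)$, and since $\pi$ is $1$-Lipschitz, $\pi(B(p', \delta)) \subseteq B(q', \delta)$; the $\delta$-semitransversality of each $d$-simplex in $\langle p', \SD \rangle$ then follows directly from \cref{cor:semitransEmptyIntQuotient}.

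The remaining range $d > n-k$ should be essentially automatic: each $\Delta \in \SD'$ has dimension $d-1 \geq n-k$ and, being transverse to $\fol$, already contains an $(n-k)$-subsimplex transverse to $\fol$ by \cref{rem:TransDim}; this subsimplex is also a face of $\join{p''}{\Delta}$ for every $p''$, so transversality of the join reduces to non-degeneracy, i.e.\ $p'' \notin \aspan \Delta$. One therefore applies \cref{lem:deltaPlanes} directly in $\R^n$ to the $(d-1)$-planes $\aspan \Delta$ (note $d-1 \leq n-1$) to obtain $p'$ and $\delta$ with $B(p', \delta)$ disjoint from every $\aspan \Delta$, which is exactly what semitransversality requires in this dimension range. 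I expect no substantial technical obstacle: the only thing to keep track of is that the lift from $q'$ to $p'$ respects $B(p', \delta) \subset B(p, \epsilon)$, which is automatic because the lift is taken along $\fol$-perpendicular directions, so the lifted distance equals the projected distance.
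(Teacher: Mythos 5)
Your proof is correct and follows essentially the same approach as the paper: project to $\R^{n-k}$, apply \cref{lem:deltaPlanes} to the projected affine spans, and lift the resulting point back to $\R^n$ along the foliation, then invoke \cref{cor:semitransEmptyIntQuotient} (equivalently \cref{lem:transSimplex}). The only difference is that you spell out the range $d>n-k$ explicitly, observing that transversality of the join is automatic and only non-degeneracy needs to be arranged by applying \cref{lem:deltaPlanes} directly in $\R^n$, whereas the paper dismisses this case at the outset with a terse appeal to \cref{rem:TransDim}; your version is a bit more careful there, but it is the same argument.
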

\begin{figure}[h]
	\includegraphics[width=0.5\textwidth,page=5]{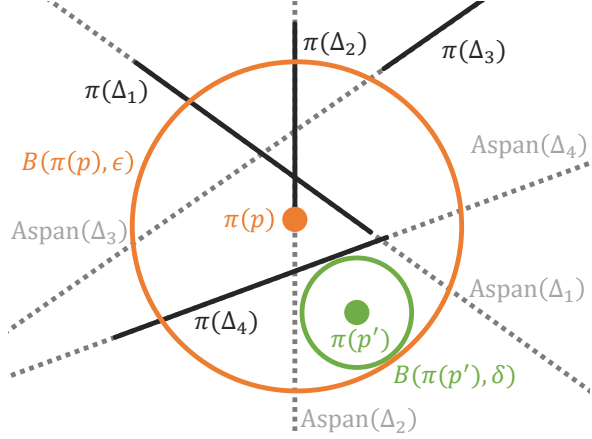}
	\centering
	\caption{A sketch of the proof of \cref{lem:transSimplicesDep} after projecting to $\R^{n-k}$. In orange we have indicated $\pi(p)$ and the $\epsilon$-ball around it. In green we see the point $\pi(p')$ with a $\delta$-neighborhood that does not intersect any of the $\aspan(\Delta_i)$ and is contained in the orange ball.} \label{fig:transedgesball}
\end{figure}

\begin{proof}
	Since, by \cref{rem:TransDim}, the transversality of the $(n-k)$-simplices implies the transversality of the higher dimensional simplices in $\langle p',\SD \rangle$, we can assume that $d\leq n-k$.
	
	By \cref{lem:deltaPlanes} there exist $\hat{p} \in B(\pi(p),\epsilon) \subset \R^{n-k}$ and $\delta>0$ such that $B(\hat{p},\delta) \subset B(\pi(p),\epsilon)$ and $B(\hat{p},\delta) \cap \pi(\aspan(\Delta)) = \emptyset$ for all $(d-1)$-simplices $\Delta$ in $\SD$. 
	 We now define $p'\in\R^n$ as $p' = (p_1,\dots,p_k,\hat{p}_1,\dots,\hat{p}_{n-k})$. Then we have $d(p',p) = d(\hat{p},\pi(p)) < \epsilon$. For the $\delta$-semitransversality in $p'$, we note that if $p'' \in B(p',\delta)$, this implies that $\pi(p'') \in B(\hat{p},\delta)$ and hence by \cref{lem:transSimplex} the claim follows.
\end{proof}

To make the simplices in $\langle p',\SD \rangle$ of any dimension transverse, we repeatedly apply \cref{lem:transSimplicesDep} and obtain the following corollary.

\begin{corollary} \label{cor:transPolyDep}
	Let $\epsilon>0$ and $C\in \N$ be given, together with a point $p\in \R^n$ and a finite set $\SD$ of linear simplices in $\R^n$ that are transverse to $\fol$. Then, there exists $\delta>0$ and $p'\in B(p,\epsilon)$ such that
	\begin{itemize}
		\item $B(p',\delta) \subset B(p,\epsilon)$, and
		\item each simplex in $\langle p',\SD\rangle$ is $\delta$-semitransverse to $\fol$ in $p'$.
	\end{itemize}  
\end{corollary}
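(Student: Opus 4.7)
The plan is to prove this corollary by induction on the dimension $d$, iteratively applying \cref{lem:transSimplicesDep} dimension by dimension, while controlling how much we are allowed to perturb so as not to destroy the semitransversality already achieved in lower dimensions. Set $D = \max_{\Delta\in\SD}\dim\Delta + 1$, which bounds the dimension of simplices in $\langle p', \SD\rangle$. Since $\SD$ is finite, $D$ is finite.

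First I would construct a finite sequence of points $p = p^{(0)}, p^{(1)}, \dots, p^{(D)}$ and radii $\epsilon = \delta_0 \geq \delta_1 \geq \dots \geq \delta_D > 0$ satisfying the following invariant at step $d$: (i) $B(p^{(d)}, \delta_d) \subset B(p, \epsilon)$, and (ii) for every $d' \leq d$ and every $d'$-simplex $\sigma$ in $\langle p^{(d)}, \SD\rangle$, $\sigma$ is $\delta_d$-semitransverse to $\fol$ in $p^{(d)}$. The base case $d = 0$ is vacuous since every simplex in $\langle p^{(0)}, \SD\rangle$ has dimension at least $1$.

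For the inductive step, apply \cref{lem:transSimplicesDep} with input point $p^{(d-1)}$, radius $\delta_{d-1}/2$, the set $\SD$ and fixed dimension $d$. This yields a point $p^{(d)} \in B(p^{(d-1)}, \delta_{d-1}/2)$ and a number $\tilde\delta_d > 0$ so that $B(p^{(d)}, \tilde\delta_d) \subset B(p^{(d-1)}, \delta_{d-1}/2)$ and every $d$-simplex in $\langle p^{(d)}, \SD\rangle$ is $\tilde\delta_d$-semitransverse in $p^{(d)}$. Set $\delta_d = \min(\tilde\delta_d, \delta_{d-1}/2)$. By the triangle inequality, $B(p^{(d)}, \delta_d) \subset B(p^{(d-1)}, \delta_{d-1})$, so (i) is preserved. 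The crucial step is verifying (ii) for indices $d' < d$: by the inductive hypothesis, for every $p'' \in B(p^{(d-1)}, \delta_{d-1})$ and every $(d'-1)$-simplex $\Delta \in \SD$, the simplex $\join{p''}{\Delta}$ is non-degenerate and transverse to $\fol$. Since $B(p^{(d)}, \delta_d) \subset B(p^{(d-1)}, \delta_{d-1})$, this shows that every $d'$-simplex in $\langle p^{(d)}, \SD\rangle$ is $\delta_d$-semitransverse in $p^{(d)}$.

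Finally, setting $p' = p^{(D)}$ and $\delta = \delta_D$ gives the conclusion. The only subtle aspect is bookkeeping: one has to choose at each step a radius small enough (here, $\delta_{d-1}/2$) that the previously established semitransversality survives the perturbation, which is handled automatically by the triangle inequality and the definition of $\delta$-semitransversality as stability under perturbations in a ball.
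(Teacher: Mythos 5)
Your proposal is correct and follows essentially the same route as the paper: an induction on the dimension $d$, applying \cref{lem:transSimplicesDep} at each step and using the nesting $B(p^{(d)},\delta_d)\subset B(p^{(d-1)},\delta_{d-1})$ to carry lower-dimensional semitransversality through, exactly as in the paper's proof. The only cosmetic differences are that you apply the lemma with radius $\delta_{d-1}/2$ (an unneeded but harmless safety margin, since the lemma already produces a ball nested inside the input ball) and terminate at $D = \max_{\Delta\in\SD}\dim\Delta + 1$ rather than at $n-k$ followed by an appeal to \cref{rem:TransDim}.
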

\begin{proof}
	Using \cref{lem:transSimplicesDep} we inductively construct a sequence $(p^i)_{i = 1,\dots,n-k}$ of points in $\R^n$ and a sequence of real numbers $(\delta_i)_{i=1,\dots,n-k}$ such that
	\begin{enumerate}[label=(\arabic*)]
		\item\label{it:ball} $B(p^{i},\delta_i) \subset B(p^{i-1},\delta_{i-1})$ where $\delta_0=\epsilon$, and
		\item\label{it:trans} each $i$-simplex in $\langle p^i,\SD\rangle$ is $\delta_i$-semitransverse to $\fol$ in $p^i$.
	\end{enumerate}
	By combining the two items it follows that each simplex in $\langle p^i,\SD\rangle$ of dimension less than or equal to $i$ is $\delta_i$-semitransverse to $\fol$ in $p^i$. 
	Hence we define $p'=p^{n-k}$ and $\delta = \delta_{n-k}$. Semitransversality of simplices of dimension larger than $n-k$ is implied by the transversality of the $(n-k)$-simplices, as by \cref{rem:TransDim}.
\end{proof}

\subsection{Uniform semitransversality} \label{sec:UniSemiT}
When given a vertex $p$ and a finite set $\SD$ of linear simplices, we showed in \cref{sec:polyhedraperturb} how to perturb $p$ to $p'$ such that the simplices in $\langle p',\SD \rangle$ are $\delta$-semitransverse in $p'$. The amount $\delta$ of semitransversality in particular depends on $p$ and $\SD$. Here we show that $\delta$ can be chosen such that it does not depend on $p$ or $\SD$, but on some more general parameters. 

To this end, we observe that an affine $d$-plane in $\R^n$ is determined by a support vector $v \in \R^{n}$ and a direction $V \in \Gr(n,d)$, albeit not uniquely. Hence the set of $d$-planes in $\R^n$ intersecting a subspace $A \subset \R^n$ can be parameterized by the space $\mathrm{Plane}(A,d) \coloneq A \times \Gr(n,d)$, which is compact if $A$ is.

The core observation is then the following.

\begin{lemma} \label{lem:deltaIndepPlanes}
	Let $\epsilon>0$ and $d,C \in \N$ be given such that $d<n$. Then, there exists $\delta>0$ such that for any $p\in \R^{n}$ and any set of $d$-planes $\{F_1,\dots,F_C\}$ in $\R^{n}$, there exists $p'\in\R^{n}$ satisfying that $B(p',\delta) \subset B(p,\epsilon)$ and $B(p',\delta) \cap F_i = \emptyset$ for all $i=1,\dots,C$.
\end{lemma}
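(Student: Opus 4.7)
The plan is to derive the uniform $\delta$ from the pointwise statement \cref{lem:deltaPlanes} by a compactness argument. First I would reduce to $p=0$: the entire statement is translation-equivariant, so it suffices to prove the existence of a uniform $\delta>0$ (depending only on $\epsilon$, $d$, $n$, $C$) such that for every $C$-tuple $(F_1,\dots,F_C)$ of affine $d$-planes in $\R^n$, there is a point $p'$ with $B(p',\delta) \subset B(0,\epsilon)$ avoiding all the $F_i$.

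Next I would observe that planes disjoint from $\overline{B(0,\epsilon)}$ are irrelevant: if $F_i \cap \overline{B(0,\epsilon)} = \emptyset$, then any ball contained in $B(0,\epsilon)$ automatically avoids $F_i$. So I may assume every $F_i$ meets $\overline{B(0,\epsilon)}$. Any such plane has the form $v+V$ for some $(v,V) \in \overline{B(0,\epsilon)} \times \Gr(n,d)$, so the space $\mathcal{P}$ of admissible planes is a continuous image of a compact space, hence compact, and so is $\mathcal{P}^C$.

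Suppose for contradiction that no uniform $\delta$ works. Then there is a sequence of $C$-tuples $(F_1^k,\dots,F_C^k) \in \mathcal{P}^C$ for which no $p'_k$ satisfies the conclusion with $\delta=1/k$. Passing to a subsequence, I may assume $F_i^k$ is represented by $(v_i^k,V_i^k) \to (v_i^\infty, V_i^\infty)$ in $\overline{B(0,\epsilon)} \times \Gr(n,d)$, giving limit planes $F_i^\infty = v_i^\infty + V_i^\infty$. Applying \cref{lem:deltaPlanes} to the limit configuration produces $p'_\infty$ and $\delta_\infty > 0$ with $B(p'_\infty,\delta_\infty) \subset B(0,\epsilon)$ disjoint from every $F_i^\infty$.

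The contradiction will come from the fact that the distance functions $d(\,\cdot\,,F_i^k)$ converge uniformly on $\overline{B(0,\epsilon)}$ to $d(\,\cdot\,,F_i^\infty)$. Indeed, letting $P_i^k$ be the orthogonal projection onto $(V_i^k)^\perp$, one has $d(x,F_i^k) = \|P_i^k(x - v_i^k)\|$, and the definition of $\dproj$ gives $\|P_i^k - P_i^\infty\| \to 0$, yielding uniform convergence on the bounded set $\overline{B(0,\epsilon)}$. Hence for $k$ large enough the ball $B(p'_\infty,\delta_\infty/2)$ is disjoint from each $F_i^k$, and choosing $k$ with $1/k < \delta_\infty/2$ shows that $p'=p'_\infty$ witnesses the desired property for $\delta=1/k$, contradicting the choice of the sequence. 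The only non-bookkeeping step is the uniform convergence of distances, which is immediate from our metric on $\Gr(n,d)$; the rest is standard compactness.
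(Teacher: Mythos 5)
Your proof is correct and takes essentially the same approach as the paper: both parametrize the relevant plane configurations by the compact space $\overline{B(0,\epsilon)}\times\Gr(n,d)$ (raised to the $C$th power), invoke the pointwise Lemma~\ref{lem:deltaPlanes}, and then derive uniformity from compactness together with continuity of the distance-to-plane functions. The paper packages this as the lower semicontinuous function $\tilde\delta$ attaining a positive minimum on a compact set, while you unpack the same argument into a sequential contradiction using uniform convergence of $d(\cdot,F_i^k)$; these are two formulations of the identical compactness argument.
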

\begin{proof}
	We define a function $\tilde{\delta}:\mathrm{Plane}(\overline{B(\pi(p),\epsilon)},d)^C \to \R_+$. Given an element $(F_1,\dots,F_C) \in \mathrm{Plane}(\overline{B(\pi(p),\epsilon)},d)^C$, the function $\tilde\delta$ outputs the maximum $\delta'$ such that there exists $p'\in \R^n$ satisfying $\overline{B(p',\delta')} \subset B(p,\epsilon)$ and $\overline{B(\pi(p'),\delta')} \cap \pi(F_i) = \emptyset$ for $i=1,\dots,C$. As already observed in the proof of \cref{lem:transSimplicesDep}, the number $\delta'$ exists because the $d$-planes have zero measure in $B(p,\epsilon)$ as $d<n$ and there are only finitely many of them.
	
	We observe that $\tilde\delta$ is invariant under a translation by $-p$ of all planes $F_i$ and $p$, and therefore we can assume without loss of generality that $p=0$. Moreover, we claim that $\tilde\delta$ is lower semicontinuous. To see this, fix a set of $(d-1)$-planes $(F_1,\dots,F_C) \in \mathrm{Plane}(\overline{B(\pi(0),\epsilon)},d)^C$ and take $\delta'< \tilde \delta(F_1,\dots,F_C)$. Then there exists an open neighborhood $U$ of $(F_1,\dots,F_C)$ such that $B(p',\delta') \cap \pi(F'_i) = \emptyset$ for $(F'_1,\dots,F'_C)\in U$.
	
	Since $\mathrm{Plane}(\overline{B(\pi(0),\epsilon)},d)^C$ is compact and $\tilde\delta$ is lower semicontinuous, it follows that $\tilde\delta$ attains a minimum, which we denote by $\delta$. This $\delta$ then satisfies the requirements of the statement.
\end{proof}

\begin{remark} \label{rem:deltaIndepPlanesPlane}
	In \cref{lem:deltaIndepPlanes}, we see that if we are given a $D$-plane $H$ in $\R^{n}$ and $d\in \N$ is such that $d<D$, the point $p' $ can be chosen to lie in $H$ since also then the $d$-planes have measure zero in $B(p,\epsilon) \cap H$. We use this in \cref{sec:sTransFace}.
\end{remark}

Using \cref{lem:deltaIndepPlanes}, we obtain a version of \cref{lem:transSimplicesDep} where we have a uniform bound on the amount of semitransversality. Here we first ensure that simplices in $\langle p',\Delta\rangle$ of a fixed dimension $d$ are transverse.

\begin{lemma} \label{lem:transSimplices}
	Let $\epsilon>0$ and $d,C\in \N_{>0}$ be given. There exists $\delta>0$ such that the following holds.
	
	For any $p\in \R^n$ and any set $\SD$ of $C$ linear simplices that are transverse to $\fol$, there exists $p'\in B(p,\epsilon)$ such that
	\begin{itemize}
		\item $B(p',\delta) \subset B(p,\epsilon)$, and
		\item each $d$-simplex in $\langle p',\Delta\rangle$ is $\delta$-semitransverse to $\fol$ in $p'$.
	\end{itemize} 
\end{lemma}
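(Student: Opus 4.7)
The proof follows the same template as that of Lemma \ref{lem:transSimplicesDep}, but now invokes the \emph{uniform} plane-avoidance statement \cref{lem:deltaIndepPlanes} instead of its non-uniform counterpart \cref{lem:deltaPlanes}; this is precisely what produces a $\delta$ independent of $p$ and $\SD$.

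First I would reduce to the case $d \leq n-k$. Suppose we have found $\delta_0>0$ and $p'\in B(p,\epsilon)$ such that $B(p',\delta_0)\subset B(p,\epsilon)$ and every $(n-k)$-simplex of $\langle p',\SD\rangle$ is $\delta_0$-semitransverse to $\fol$ in $p'$. For any $p''\in B(p',\delta_0)$, each $(n-k)$-simplex of $\langle p'',\SD\rangle$ is transverse to $\fol$, so by \cref{rem:TransDim} every higher-dimensional simplex is transverse as well. Thus $\delta_0$-semitransversality of the $(n-k)$-simplices upgrades for free to $\delta_0$-semitransversality of all $d$-simplices with $d>n-k$, and it is enough to produce $\delta$ in the regime $d\leq n-k$.

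Next I project to $\R^{n-k}$ via $\pi$. Enumerate the $(d-1)$-simplices in $\SD$ as $\Delta_1,\dots,\Delta_{C'}$ with $C'\leq C$. Since each $\Delta_i$ is transverse to $\fol$ and $\dim\Delta_i=d-1\leq n-k-1$, \cref{lem:TransPlanesQuotient} tells us that $\pi$ restricted to $\aspan(\Delta_i)$ is injective, so $F_i\coloneqq\pi(\aspan(\Delta_i))$ is a genuine $(d-1)$-plane in $\R^{n-k}$, with $d-1<n-k$. I now apply \cref{lem:deltaIndepPlanes} inside the Euclidean space $\R^{n-k}$ to the point $\pi(p)$ and the planes $F_1,\dots,F_{C'}$ (padding with copies of $F_1$ if $C'<C$), obtaining a constant $\delta>0$, depending only on $\epsilon$, $d-1$, $C$ and $n-k$, together with a point $\hat p\in B(\pi(p),\epsilon)\subset\R^{n-k}$ such that
\[
B(\hat p,\delta)\subset B(\pi(p),\epsilon)\quad\text{and}\quad B(\hat p,\delta)\cap F_i=\emptyset\ \text{for }i=1,\dots,C'.
\]

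Finally I lift $\hat p$ back to $\R^n$ exactly as in \cref{lem:transSimplicesDep}, setting $p'=(p_1,\dots,p_k,\hat p_1,\dots,\hat p_{n-k})$, so that $\pi(p')=\hat p$ and $d(p',p)=d(\hat p,\pi(p))$. Since $\pi$ is a coordinate projection, the $\delta$-ball around $p'$ in $\R^n$ projects onto the $\delta$-ball around $\hat p$ in $\R^{n-k}$, and the inclusion $B(\hat p,\delta)\subset B(\pi(p),\epsilon)$ immediately upgrades to $B(p',\delta)\subset B(p,\epsilon)$. Then for any $p''\in B(p',\delta)$ and any $(d-1)$-simplex $\Delta_i\in\SD$, we have $\pi(p'')\in B(\hat p,\delta)$, so $\pi(p'')\notin\pi(\aspan(\Delta_i))$, and \cref{lem:transSimplex} yields transversality of $\join{p''}{\Delta_i}$; this is exactly the $\delta$-semitransversality of $\join{p'}{\Delta_i}$ in $p'$.

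There is no real obstacle here beyond tracking the dependence of the constants: the only thing to be careful about is that the bound $\delta$ produced by \cref{lem:deltaIndepPlanes} depends on $C$, $d$, $\epsilon$ and the ambient dimension $n-k$, all of which are fixed independently of $p$ and $\SD$, so the resulting semitransversality constant is genuinely uniform in the data allowed to vary.
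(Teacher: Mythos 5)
Your proposal matches the paper's proof essentially line for line: reduce to $d \leq n-k$ via \cref{rem:TransDim}, project to $\R^{n-k}$, invoke the uniform plane-avoidance statement \cref{lem:deltaIndepPlanes} on the projected affine spans $\pi(\aspan(\Delta))$, lift $\hat p$ back to $\R^n$, and deduce semitransversality. The only cosmetic difference is that you unwind \cref{cor:semitransEmptyIntQuotient} and cite \cref{lem:transSimplex} directly rather than quoting the corollary.
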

\begin{proof}
	First we note that we can assume by \cref{rem:TransDim} that $d\leq n-k$. 
		
	By \cref{lem:deltaIndepPlanes}, there exists $\delta>0$ such that for any $p\in \R^{n-k}$ and any set of $(d-1)$-planes $\{F_1,\dots,F_C\}$ in $\R^{n-k}$, there exists $\hat p \in\R^{n-k}$ satisfying that $B(\hat p,\delta) \subset B(p,\epsilon)$ and $B(\hat p,\delta) \cap F_i = \emptyset$ for all $i=1,\dots,C$. Hence given $p$ and $\SD$ as in the current statement, there exists $p'\in\R^{n-k}$ such that $B(p',\delta) \subset B(p,\epsilon)$ and $B(\pi(p'),\delta') \cap \pi(\aspan(\Delta)) = \emptyset$ for all $\Delta \in \SD$. Since the simplices in $\SD$ are transverse, it follows by \cref{cor:semitransEmptyIntQuotient} that the semitransversality condition of the statement is satisfied.
\end{proof}

As in \cref{cor:transPolyDep}, we now want to ensure that simplices of any dimension in $\langle p',\SD\rangle$ are transverse. Compared to \cref{cor:transPolyDep}, we additionally achieve that $\delta$ does not depend on $\SD$. 
\begin{proposition}\label{cor:transPoly}
	Let $\epsilon>0$ and $C\in \N_{>0}$ be given. There exists $\delta>0$ such that the following holds.
	
	For any $p\in \R^n$ and any set $\SD$ of $C$ linear simplices that are transverse to $\fol$, there exists $p'\in B(p,\epsilon)$ such that
	\begin{itemize}
		\item $B(p',\delta) \subset B(p,\epsilon)$, and
		\item each simplex in $\langle p',\SD\rangle$ is $\delta$-semitransverse to $\fol$ in $p'$.
	\end{itemize} 
\end{proposition}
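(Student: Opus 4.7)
The plan is to repeat the inductive argument of \cref{cor:transPolyDep} but now feeding \cref{lem:transSimplices} into the induction instead of \cref{lem:transSimplicesDep}. Since \cref{lem:transSimplices} already provides a semitransversality constant that depends only on the ambient radius, the dimension, and the number of simplices (not on $p$ or $\SD$), the successive constants produced by the induction chain together to give a $\delta$ that depends only on $\epsilon$ and $C$.

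Concretely, I would construct sequences $p = p^{0}, p^{1}, \ldots, p^{n-k} \in \R^{n}$ and $\epsilon = \delta_{0} > \delta_{1} > \cdots > \delta_{n-k}$ such that, for each $d \in \{1,\ldots,n-k\}$:
\begin{itemize}
    \item $B(p^{d}, \delta_{d}) \subset B(p^{d-1}, \delta_{d-1})$, and
    \item every $d$-simplex of $\langle p^{d}, \SD \rangle$ is $\delta_{d}$-semitransverse to $\fol$ in $p^{d}$.
\end{itemize}
At step $d$ I apply \cref{lem:transSimplices} to the point $p^{d-1}$, with radius $\delta_{d-1}$ and to the subset of $\SD$ consisting of $(d-1)$-simplices (whose cardinality is at most $C$). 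This outputs a constant $\delta_{d}$ and a point $p^{d} \in B(p^{d-1}, \delta_{d-1})$ realising the two properties above. Crucially, the constant $\delta_{d}$ furnished by \cref{lem:transSimplices} depends only on $\delta_{d-1}$, $d$ and $C$, so the recursion produces $\delta \coloneq \delta_{n-k}$ as a function of $\epsilon$, $C$ (and the fixed $n,k$) alone.

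It remains to check that the semitransversality properties achieved at earlier stages survive the later perturbations. If $\langle p^{d}, \Delta \rangle$ is $\delta_{d}$-semitransverse in $p^{d}$ and $p^{d+1} \in B(p^{d}, \delta_{d})$ with $B(p^{d+1}, \delta_{d+1}) \subset B(p^{d}, \delta_{d})$, then for every $q \in B(p^{d+1}, \delta_{d+1}) \subset B(p^{d}, \delta_{d})$ the simplex $\langle q, \Delta \rangle$ is non-degenerate and transverse, so $\langle p^{d+1}, \Delta \rangle$ is $\delta_{d+1}$-semitransverse in $p^{d+1}$. Hence after the full induction every simplex in $\langle p^{n-k}, \SD \rangle$ of dimension $\leq n-k$ is $\delta$-semitransverse in $p' \coloneq p^{n-k}$, and the higher-dimensional simplices in $\langle p', \SD \rangle$ are handled by \cref{rem:TransDim}.

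I do not expect a genuine obstacle: the proof is essentially a uniform reformulation of \cref{cor:transPolyDep}, made possible by the compactness-based uniformity of \cref{lem:deltaIndepPlanes,lem:transSimplices}. The only mild point to be careful about is booking the counting: at the inductive step one needs that the number of relevant $(d-1)$-simplices in $\SD$ is bounded by $C$, which is immediate from the assumption $|\SD| = C$.
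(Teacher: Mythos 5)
Your proposal is correct and follows essentially the same route as the paper: the paper also proves this proposition by replacing \cref{lem:transSimplicesDep} with \cref{lem:transSimplices} in the inductive argument of \cref{cor:transPolyDep}, with the nesting $B(p^d,\delta_d)\subset B(p^{d-1},\delta_{d-1})$ ensuring that earlier semitransversality survives and \cref{rem:TransDim} handling dimensions above $n-k$. Your explicit check that the constants $\delta_d$ depend only on $\delta_{d-1}$, $d$, $C$ (and $n,k$) is exactly the uniformity point the paper relies on.
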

\begin{proof}
	We prove this statement by applying \cref{lem:transSimplices} inductively. More explicitly, we obtain a proof for this statement by replacing \cref{lem:transSimplicesDep} by \cref{lem:transSimplices} in the proof of \cref{cor:transPolyDep}.
\end{proof}

\begin{remark}\label{rem:transPolyScaling}
	We note that in \cref{cor:transPoly} the constant $\delta$ can be taken to linearly depend on $\epsilon$. That is, let $\delta$ be as produced by \cref{cor:transPoly} for $\epsilon=1$ and a given $C\in\N$. Then $\delta'=L \delta$ satisfies the conditions of the statement for $\epsilon=L>0$ and the same constant $C$.
\end{remark}

\subsection{Perturbing within a skeleton}\label{sec:sTransFace}
\Cref{cor:transPoly} is now as independent from input data as we will need and suffices for jiggling in the Euclidean setting as in \cref{th:TjigglingEucl}. For \cref{sec:TjigglingSubdiv} we however need a slightly stronger result, which we discuss in the remainder of this section.

In \cref{sec:TjigglingSubdiv} we want to show that if $p$ lies in the $D$-skeleton of a transverse linear simplex or polyhedron, we can choose $p'$ to also lie in this same skeleton. Effectively this means that we want to choose $p'$ in a given $D$-plane $H$ through $p$.  Additionally, we need to deal in \cref{sec:TjigglingSubdiv} with different foliations and sets of linear simplices arising from each (top-dimensional) face containing $p$.

Choosing $p'$ in a given $D$-plane $H$ entails that we cannot achieve semitransversality for simplices of any dimension $d$, but only for those of at most dimension $D$. An exception to this is the case where the dimension $D$ of the plane $H$ is larger than the corank of the foliation, because then we do obtain semitransversality for simplices of any dimension.

In terms of linear planes, we prove the following result. Here we denote for each linear $k$-plane $V_u$ in $\R^n$ the quotient map $\R^n\to\R^n/V_u \cong V^\perp
$ by $\pi_u$, which maps balls in $\R^n$ of radius $r$ to balls of the same radius in $V^\perp$.

\begin{lemma} \label{lem:deltaIndepPlanesMultiple}
	Let $\epsilon>0$ and $d,C,D,U\in\N$ be given such that $d<\min\{D,n-k\}$. Then, there exists $\delta>0$ such that for any 
	\begin{itemize}
		\item point $p \in \R^n$,
		\item set of linear $k$-planes $\{V_1,\dots,V_U\}$ in $\R^n$, 
		\item collection $\{\SE_1,\dots,\SE_U\}$ of sets of $C$ affine $d$-planes in $\R^{n}$ such that the planes in $\SE_u$ are transverse to $V_u$, and
		\item affine $D$-plane $H$ in $\R^{n}$ transverse to all $V_u$,
	\end{itemize}
	there exists $p'\in \R^{n} \cap H$ such that $B(p',\delta) \subset B(p,\epsilon)$ and $B(\pi_u (p'),\delta) \cap \pi_u(F) = \emptyset$ for all $F \in \SE_u$ and all $u=1,\dots,U$.
\end{lemma}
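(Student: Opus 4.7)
The plan is to adapt the proof of \cref{lem:deltaIndepPlanes}: first establish, for each fixed choice of data, the existence of some $\delta > 0$ and $p' \in H \cap B(p, \epsilon)$ satisfying the conclusion via a dimension count showing the forbidden subset of $H$ has measure zero; then upgrade to a uniform $\delta$ via a compactness and lower-semi\-continuity argument on the parameter space of admissible data.

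For the existence step, translation invariance lets me assume $p = 0 \in H$, so $H$ is a linear $D$-plane through the origin. The key observation is that the strict inequality $d < \min\{D, n-k\}$ together with the two transversality hypotheses prevents the inclusion $H \subseteq F + V_u$ for any $F \in \SE_u$: if $\vec H \subseteq \vec F + V_u$ held, then $\dim(\vec H + V_u) \leq d + k$, which contradicts $\dim(\vec H + V_u) = \min\{D+k, n\}$ in both cases (either $D \leq d$ or $n - k \leq d$). Hence $H \cap (F + V_u)$ is a proper affine subspace of $H$, of dimension strictly less than $D$, for each pair $(u, F)$, and the union over the $CU$ pairs has measure zero in $H$. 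Any $p'$ in the complement (inside $H \cap B(p, \epsilon)$) satisfies $\pi_u(p') \notin \pi_u(F)$ for all $(u, F)$, and a small enough $\delta > 0$ then simultaneously makes $B(\pi_u(p'), \delta) \cap \pi_u(F) = \emptyset$ and $B(p', \delta) \subset B(p, \epsilon)$.

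For the uniformity, I would parameterize the admissible data by the compact space
\[
X \;=\; \Gr(n, D) \,\times\, \Gr(n, k)^U \,\times\, \prod_{u=1}^U \big(\overline{B(0, 2\epsilon)} \times \Gr(n, d)\big)^C,
\]
where $\Gr(n, D)$ encodes the direction of $H$, each $V_u$ is encoded by a point in $\Gr(n, k)$, and each affine $d$-plane $F \in \SE_u$ is encoded by a support vector (which may be restricted to a fixed compact set, since planes not meeting a neighborhood of $\overline{B(0, \epsilon)}$ impose no constraint) together with a direction in $\Gr(n, d)$. Defining $\tilde\delta: X \to [0, \infty)$ as the supremum of those $\delta$ for which a valid $p'$ exists, the existence step gives $\tilde\delta > 0$ on the sublocus of $X$ satisfying the transversality hypotheses, and a routine perturbation argument shows $\tilde\delta$ is lower semi-continuous there. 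Compactness of $X$ combined with lower semi-continuity would then force $\tilde\delta$ to attain a positive infimum, providing the desired uniform $\delta$.

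The main obstacle lies precisely in this last step: the transversality hypotheses carve out only an \emph{open} sublocus of $X$, so one must argue carefully that $\tilde\delta$ does not degenerate to $0$ as one approaches the boundary (where $\vec H$ tends to a non-transverse position relative to some $V_u$, causing $\pi_u(H)$ to collapse). This is the technical heart of the argument, and is where the strict inequality $d < \min\{D, n-k\}$ and the finiteness of $CU$ are put to full use, via the fact that the codimension of the forbidden subspaces inside $H$ remains uniformly positive under the transversality hypothesis.
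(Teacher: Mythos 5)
Your proposal mirrors the paper's own route — existence for fixed data by a measure count in $H$, then uniformity by lower semicontinuity and compactness of a parameter space — and the existence step is fine (your $H \cap (F+V_u)$ viewpoint is an equivalent bookkeeping of the paper's projection $\pi_u(F)$ inside $\pi_u(H)$). But the obstacle you flag at the end is a genuine gap, and the resolution you sketch does not close it: the uniformly positive codimension of the forbidden set $H\cap(F+V_u)$ inside $H$ does \emph{not} control $\tilde\delta$. The difficulty is metric, not dimensional. As $\vec H$ degenerates toward a plane non-transverse to some $V_u$, the restriction $\pi_u|_H$ distorts lengths without bound, so $\pi_u(H\cap B(p,\epsilon))$ shrinks to nothing even though the forbidden set keeps positive codimension in $H$ throughout.

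Concretely: take $n=2$, $k=1$, $D=1$, $d=0$, $C=U=1$, $\epsilon=1$, $p=0$, $V_1$ the $x$-axis, $\SE_1 = \{\{0\}\}$, and $H_\theta$ the line through $0$ at angle $\theta\in(0,\pi/2]$ to $V_1$. Every hypothesis, including $d<\min\{D,n-k\}$, is satisfied, and $H_\theta\cap(F+V_1)=\{0\}$ always has codimension one in $H_\theta$; yet the largest achievable $\delta$ is $\sin\theta/(1+\sin\theta)$, which tends to $0$ as $\theta\to 0$. So no uniform $\delta$ exists and the lemma as stated does not hold. (The paper's own proof is terse at exactly this point and is subject to the same objection: it ``takes the minimum over all such planes,'' but the locus cut out by the transversality conditions is open, not compact, and this family shows the infimum of $\tilde\delta$ over it is zero.) A repair requires a quantitative hypothesis keeping $\Gr(\vec H)$ uniformly $\dproj$-bounded away from the locus of planes non-transverse to the $V_u$, or equivalently a uniform lower bound on the smallest singular value of $\pi_u|_{\vec H}$. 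In the only place the lemma is actually invoked (the proof of Lemma~\ref{lem:jiggleSubdivIncl}), such a bound is automatic: there $H$ runs over the finitely many faces of a fixed complex, the $V_u$ over a compact family of planes, and stratified transversality over a compact set supplies a positive minimal angle by continuity and compactness.
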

\begin{proof}
	We let $p$, the linear planes $V_u$, the sets of affine planes $\SE_u$, the affine plane $H$ and the natural number $d$ be given. We first argue that $\delta$ and $p'$ exist as required in this specific setting, then we argue it can be chosen independently of the givens. 
	
	Fixing $u$, we note that $\pi_u(H)$ is a plane of dimension $\min\{D,n-k\}$ since $H$ is transverse to $V_u$, whereas the plane $\pi_u(F)$ is only $d$-dimensional for planes $F \in \SE_u$. Hence by \cref{rem:deltaIndepPlanesPlane} we can find $\delta>0$ and $p' \in B(p,\epsilon) \cap H$ satisfying the requirements of the theorem for a single $u\in \{1,\dots,U\}$. Differently put, there exists a measure zero set $Q_u \subset \R^n$ such that if we choose $p' \in Q_u$ there exists no $\delta>0$ satisfying the requirements. Hence if we choose $p'\in B(p,\epsilon) \setminus (\cup_{u=1}^U Q_u)$ there does exist $\delta>0$ such that $p'$ and $\delta$ satisfy the requirements. 
	
	To see that $\delta$ can be chosen independently of the givens, we observe first of all that $\delta$ is invariant under a translation by $-p$ of all givens so that we can assume that $p=0$. Moreover, each plane $V_u$ is determined by an element of $\Gr(n,k)$, whereas we recall that a $k$-plane intersecting $\overline{B(0,\epsilon)}$ is determined (see \cref{sec:UniSemiT}) by an element in $\mathrm{Plane}(\overline{B(0,\epsilon)},k)$ and $H$ is determined by an element in $\mathrm{Plane}(\{0\},D)$. Hence as in the proof of \cref{lem:deltaIndepPlanes}, we take $\delta$ to be the minimum over all such planes $V_u$, sets $\SE_u$ and planes $H$. 
\end{proof}

We now prove the following variation of \cref{lem:transSimplices} where we ensure that $p'$ lies in a given plane $H$ through $p$, and where we deal with multiple sets of linear simplices and multiple planes (each defining a constant foliation). We first achieve semitransversality for simplices of a fixed dimension $d$. 
\begin{lemma} \label{lem:transSimplicesFace} 
	Let $\epsilon>0$ and $C,D,U\in \N$ be given. There exists $\delta>0$ such that the following holds. 
	
	For any
	\begin{itemize}
		\item point $p\in \R^n$,
		\item set of linear $k$-planes $\{V_1,\dots,V_U\}$ in $\R^n$,
		\item sets $\{\SD_1,\dots,\SD_U\}$ of $C$ linear simplices such that the simplices in $\SD_u$ are transverse to $\fol(V_u)$,
		\item affine $D$-plane $H$ through $p$ and transverse to all $\fol(V_u)$, and
		\item natural number $d\in \N_{>0}$ such that $d\leq D$ if $D<n-k$,
	\end{itemize}
	there exists $p'\in B(p,\epsilon) \cap H$ such that 
	\begin{itemize}
		\item $B(p',\delta) \subset B(p,\epsilon)$, and
		\item each $d$-simplex $\Delta \in \langle p',\SD\rangle$ is $\delta$-semitransverse to $\folconst{V_u}$ for all $u\in\{1,\dots,U\}$ in $p'$.
	\end{itemize} 
\end{lemma}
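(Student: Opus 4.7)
The plan is to follow the template of the proof of \cref{lem:transSimplices}, replacing its single appeal to \cref{lem:deltaIndepPlanes} with one to the more general \cref{lem:deltaIndepPlanesMultiple}. The two additional features of the latter---that $p'$ can be forced to lie on a prescribed $D$-plane $H$, and that several foliations $\folconst{V_u}$ are handled simultaneously---match precisely the strengthenings required here, so the only real work is to align inputs and dimensions correctly.

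First I would use \cref{rem:TransDim} to reduce to $d \leq n-k$: for $d > n-k$, the $\delta$-semitransversality in $p'$ of a $d$-simplex $\join{p'}{\Delta}$ follows from that of any $(n-k)$-subsimplex of $\join{p'}{\Delta}$ containing $p'$, since a perturbation of $p'$ preserving the transversality of a subsimplex through $p'$ certainly preserves transversality of the larger simplex. Combined with the standing hypothesis that $d \leq D$ whenever $D < n-k$, this leaves only the finitely many values $d \in \{1, \ldots, \min\{D, n-k\}\}$, and in each such case $d - 1 < \min\{D, n-k\}$---exactly the dimension condition needed to apply \cref{lem:deltaIndepPlanesMultiple} with its parameter equal to $d-1$.

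Next, for each such $d$ I would apply \cref{lem:deltaIndepPlanesMultiple} to extract a positive constant $\delta_d$ and set $\delta$ to be the (finite) minimum, so that $\delta$ depends only on $\epsilon, C, D, U, n, k$. Given the data of the statement, for each $u$ set
\[
\SE_u = \{\aspan(\Delta) \mid \Delta \in \SD_u,\ \dim \Delta = d-1\},
\]
a collection of at most $C$ affine $(d-1)$-planes in $\R^n$. The transversality of each $\Delta \in \SD_u$ to $\folconst{V_u}$ is exactly the statement that $\Gr(\Delta) \trans V_u$, so each plane in $\SE_u$ is transverse to $V_u$; together with the assumed transversality of $H$ to each $V_u$, this places us in the hypotheses of \cref{lem:deltaIndepPlanesMultiple}. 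The lemma then produces $p' \in H$ with $B(p', \delta) \subset B(p, \epsilon)$ and $B(\pi_u(p'), \delta) \cap \pi_u(\aspan(\Delta)) = \emptyset$ for every $\Delta \in \SD_u$ of dimension $d-1$ and every $u$.

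To close, I would invoke \cref{cor:semitransEmptyIntQuotient}, which converts this projection-avoidance condition directly into the required $\delta$-semitransversality of $\join{p'}{\Delta}$ to $\folconst{V_u}$ in $p'$. There is no genuinely hard step; the only mildly delicate point is the semitransversality analogue of \cref{rem:TransDim} used in the first paragraph, which must be stated explicitly even though it follows immediately from the observation that shrinking a simplex to a subsimplex can only enlarge the class of perturbations that remain transverse.
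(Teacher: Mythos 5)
Your proposal is correct and follows essentially the same route as the paper's proof: reduce to $d\le n-k$ via \cref{rem:TransDim}, apply \cref{lem:deltaIndepPlanesMultiple} to the affine spans of the $(d-1)$-simplices in each $\SD_u$ to find $p'\in H$ avoiding their $\pi_u$-projections, and close with \cref{cor:semitransEmptyIntQuotient}. You are somewhat more explicit than the paper on two bookkeeping points that the paper elides --- the verification that $d-1 < \min\{D,n-k\}$ after the reduction, and the need to take $\delta$ as a minimum over the finitely many admissible values of $d$ --- and these are worth stating, so no issues.
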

\begin{proof}
	By \cref{rem:TransDim}, we can assume again that $d\leq n-k$.  
	We let $\pi_u: \R^n\to\R^n/\folconst{V_u}\simeq\R^{n-k}$ denote the quotient maps. By \cref{lem:deltaIndepPlanesMultiple}, there exists $\delta>0$ such that for any of the givens as in the current statement, there exists $p' \in \R^{n-k}$ satisfying that $B(p',\delta) \subset B(p,\epsilon)$ and $B(\pi_u(p'),\delta) \cap \pi_u(\aspan(\Delta)) = \emptyset$ for all $(d-1)$-simplices $\Delta$ in $\SD_u$ and all $u \in \{1,\dots,U\}$. By \cref{cor:semitransEmptyIntQuotient} this implies the claim. 
\end{proof}

We now improve upon \cref{lem:transSimplicesFace} by ensuring that simplices of in a range of dimensions are semitransverse. Generally we cannot achieve this for simplices of all dimensions, as was the case in \cref{cor:transPoly}, since we ask for the perturbation $p$ to lie in a $D$-dimensional plane. As in \cref{lem:transSimplicesFace}, we can only achieve semitransversality for $d$-simplices where $d \leq D$ if $D\leq n-k$. If $D > n-k$, we do achieve semitransversality for all simplices.
\begin{proposition} \label{pro:transPolyInducFace}
	Let $\epsilon>0$ and $C,D,U\in \N$ be given. There exists $\delta>0$ such that the following holds.
	
	For any
	\begin{itemize}
		\item point $p\in \R^n$,
		\item set of linear $k$-planes $\{V_1,\dots,V_U\}$ in $\R^n$, 
		\item sets $\{\SD_1,\dots,\SD_U\}$ of $C$ linear simplices such that the simplices in $\SD_u$ are transverse to $\fol(V_u)$, and
		\item affine $D$-plane $H$ through $p$ and transverse to all $\fol(V_u)$,
	\end{itemize}
	there exists $p'\in B(p,\epsilon) \cap H$ such that for all $d \in \N_{>0}$ satisfying $d\leq D$ if $D\leq n-k$, we have 
	\begin{itemize}
		\item $B(p',\delta) \subset B(p,\epsilon)$, and
		\item each $d$-simplex $\Delta \in \langle p',\SD\rangle $ is $\delta$-semitransverse to $\folconst{V_u}$ for all $u\in\{1,\dots,U\}$ in $p'$. 
	\end{itemize}  
\end{proposition}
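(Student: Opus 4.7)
The plan is to iterate \cref{lem:transSimplicesFace} over simplex dimensions, in direct parallel with the way \cref{cor:transPoly} is derived from \cref{lem:transSimplices}, but constrained to keep the perturbed point inside $H$. The key observation is that \cref{lem:transSimplicesFace} already carries all of the geometric content (including the "stay-in-$H$" aspect and the multi-foliation version); only the bookkeeping across dimensions remains.

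Concretely, I would set $d_\star = \min\{D, n-k\}$, $p^0 = p$ and $\delta_0 = \epsilon$, and then construct inductively, for $i = 1, \ldots, d_\star$, a point $p^i \in H$ and a radius $\delta_i > 0$ such that
\begin{enumerate}[label=(\arabic*)]
\item $B(p^i, \delta_i) \subset B(p^{i-1}, \delta_{i-1})$, and
\item every $i$-simplex in $\langle p^i, \SD_u\rangle$ is $\delta_i$-semitransverse to $\folconst{V_u}$ in $p^i$ for every $u \in \{1,\ldots,U\}$.
\end{enumerate}
The inductive step is a direct application of \cref{lem:transSimplicesFace} at dimension $d = i$, with its $\epsilon$ set to $\delta_{i-1}$ and base point $p^{i-1}$: the plane $H$ still passes through $p^{i-1}$ since $p^{i-1} \in H$, and its transversality to each $\folconst{V_u}$ is unchanged. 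The dimensional side-condition "$d \leq D$ if $D < n-k$" of that lemma holds because $i \leq d_\star \leq D$.

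By the nested-ball property (1), the semitransversality obtained at step $j < i$ is automatically inherited at step $i$: any perturbation of $p^i$ inside $B(p^i, \delta_i) \subset B(p^j, \delta_j)$ keeps the corresponding $j$-simplex non-degenerate and transverse to every $\folconst{V_u}$. Setting $p' := p^{d_\star}$ and $\delta := \delta_{d_\star}$ therefore gives $B(p', \delta) \subset B(p, \epsilon)$, $p' \in H$, and $\delta$-semitransversality in $p'$ of every $d$-simplex in $\langle p', \SD_u\rangle$ for $1 \leq d \leq d_\star$. When $D > n-k$, the proposition additionally requires semitransversality for $d > n-k$; this follows from the $d_\star = n-k$ case via \cref{rem:TransDim}, exactly as in the last paragraph of the proof of \cref{cor:transPolyDep}.

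Uniformity of $\delta$ in the choices of $p$, the $V_u$, the $\SD_u$ and $H$ is immediate from the corresponding uniformity built into \cref{lem:transSimplicesFace}, chained through the at most $n$ inductive steps. The only genuine point to watch will be the two-case distinction $D \leq n-k$ vs.\ $D > n-k$ in the conclusion, and verifying that $p^i \in H$ propagates; no new geometric input beyond \cref{lem:transSimplicesFace} and \cref{rem:TransDim} is required.
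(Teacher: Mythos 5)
Your proposal is correct and takes essentially the same route as the paper: iterate \cref{lem:transSimplicesFace} over dimensions $i=1,\dots,\min\{D,n-k\}$, mirroring the derivation of \cref{cor:transPoly} from \cref{lem:transSimplices}, with the nested-ball property propagating semitransversality across steps and \cref{rem:TransDim} supplying the $d>n-k$ cases. If anything your version is slightly cleaner on two points: you state $p^i\in H$ as a separate condition rather than asking $B(p^i,\delta_i)\subset B(p^{i-1},\delta_{i-1})\cap H$ (which read literally cannot hold when $D<n$), and you work with the single fixed $D$ via $d_\star=\min\{D,n-k\}$ rather than ranging over all $D\le n-k$ and taking a minimum of $\delta_D$'s as the paper does, which is harmless but redundant once $D$ is fixed.
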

\begin{proof}
	For each  $D \in \{1,\dots,n-k\}$, the proof is a minor variation of the proof of \cref{cor:transPoly}, since we just need to ask for points in the sequence $(p^j)_{j=1,\dots,i}$ to additionally lie in $H$, and ask for semitransversality to all $\folconst{V_u}$. Hence for each $D \in \{1,\dots,n-k\}$, we follow the proof of \cref{cor:transPoly} but replace \cref{it:ball,it:trans} of the induction hypothesis by 
	\begin{enumerate}[label=(\arabic*)]
		\item $B(p^i , \delta_i) \subset B(p^{i-1},\delta_{i-1}) \cap H$ where $\delta_0=\epsilon$, and
		\item each $i$-simplex in $\langle p^i,\SD \rangle$ is $\delta_i$-semitransverse to $\folconst{V_u}$ for all $u\in\{1,\dots,U\}$ in $p^i$.
	\end{enumerate}
	We use \cref{lem:transSimplicesFace} to deal with this improved induction hypothesis. This produces a set $\{\delta_D \mid 1 \leq D \leq n-k\} $ and we define $\delta$ as its minimum, using again \cref{rem:TransDim} to deal with the transversality of simplices of dimension higher than $n-k$. 
\end{proof}

\section{Jiggling} \label{sec:Tjiggling}
We now turn to proving our jiggling results. In \cref{sec:TjigglingLin} we first jiggle maps from linear polyhedra to Euclidean space. To deal with the issue that being in general position (or being stratified transverse) is not preserved under subdivision, we introduce the idea of jiggling subdivisions in \cref{sec:TjigglingSubdiv}. This enables us to prove a relative version of jiggling in \cref{sec:TjigglingRel}, and hence the manifold case in \cref{sec:TjigglingMfd} where we jiggle maps from linear polyhedra to a manifold. We end by recovering Thurston's jiggling lemma in \cref{sec:TjigglingTriang}.

\subsection{Jiggling in the linear setting} \label{sec:TjigglingLin}
We start by jiggling a map $f: |K|\to \R^n$. The idea of the proof is that we first linearize $f$ so that we can deal with a piecewise linear function. We subdivide so that the linearization is a good approximation and such that the distribution is almost constant within simplices. Then we inductively perturb the vertices in the image of $f_\ell^\lin$ such that each simplex, spanned by vertices that are already perturbed, is in general position with respect to the constant foliation we obtain by fixing the distribution in a point of the simplex. Since the distribution is almost constant within simplices, we deduce from this that the resulting map is in general position.

We recall from \cref{def:ThJiggling} that $(g,K')$ is a $\gamma$-jiggling of $(f,K)$ if $K'$ is a subdivision of $K$, and if $d_{C^1}(f,g) < \gamma$.

\begin{proposition} \label{th:TjigglingEucl} 
	Consider a finite simplicial complex $K$ and a rank $k$ distribution $\xi$ on $\R^n$. Then given
	\begin{itemize}
		\item $\gamma>0$, and
		\item a piecewise embedding $f:|K| \to \R^n$, 
	\end{itemize}
	there exists a $\gamma$-jiggling $(g,K_\ell)$ of $(f,K)$ such that  
	\begin{itemize}
		\item $g$ is a piecewise linear embedding, 
		\item $\dist0(g,f)<\gamma/2^\ell$, and 
		\item $(g,K_\ell)$ is in general position with respect to $\xi$.
	\end{itemize} 
\end{proposition}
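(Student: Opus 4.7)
The approach is to first replace $f$ by the piecewise linear map $f^\lin_\ell$ on a sufficiently fine crystalline subdivision $K_\ell$, and then perturb the images of the vertices one at a time so that each top-dimensional simplex of the resulting complex becomes semitransverse to a constant foliation approximating $\xi$, with a uniform quantitative lower bound on the semitransversality. By Corollary \ref{cor:perturbLin}, for any sufficiently small $\epsilon > 0$ and any sufficiently large $\ell$, a subsequent perturbation of the vertices of $(f^\lin_\ell, K_\ell)$ by at most $\epsilon/2^\ell$ produces a piecewise embedding $g$ satisfying $\dist1(f,g) = O(2^{-\ell} + \epsilon) < \gamma$ and $\dist0(f,g) = O(2^{-2\ell} + \epsilon/2^\ell) < \gamma/2^\ell$.

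For the vertex perturbations, I would fix an ordering of the vertices of $K_\ell$ and process them inductively. When processing a vertex $v$, let $\SD_v$ be the collection of those codimension-one faces opposite to $v$ in the top-dimensional simplices of $\str(v)$ whose other vertices have all already been processed; each $\join{v}{\Delta}$ with $\Delta \in \SD_v$ is then a top-dimensional simplex whose remaining vertices are already in their final positions, and the cardinality of $\SD_v$ is bounded by a constant depending only on $K$ by Corollary \ref{cor:crysboundlink}. Picking a basepoint $x_v$ near $f^\lin_\ell(v)$, each $\Delta \in \SD_v$ is transverse to $\folconst{\xi_{x_v}}$ by the induction hypothesis, possibly after applying Lemma \ref{lem:semitransDiffFol} to change basepoint (all the previous basepoints lie within $O(2^{-\ell})$ of $f^\lin_\ell(v)$). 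Proposition \ref{cor:transPoly}, together with the linear scaling of $\delta$ in $\epsilon$ from Remark \ref{rem:transPolyScaling}, then provides a uniform $\delta > 0$ and a perturbation of $f^\lin_\ell(v)$ inside a ball of radius $\epsilon/2^\ell$ after which each $\join{v'}{\Delta}$ with $\Delta \in \SD_v$ becomes $(\delta/2^\ell)$-semitransverse in $v'$ to $\folconst{\xi_{x_v}}$.

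After every vertex has been processed, each top-dimensional simplex $\sigma$ of $K_\ell$ has been made $(\delta/2^\ell)$-semitransverse in its latest vertex to a nearby constant foliation $\folconst{\xi_{x_\sigma}}$, with $\delta$ independent of $\ell$. The shape parameters of $\sigma$ satisfy $\rmin(\sigma), \rmax(\sigma) \sim 2^{-\ell}$ and $\maxcoeff(\sigma) \sim 2^\ell$ by Lemma \ref{lem:rmaxminBound}, so that Corollary \ref{cor:genPosAlmostConstant}, applied with $\ell$ chosen so large that $\rmax(\sigma)$ lies below the required threshold, concludes that $\sigma$ is in general position with respect to $\xi$ itself.

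The main technical hurdle is orchestrating the constants so that one simultaneous choice of $\epsilon$ and $\ell$ secures the $C^1$-closeness, permits the inductive perturbations with a uniform semitransversality constant $\delta$, and brings $\rmax(\sigma)$ within the regime where Corollary \ref{cor:genPosAlmostConstant} applies. The required uniformity of $\delta$ hinges on the bound on $|\str(v)|$ from Corollary \ref{cor:crysboundlink} (so that the combinatorial input to Proposition \ref{cor:transPoly} is controlled) and on the compactness argument that underpins that proposition.
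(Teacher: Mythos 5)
Your overall strategy --- linearize via $f^\lin_\ell$, rescale by $2^\ell$, perturb vertices inductively using \cref{cor:transPoly} with $\SD$-sets bounded via \cref{cor:crysboundlink}, handle the nonconstant distribution via semitransversality estimates, and close with \cref{cor:genPosAlmostConstant} --- is exactly the architecture of the paper's proof. However, there is one genuine gap in how you set up the inductive step that would cause the final appeal to \cref{cor:genPosAlmostConstant} to fail.

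You define $\SD_v$ as the collection of codimension-one faces opposite to $v$ in \emph{top-dimensional} simplices of $\str(v)$, so that $\langle v',\SD_v\rangle$ consists only of top-dimensional simplices. After the induction you therefore know that each top-dimensional simplex $\sigma$ is $(\delta/2^\ell)$-semitransverse in its latest vertex, but you have no control over the proper subsimplices of $\sigma$. Now look at the hypothesis of \cref{cor:genPosAlmostConstant}: it requires that \emph{each subsimplex} $\Delta'$ of $\Delta$ be $\delta$-semitransverse in one of its vertices. This is not implied by semitransversality of $\Delta$ alone; indeed, general position is strictly stronger than transversality precisely because it constrains all faces (\cref{fig:transvNotSubdiv} shows a subdivision that is transverse but not stratified transverse). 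So your conclusion yields transversality of $(g,K_\ell)$, but not general position.

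The fix is simple and is what the paper does: when processing $v$, take $\SD_v$ to be \emph{all} simplices in $\str(v)$ (of every dimension) spanned by already-processed vertices. Then $\langle v',\SD_v\rangle$ contains simplices of every dimension $\le\dim K$, and \cref{cor:transPoly} makes every one of them $\delta$-semitransverse in $v'$, with $\delta$ still uniform because $|\str(v)|$ is bounded independently of $\ell$ by \cref{cor:crysboundlink}. The inductive invariant then becomes: every simplex $\langle p'_{j_0},\dots,p'_{j_d}\rangle$ with all indices $\le i$ is $\delta$-semitransverse in $p'_{j_d}$ (where $j_d$ is maximal) to $\folconst{\xi_{p_{j_d}}}$. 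With this stronger invariant in hand, every subsimplex of every top-dimensional simplex carries a semitransversality certificate, and \cref{cor:genPosAlmostConstant} applies. The rest of your argument --- the basepoint comparison via \cref{lem:semitransDiffFol} (the paper packages this as \cref{cor:semitransAlmostConstant}), the linear scaling of $\delta$ from \cref{rem:transPolyScaling}, and the final choice of $\ell$ --- then goes through as you describe.
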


\begin{proof}
	\pfstep{Size of perturbation}
	We recall that we denote by $f^\lin_\ell$ the linearization of $f$ with respect to the $\ell$th crystalline subdivision $K_\ell$ of $K$. By \cref{cor:perturbLin} there exists $\epsilon>0$ such that if we perturb the vertices of $f^\lin_\ell$ by at most $\epsilon/2^\ell$, that the resulting map is a $\gamma$-jiggling of $(f,K)$ and is $\gamma/2^\ell$ close in the $C^0$-metric.

	\pfstep{Rescaling the metric}
	We remind the reader that \cref{lem:rmaxminBound} tells us that the size of the simplices of $K_\ell$, and hence of $(f,K_\ell)$, scales by $2^{-\ell}$. Hence, from now on, we rescale the metrics on both the ambient space of $K$ and on $\R^n$ by $2^\ell$ and denote this by $d_\ell$, where $\ell$ corresponds to the number of crystalline subdivisions of $K$. This simplifies the notation, as we can now for instance refer to the above $(\epsilon/2^\ell)$-perturbation as a perturbation by $\epsilon$ with respect to the rescaled metric. We also rescale all related notions, such as $B(\cdot_1,\cdot_2)$ and $\rmax$, by the same factor $2^\ell$ and denote these respectively by $B_\ell(\cdot_1,\cdot_2)$ and $r_\maxa^\ell$. The reader should also interpret semitransversality with respect to rescaled metric throughout the proof. 
	
	\pfstep{Some notation}
	We now introduce the some notation: We denote the vertices of $K_\ell$ by $\{v_0,\dots,v_{N(\ell)}\}$ for some $N(\ell) \in \N$ and denote their images under $f$ by $\{p_0,\dots,p_{N(\ell)}\}$. We let $P_\ell^{(i)}$ be the set of linear simplices that are the image  under $f_\ell^\lin$ of a simplex in $K_\ell$ with vertices contained in $\{v_0,\dots,v_i\}$. 
		
	\pfstep{Obtaining bounds}
	We recall that $\rmax(\Delta)$ is the maximum distance between a vertex and its opposite face in the simplex $\Delta$. From \cref{lem:rmaxminBound} we obtain $B\in \R_+$ such that $ \max_{\Delta' \in K_\ell} \rmax(\Delta') = B $ for all $\ell$ large enough. If we now define $\rmax$ as $\|f\|_{C_1} \cdot B$ and consider a simplex $\Delta \in (f_\ell^\lin,K_\ell)$, we obtain that $r_\maxa^\ell(\Delta) \leq \rmax $ for all $\ell$ large enough. Here we recall that we use the notation $\Delta \in (f_\ell^\lin,K_\ell)$ for simplices $\Delta$ in the image of $K_\ell$ under $f_\ell^\lin$.

	By \cref{cor:crysboundlink} we bound the size of the vertex link of a vertex $v_i \in K_\ell$ independently of $v_i$ and the number of subdivisions $\ell$. 
	Hence there also exists an upper bound $C \in \N$ such that $| \str(v_i) | \leq C$ for all $i \leq N(\ell)$ and $\ell \in N$.
	
	\pfstep{Amount of semitransversality}
	\Cref{cor:transPoly} tells us there exists a real number $\delta>0$ such that the following holds: For any constant foliation $\fol$ of rank $k$, any $p \in \R^n$ and any set $\SD$ of $C$ linear simplices that are transverse to $\fol$, there exists $p'\in \R^n$ such that 
	\begin{itemize}
		\item $B_\ell(p',\delta) \subset B_\ell(p,\epsilon)$ and
		\item each simplex in $\langle p',\SD\rangle$ is $\delta$-semitransverse to $\fol$.
	\end{itemize}
	We recall from \cref{rem:transPolyScaling} that the achieved amount $\delta$ of semitransversality in \cref{cor:transPoly} can be assumed to linearly depend on $\epsilon$, which allows us to state the above using the rescaled metric. We now use this to inductively perturb the vertices of $(f^{\lin}_\ell,K_\ell)$:
	
	\pfstep{Induction hypothesis} 
	The induction hypothesis, depending on $i \in \N$, is that we can find $p'_0,\dots,p'_i \in \R^n$ such that	
	\begin{enumerate}[label={ (IH\arabic*)}]
		\item\label{it:IHball} $p'_j \in B_\ell(p_j,\epsilon)$ for all $j \leq i$, and
		\item\label{it:IHsemitrans} each simplex $\langle {p'_j}_0, \dots, {p'_j}_d \rangle$ is $\delta$-semitransverse to $\folconst{\xi_{p_d}}$ in $p'_{j_d}$, for all $\langle p_{j_0},\dots,p_{j_d} \rangle \in P_\ell^{(i)}$ with $j_d = \max \{j_0,\dots,j_d\}$. 
	\end{enumerate}
	
	We note that in \cref{it:IHsemitrans}, we consider the semitransversality of the \emph{perturbed} simplex $\langle {p'_j}_1, \dots, {p'_j}_d \rangle$ to the foliation obtained from fixing $\xi$ in the \emph{non-perturbed} point $p_{j_d}$. We also observe that for the base case when $i=0$, it suffices to choose $p'_0=p_0$.	
	
	\pfstep{Induction step} We assume that we have defined $p'_0,\dots,p'_{i-1} \in \R^n$ such that the induction hypothesis is satisfied. We now want to find $p'_i$. 
	
	We let $\SD_i$ be the set of linear simplices $\langle p'_{j_0}, \dots p'_{j_d} \rangle$ where $\langle p_{j_0},\dots,p_{j_d} \rangle \in P_\ell^{(i-1)}$ and $\langle v_{j_0} , \dots, v_{d_j} \rangle \in  \str(v_i)$. By induction we know that each simplex $\Delta$ in $\SD_i$ is $\delta$-semitransverse to $\folconst{\xi_q}$ where $q$ is a vertex of $\Delta$. Hence using the existence of the bound $\rmax$, we can assume by \cref{cor:semitransAlmostConstant} that $\ell$ is large enough so that the distribution $\xi$ is almost constant. It follows that each simplex in $\SD_i$ is transverse to $\folconst{\xi_{p_i}}$ in $q$. We also know that $\SD_i$ contains at most $C$ simplices.
	
	By definition of $\delta$, there exists $p'_i \in \R^n $ such that $B_\ell(p'_i,\delta) \subset B_\ell(p_i,\epsilon)$ and such that each simplex in $\langle p'_i, \SD_i \rangle$ is $\delta$-semitransverse in $p'_i$. The former implies that $p'_i$ satisfies \cref{it:IHball}, and the latter that \cref{it:IHsemitrans} is satisfied by all simplices in $P^{(i)}_\ell$ containing $p'_i$. Using the induction hypothesis for $i-1$, we see that \cref{it:IHsemitrans} is satisfied for all simplices in $P^{(i)}_\ell$.
		
	\pfstep{End of induction} The induction ends when $p'_{N(\ell)}$ has been constructed. We define $g : |K| \to \R^n$ as the piecewise linear map uniquely determined by requesting that $g(v_i) = p'_i$. We know in particular that $g$ satisfies the distance requirements from the statement.
	
	\pfstep{Allowing $\xi$ to vary} 
	At this point we know that every simplex $\Delta$ that is in the image of $g$ is $\delta$-semitransverse to $\folconst{\xi_{q}}$ in one of its vertices $q$. That $g$ is hence in general position follows from \cref{cor:genPosAlmostConstant}.
\end{proof}

\begin{remark} \label{rem:TjigglingAlongSubdiv}
	We observe that the above proof only uses the following two properties of the crystalline subdivision $K_\ell$ of $K$:
	\begin{itemize}
		\item there exists $C>0$ such that for all $\ell\in\N$ and vertices $v\in K_\ell$ we have $\vlink(v) \leq C$, and
		\item there exists $\rmax>0$ such that for all $\Delta \in K_\ell$ we have $ r_\maxa(\Delta) \leq \rmax\cdot 2^{-\ell}$.
	\end{itemize}
	Hence if we have another sequence $(\tilde K^\ell)_{\ell\in \N}$ of subdivisions of $K$ for which the above two properties hold, we can apply the above proof and obtain a function $(g,\tilde K^\ell)$ that satisfies the conditions of \cref{th:TjigglingEucl}. We will refer to this as jiggling $(f,K)$ along the subdivisions $(\tilde K^\ell)_{\ell\in \N}$. 
	
	We point out the possibly surprising fact that we do not need any control over the degeneracy of the subdivisions $\tilde K_\ell$ to be able to achieve uniform $\delta$-semitransversality and hence general position. The main reason for this is that \cref{cor:transPoly} produces, based on solely $C$ and $\rmax$, the amount $\delta$ of semitransversality for each simplex. 
\end{remark}

Inspecting the proof of \cref{th:TjigglingEucl} we observe that it does not just produce one map $(g,K_\ell)$ that satisfies the conditions, but instead a sequence of maps $(g_\ell,K_\ell)_{\ell\geq L}$, which become better and better approximations of the given map $f$ with respect to the $C^0$-distance. Moreover, we can bound the transversality of the $g_\ell$ uniformly from below. 

\begin{corollary} \label{cor:towerJiggling}
	Consider a finite simplicial complex $K$ and a distribution $\xi$ on $\R^n$. Then, given
	\begin{itemize}
		\item $\gamma>0$, and
		\item a piecewise embedding $f:|K| \to \R^n$,
	\end{itemize}
	there exists $\epsilon>0$ and a sequence of $\gamma$-jigglings $(g_\ell,K_\ell)_{\ell\geq L}$ of $(f,K)$ such that for all $\ell \geq L$ we have that
	\begin{itemize}
		\item $g_\ell$ is a piecewise linear embedding,
		\item $\dist0(g_\ell,f)<\gamma/2^\ell$,
		\item $(g_\ell,K_\ell)$ is in general position with respect to $\xi$,
		\item each simplex of $(g_\ell,K_\ell)$ is $\epsilon$-transverse.
	\end{itemize}
\end{corollary}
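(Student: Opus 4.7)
The plan is to reuse the construction from the proof of \cref{th:TjigglingEucl} without modification, extract the \emph{uniform} $\delta$-semitransversality that the proof already produces at every level $\ell$, and upgrade it to uniform $\epsilon$-transversality via \cref{cor:inducSemiTransToTrans}.

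First, for each $\ell \geq L$ with $L$ sufficiently large, I would run the construction in the proof of \cref{th:TjigglingEucl} to obtain a $\gamma$-jiggling $(g_\ell,K_\ell)$ such that $g_\ell$ is a piecewise linear embedding, $\dist{0}(g_\ell,f)<\gamma/2^\ell$, and $(g_\ell,K_\ell)$ is in general position with respect to $\xi$. The threshold $L$ is determined by the finite list of smallness conditions appearing in that proof (the linearization bound from \cref{cor:perturbLin} and the almost-constant requirement for \cref{cor:semitransAlmostConstant}), all of which are satisfied simultaneously for $\ell$ past some threshold. This handles the first three bullets immediately.

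The new content is the fourth bullet. Examining the induction hypothesis in the proof of \cref{th:TjigglingEucl}, every subsimplex of every top simplex of $(g_\ell,K_\ell)$ is $\delta$-semitransverse to $\folconst{\xi_q}$ at some vertex $q$ \emph{in the metric rescaled by $2^\ell$}, and the crucial point is that $\delta$ is independent of $\ell$ (this is \cref{cor:transPoly} together with \cref{rem:transPolyScaling}). In the original metric on $\R^n$ this reads as $(\delta/2^\ell)$-semitransversality. By \cref{cor:semitransAlmostConstant}, for $\ell$ large enough so that $\xi$ is sufficiently close to constant across each top simplex, all subsimplices of a given top simplex are simultaneously $(\delta/(2\cdot 2^\ell))$-semitransverse to one fixed constant foliation $\folconst{V}$, obtained by picking $V=\xi_{q^*}$ for any chosen vertex $q^*$ of that top simplex. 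The bounds required by \cref{cor:inducSemiTransToTrans} with scaling parameter $L=2^\ell$ on $\rmin$, $\rmax$ and $\maxcoeff$ then follow from \cref{lem:rmaxminBound} applied to $K_\ell$, together with the uniform $C^1$-boundedness of $g_\ell$ (inherited from its $C^1$-closeness to $f$) to transfer the bounds from domain simplices to image simplices. The output is that each subsimplex of every top simplex is $\epsilon_1$-transverse to $\folconst{V}$ for some $\epsilon_1>0$ independent of $\ell$.

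To finish, I would appeal to \cref{lem:folAlmostConst} one last time to convert $\epsilon_1$-transversality with respect to the frozen foliation $\folconst{V}$ into $\epsilon$-transversality with respect to $\xi$ itself: for $\ell$ sufficiently large, at every point $x$ of a top simplex the projection operators associated to $\xi_x$ and $\xi_{q^*}$ differ by less than $\epsilon_1/2$ in $\dproj$, so the set of planes failing transversality to $\xi_x$ stays within $\epsilon_1/2$ of the corresponding set for $\xi_{q^*}$; hence $\epsilon=\epsilon_1/2$ works after enlarging $L$ once more. The main obstacle is purely bookkeeping: aligning the metric rescaling by $2^\ell$ inside the proof of \cref{th:TjigglingEucl}, the scaling parameter $L=2^\ell$ in \cref{cor:inducSemiTransToTrans}, and the various uniform-in-$\ell$ constants (from \cref{cor:transPoly,lem:rmaxminBound,cor:crysboundlink}) so that the final $\epsilon$ is honestly independent of $\ell$.
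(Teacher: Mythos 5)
Your proposal follows the paper's proof closely: run the construction of \cref{th:TjigglingEucl} to get the sequence and the first three bullets for free, extract the uniform $\delta/2^\ell$-semitransversality that the induction already provides, and upgrade it to uniform $\epsilon$-transversality via \cref{cor:inducSemiTransToTrans} together with the scaling bounds on $\rmin$, $\rmax$ and $\maxcoeff$ from \cref{lem:rmaxminBound} with $L=2^\ell$. This is exactly the paper's argument.

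Where you differ is in being more explicit about two points. First, the induction in \cref{th:TjigglingEucl} gives each subsimplex $\delta/2^\ell$-semitransversality to $\folconst{\xi_q}$ where $q$ is that subsimplex's own ``last'' vertex, so the foliations differ across subsimplices of a single top simplex; you correctly invoke \cref{cor:semitransAlmostConstant} to align them to one $\folconst{V}$ per top simplex (at the cost of a factor $2$), which is precisely the hypothesis format \cref{cor:inducSemiTransToTrans} needs. The paper's one-line proof elides this alignment, so your version is the more careful reading. Second, you append a final conversion from $\epsilon_1$-transversality to $\folconst{V}$ into $\epsilon$-transversality to $\xi$ itself. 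The paper stops at \cref{cor:inducSemiTransToTrans}, which only produces transversality to the frozen foliation, so whether this conversion is required depends on how one reads the (slightly ambiguous) phrase ``$\epsilon$-transverse'' in the statement. If one does want transversality to $\xi$ itself, note that your argument for it — that the set of planes non-transverse to $\xi_x$ moves by at most $\epsilon_1/2$ in some Hausdorff sense when $\xi_x$ moves by $\epsilon_1/2$ in $\dproj$ — is a quantitative transversality-stability assertion not established anywhere in the paper; it is plausible (by compactness of the Grassmannian and openness of the transversality relation), but as stated it is an unsupported claim rather than an appeal to an existing lemma, and would need to be spelled out. That caveat aside, the proposal reproduces the paper's proof faithfully and fills in an implicit step.
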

\begin{proof}
	The sequence $(g_\ell,K_\ell)_{\ell\geq L}$ consists of the maps $(g_\ell,K_\ell)$ as produced by the proof of \cref{th:TjigglingEucl} for $\ell$ large enough. It is the last property of the statement that is extra compared to \cref{th:TjigglingEucl}. We note that whereas in the proof of \cref{th:TjigglingEucl} we only used the existence of $\rmax>0$, by \cref{lem:rmaxminBound} there also exist real numbers $\rmin,\maxcoeff>0$ such that for all $\Delta \in K_\ell$ we have
	\begin{align*}
		\rmin / 2^{\ell} \leq r_\mina(\Delta) &\leq r_\maxa(\Delta) \leq \rmax/ 2^{\ell} \text{ and} \\
		\maxcoeff(\Delta) &\leq \maxcoeff \cdot 2^{\ell}.
	\end{align*}
	Hence, since each simplex of each $g_\ell$ is $\delta/2^{\ell}$-semitransverse, \cref{cor:inducSemiTransToTrans} produces the requested $\epsilon>0$ such that the statement holds.
\end{proof}

\begin{remark}\label{rem:TjigglingAlongSubdivSeq}
	Continuing with \cref{rem:TjigglingAlongSubdiv}, we note that we can also jiggle $(f,K)$ along a sequence of subdivisions $(\tilde K^\ell)_{\ell \in \N}$ to obtain a sequence of piecewise smooth maps $(g_\ell,\tilde K^\ell)_{\ell\geq L}$ if	
	\begin{itemize}
		\item there exists $C>0$ such that for all $\ell\in\N$ and vertices $v\in \tilde K_\ell$ we have $\vlink(v) \leq C$, and
		\item there exists $\rmin, \rmax,\maxcoeff>0$ such that for all $\Delta \in \tilde K_\ell$ we have
		\begin{align*}
			\rmin / 2^{\ell} \leq r_\mina(\Delta) &\leq r_\maxa(\Delta) \leq \rmax/ 2^{\ell} \text{ and} \\
			\maxcoeff(\Delta) &\leq \maxcoeff \cdot 2^{\ell}. 
		\end{align*}
	\end{itemize}
	We see that to achieve a uniform amount of $\epsilon$-transversality, in contrast to uniform $\delta$-semitransversality, we do need some control over the degeneracy of $\tilde K_\ell$ in the form of $\rmin$ and $\maxcoeff$.
\end{remark}

\subsection{Jiggling subdivisions} \label{sec:TjigglingSubdiv}
To jiggle a map $f:|K|\to N$, where $N$ is a manifold, we will argue chart by chart in the proof of \cref{th:TjigglingMfd}. This means that we need to implement some form of jiggling relative to a subcomplex $A$ of $K$, over which $f$ is already in general position. However, a crucial part of jiggling is that we subdivide, and if a map $(f|_A,K)$ is in general position or just stratified transverse, this is generally not preserved under subdivision.

Hence we consider the following situation, where a map $f: |K| \to \R^n$ is stratified transverse and we are given a subdivision $K'$ of $K$. Then we cannot assume that $(f,K')$ is in general position, but we can jiggle the subdivision $K'$ such that $f$ is in general position with respect to the resulting subdivision. To make this more explicit, we consider the composition 
\[|K| \stackrel{\id}{\to} |K| \stackrel{f}{\to} \R^n.\]
In \cref{th:TjigglingEucl}, we ensure that $f=f \circ \id$ is in general position by subdividing $K$ and perturbing the map $f$. In this section we ensure that the composition $f\circ \id$ is in general position by subdividing $K'$ and perturbing the map $\id:|K|\to |K| \subset \R^m$. The map $f:|K|\to\R^n$ remains unchanged. The result is a piecewise linear map $T:|K| \to |K| $ that triangulates $K$ and is piecewise linear with respect to $K'_\ell$. Moreover, the map $(f,T)$ is in general position. The result of jiggling a subdivision is illustrated in \cref{fig:jiggleSubdiv}.

\begin{figure}[h]
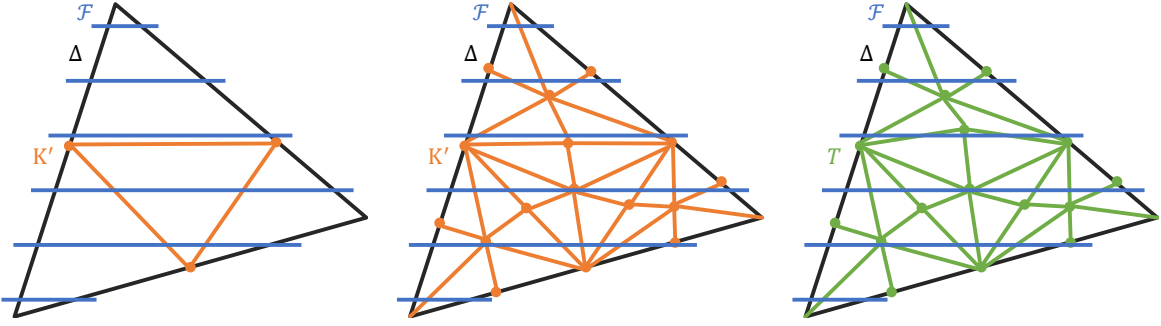

	\centering
	\begin{subfigure}[b]{0.32\textwidth}
		\centering
		\includegraphics[width=\textwidth,page=6, trim = 4cm 1cm 1cm 0.5cm, clip]{Fig_jiggling}
	\end{subfigure}
	\hfill
	\begin{subfigure}[b]{0.32\textwidth}
		\centering
		\includegraphics[width=\textwidth,page=10, trim = 4cm 1cm 1cm 0.5cm, clip]{Fig_jiggling}
	\end{subfigure}
	\hfill
	\begin{subfigure}[b]{0.32\textwidth}
		\centering
		\includegraphics[width=\textwidth,page=7, trim = 4cm 1cm 1cm 0.5cm, clip]{Fig_jiggling}
	\end{subfigure}
	\centering
	\caption{On the left we see a subdivision $K'$ of the simplicial complex consisting of a single simplex $\Delta$. The simplex $\Delta$ is stratified transverse to the horizontal foliation $\fol$, but $K'$ is not. In the middle we indicate the resulting subdivision $K''$ of $K'$ obtained by barycentrically subdividing each simplex of $K'$. On the right we see the result of jiggling the subdivision $K''$, for which we did not use crystalline subdivision. In green we indicate the simplices in the image of $(T,K''_0)$.} \label{fig:jiggleSubdiv}
\end{figure}

We implement the jiggling of a subdivision $K'$ of $K$ using the following lemma. Recall the notation $K^{(\topd)}$ for the top-dimensional simplices of a simplicial complex $K$. We then consider the inclusion $\iota: |K| \hookrightarrow \R^m$ and jiggle $\iota$ such that it is in general position, on each $\Delta \in K^{(\topd)}$, with respect to a distribution defined on that simplex $\Delta$. Moreover, the resulting map $\iota'$ should preserve the skeleta of $K$. Then in \cref{cor:TjiggleSubdivPolyRelativeNoSeq} we see how this achieves the jiggling of a subdivision. 

We note that if $K$ consists of a single top-dimensional simplex $\Delta$, a $\gamma$-jiggling $(\iota',K'')$ satisfying \cref{it:subdivPL,it:subdivGenPos,it:subdivC0} below can be obtained using \cref{th:TjigglingEucl}. Hence compared to \cref{th:TjigglingEucl}, we now additionally need to deal with multiple distributions, and we need to preserve the $j$-skeleton of $K$. 
\begin{lemma} \label{lem:jiggleSubdivIncl}
	Consider a finite simplicial complex $K$ in $\R^m$ and the inclusion $\iota: |K| \hookrightarrow \R^m$. Then, given
	\begin{itemize}
		\item $\gamma>0$,
		\item distributions $\xi_\Delta$ defined on $\Op(\Delta) $ such that $\Delta$ is stratified transverse to $\xi_\Delta$ for all $\Delta \in K^{(\topd)}$, and
		\item a subdivision $K'$ of $K$, 
	\end{itemize}
	there exists a $\gamma$-jiggling $(\iota',K'')$ of $(\iota,K)$ such that
	\begin{enumerate}[label=(\arabic*)]
		\item\label{it:subdivPL} $(\iota',K'')$ is a piecewise linear embedding,
		\item\label{it:subdivC0} $\dist0 (\iota',\id) < \gamma/2^\ell$, 
		\item\label{it:subdivGenPos} $(\iota'|_\Delta,K'')$ is in general position with respect to $\xi_\Delta$ for each $\Delta \in K^{(\topd)}$, and
		\item\label{it:subdivSkeleton} $\iota'$ maps the $j$-skeleton of $K$ to itself, for each $j  \leq\dim(K)$.
	\end{enumerate}
\end{lemma}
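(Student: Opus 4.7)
The plan is to take $K''$ to be the $\ell$-th crystalline subdivision of the barycentric subdivision of $K'$, for some sufficiently large $\ell$. The preliminary barycentric step is essential: it ensures that for every simplex $\sigma$ of $K''$, at least one vertex of $\sigma$ lies in a face of $K$ of dimension at least $\dim\sigma$, a property that will be needed for the induction. The subsequent crystalline step then provides, via \cref{lem:rmaxminBound} and \cref{cor:crysboundlink}, uniform bounds on vertex links and on $\rmin$, $\rmax$, $\maxcoeff$ of simplices of $K''$ (after rescaling the metric by $2^\ell$), as in the proof of \cref{th:TjigglingEucl}.

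I would then inductively perturb the vertices of $K''$. Each vertex $v_i$ of $K''$ lies in the interior of a unique face $F_i$ of $K$ of some dimension $D_i$; I will perturb $v_i$ only within $F_i$ (in particular, vertices of $K$ are not moved). This immediately yields the skeleton-preservation property \ref{it:subdivSkeleton}, since each face of $K$ is then mapped into itself by the resulting piecewise linear map $\iota'$. For the perturbation itself, I apply \cref{pro:transPolyInducFace} with $H = \aspan(F_i)$, with constant foliations $V_u = \xi_{\Delta_u,v_i}$ indexed by the top-dimensional $\Delta_u \in K^{(\topd)}$ containing $F_i$, and with $\SD_u$ the set of already-perturbed simplices in $\str(v_i)$ lying in $\Delta_u$. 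The output is $v'_i \in B(v_i,\epsilon/2^\ell) \cap F_i$ together with a uniform $\delta>0$ such that each simplex in $\langle v'_i,\SD_u\rangle$ of dimension at most $D_i$ (or of any dimension, if $D_i > n-k$) is $\delta$-semitransverse to $\folconst{V_u}$ at $v'_i$.

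The main obstacle is that \cref{pro:transPolyInducFace} controls only simplices of dimension up to $D_i$, whereas a simplex $\sigma \in K''$ containing $v_i$ could a priori have larger dimension. To overcome this, I order the vertices of $K''$ by the dimension of their containing face of $K$, so that the ``last'' vertex of each simplex $\sigma$ has containing face of maximal dimension among the vertices of $\sigma$. The key claim, relying crucially on the initial barycentric subdivision, is that the last vertex of each $d$-simplex $\sigma \in K''$ has containing face of dimension at least $d$, so the induction does suffice. To prove this, let $\tau$ be the unique simplex of the barycentric subdivision of $K'$ whose interior contains $\sigma^\circ$; then $\tau$ corresponds to a flag $F_0 \subsetneq \cdots \subsetneq F_{d_\tau}$ in $K'$ with $d_\tau \geq d$. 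If every vertex of $\sigma$ had zero barycentric coordinate with respect to the barycenter $b_{F_{d_\tau}}$ inside $\tau$, then $\sigma$ would lie entirely in the face of $\tau$ opposite $b_{F_{d_\tau}}$, contradicting $\sigma^\circ \subset \tau^\circ$. Hence some vertex $v$ of $\sigma$ has positive coordinate, placing it in a subsimplex of $\tau$ that still contains $b_{F_{d_\tau}}$; the containing face of $v$ in $K$ is then the containing face of $F_{d_\tau}$ in $K$, whose dimension is at least $d_\tau \geq d$.

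To conclude, after the induction every simplex of $K''$ is $\delta$-semitransverse at its last vertex to some constant foliation $\folconst{V_u}$. Choosing $\ell$ large enough that \cref{cor:semitransAlmostConstant} applies (so each $\xi_{\Delta_u}$ is almost constant over simplices of $K''$), an invocation of \cref{cor:genPosAlmostConstant} yields the general position property \ref{it:subdivGenPos}. The $C^0$- and $C^1$-bounds, giving \ref{it:subdivC0} and the fact that $(\iota',K'')$ is a $\gamma$-jiggling, follow from \cref{lem:perturbVert} combined with the crystalline scaling of $\maxcoeff$, exactly as in the proof of \cref{th:TjigglingEucl}. That $\iota'$ is a piecewise linear embedding \ref{it:subdivPL} follows as in \cref{cor:perturbLin} for $\epsilon$ small and $\ell$ large.
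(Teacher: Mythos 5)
Your proof is correct and follows the same strategy as the paper's: barycentrically subdivide $K'$, then apply crystalline subdivision, order the vertices of $K''$ by the dimension of their minimal containing face of $K$, and replace \cref{cor:transPoly} by \cref{pro:transPolyInducFace} in the inductive step of \cref{th:TjigglingEucl} so that each perturbation stays inside the relevant face of $K$. The one genuine addition is that you spell out, via the flag/barycentric-coordinate argument, exactly why the barycentric step forces every $d$-simplex of $K''$ to have a vertex lying in a face of $K$ of dimension at least $d$ — the paper only asserts that the barycentric subdivision and the vertex ordering make the induction go through, so your elaboration fills in a detail left implicit there.
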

\begin{proof} We prove the claim by slightly changing the proof of \cref{th:TjigglingEucl}. We start by barycentrically subdividing each of the simplices $K'$ once, including the non-top-dimensional ones. We denote the resulting subdivison by $\tilde K$.
	
	As in the proof of \cref{th:TjigglingEucl}, we denote the image of the vertices of $K'' \coloneqq \tilde K_\ell$ by $p_0,\dots, p_{N(\ell)}$ for some $N(\ell) \in N$. However, we now order the vertices such that for all $j \in [\dim(K)]$ there exists $S(j) \in \N$ such that the set $\{p_i \mid i \leq S(j)\}$ is the set of vertices lying in the $j$-skeleton of $K$. In the proof of \cref{th:TjigglingEucl} we then inductively perturb each $p_i$ to a $p'_i \in B(p_i,\epsilon)$ such that every simplex in the polyhedron spanned by the vertices up to $p_i$ is transverse to $\folconst{\xi_{p_i}}$. 
	
	We now also perturb the $p_i$ to $p'_i$, but with two extra requirements. Firstly, we want that every simplex in the polyhedron spanned by the vertices up to $p_i$ is transverse to the foliations $\folconst{(\xi_\Delta)_{p_i}}$ for all $\Delta \in K^{(\topd)}$ containing $p_i$. Secondly, we want that if $F$ is the smallest dimensional face of $K$ containing $p_i$, that $p'_i$ is also contained in $F$. By assumption we know that such a face $F$ is transverse to $\folconst{(\xi_\Delta)_{p_i}}$ for all $\Delta \in K^{(\topd)}$ containing $p_i$.
	
	Hence both requirements can be achieved if we use \cref{pro:transPolyInducFace} instead of \cref{cor:transPoly} in the proof of \cref{th:TjigglingEucl}. This is then also the point where the barycentric subdivision and the ordering of the vertices is important: if $p_i$ is contained in a face $F$ of the $j$-skeleton of $K$ (where $j$ is minimal), we need to choose $p'_i$ such that simplices of at most dimension $j$ are made transverse and hence we can choose $p'_i$ to lie in the face $F$ as it is of dimension $j$.
\end{proof}

The jiggling of a subdivision now follows almost directly.

\begin{corollary} \label{cor:TjiggleSubdivPolyRelativeNoSeq}
	Consider a finite simplicial complex $K$ in $\R^m$ and a manifold $N$ endowed with a distribution $\xi$. Then, given
	\begin{itemize}
		\item $\gamma>0$,
		\item a piecewise embedding $f:|K| \to N$ stratified transverse to $\xi$ with respect to $K$,
		\item a subdivision $K'$ of $K$, 
	\end{itemize}
	there exists $\epsilon>0$ and a $\gamma$-jiggling $(T,K'')$ of $(\id,K')$ such that 
	\begin{itemize}
		\item $T$ is a triangulation of $K$, 
		\item $T$ is a piecewise linear embedding,
		\item $\dist0 (T,\id) < \gamma/2^\ell$, and
		\item $(f,T)$ is in general position with respect to $\xi$.
	\end{itemize}
\end{corollary}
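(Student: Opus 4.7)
The plan is to reduce the statement to Lemma \ref{lem:jiggleSubdivIncl} by pulling back $\xi$ through $f$ separately on each top simplex of $K$. The role of that lemma is precisely to jiggle the identity $|K|\to|K|$ in a way that preserves the simplicial skeleta of $K$, and this skeleton preservation is what will guarantee that the jiggled map is still a triangulation of $K$.

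First, for each top simplex $\Delta \in K^{(\topd)}$, the restriction $f|_\Delta$ is a smooth embedding into $N$. I pull $\xi$ back along $f|_\Delta$ and extend smoothly to an open neighborhood of $\Delta$ in $\R^m$, producing a distribution $\xi_\Delta$ defined on $\Op(\Delta)$. Stratified transversality of $f$ with respect to $\xi$ translates, via the embedding $f|_\Delta$, directly into stratified transversality of $\Delta$ with respect to $\xi_\Delta$, which is precisely the hypothesis required by Lemma \ref{lem:jiggleSubdivIncl}.

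Next I apply Lemma \ref{lem:jiggleSubdivIncl} to the inclusion $\iota:|K|\hookrightarrow\R^m$ with tolerance $\gamma$, the collection $\{\xi_\Delta\}_{\Delta\in K^{(\topd)}}$, and the given subdivision $K'$. This produces a $\gamma$-jiggling $(\iota',K'')$ of $(\iota,K)$ for which, by inspection of the lemma's proof, $K''$ is a crystalline subdivision of the barycentric subdivision of $K'$ and so in particular refines $K'$. I then set $T := \iota'$. Properties (1)--(3) of the lemma immediately yield that $T$ is a piecewise linear embedding with $\dist{0}(T,\id)<\gamma/2^\ell$. Property (4)\,---\,that $\iota'$ maps each $j$-skeleton of $K$ to itself\,---\,implies that $T$ sends each simplex of $K$ onto itself, so $T$ triangulates $K$.

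The last thing to verify is that $(f,T)$ is in general position with respect to $\xi$. For each top simplex $\Delta''\in K''$, the skeleton-preserving property together with pure-dimensionality places $\Delta''$ inside some $\Delta\in K^{(\topd)}$, and by the lemma $T(\Delta'')\subset\Delta$ is in general position with respect to $\xi_\Delta=(f|_\Delta)^*\xi$. Because $f|_\Delta$ is a smooth embedding, pushforward by $df|_\Delta$ converts transversality in $\R^m$ of tangent planes of subsimplices of $T(\Delta'')$ to constant-foliation approximations of $\xi_\Delta$ into the analogous transversality in $N$ of tangent planes of subsimplices of $f(T(\Delta''))$ to constant-foliation approximations of $\xi$. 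Assembling over all $\Delta''\in K''^{(\topd)}$ gives general position of $(f,T)$ with respect to $\xi$. The constant $\epsilon>0$ in the statement plays no role in the listed conclusions and can be harvested, if desired, from the quantitative $\epsilon$-transversality inherent in the proof of Lemma \ref{lem:jiggleSubdivIncl} (compare Corollary \ref{cor:towerJiggling}).

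The main obstacle is the bookkeeping in the final step: checking that simplex-by-simplex general position with respect to the pullbacks $\xi_\Delta$ assembles into global general position of $f\circ T$ with respect to $\xi$. This rests on naturality of the constant-foliation approximation under smooth embeddings together with the preservation of the transversality of linear subspaces under the injective linear map $df|_\Delta$, both of which are straightforward but require a careful identification of tangent spaces and rank conventions.
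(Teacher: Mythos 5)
Your proof follows the paper's own argument exactly in its main steps: pull $\xi$ back through a smooth extension $f_\Delta$ of $f|_\Delta$ to get distributions $\xi_\Delta$ on neighborhoods of each top simplex, apply Lemma~\ref{lem:jiggleSubdivIncl}, and use the skeleton-preserving property~\ref{it:subdivSkeleton} to conclude that the jiggled inclusion $\iota'$ factors through $|K|$ and is a triangulation $T$. Your observation that the $\epsilon$ in the statement is unused is also legitimate (it only becomes relevant in the sequence version, Corollary~\ref{cor:TjiggleSubdivPolyRelative}).

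However, your final verification of general position takes an unnecessary and in fact subtly flawed detour. You claim that ``pushforward by $df|_\Delta$ converts transversality in $\R^m$ of tangent planes of subsimplices of $T(\Delta'')$ to constant-foliation approximations of $\xi_\Delta$ into the analogous transversality in $N$.'' This is not quite right: the general position condition (Definition~\ref{def:ThGenPos}) compares the tangent plane of a subsimplex at a point $y$ with the constant-foliation approximation $\folconst{\xi_x}$ at a possibly \emph{different} point $x$, and since $(df_\Delta)_x\ne(df_\Delta)_y$ for a nonlinear embedding $f_\Delta$, transversality of $\Gr(\Delta')$ with $(df_\Delta)_y^{-1}(\xi_{f(y)})$ does not transport to transversality of $(df_\Delta)_x(\Gr(\Delta'))$ with $\xi_{f(y)}$. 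Fortunately the detour is unneeded: the paper deliberately defines general position of a pair $(f,T)$ \emph{intrinsically in the domain}, requiring each top simplex $\Delta''\in K''$ to be in general position with respect to the pullback distribution $(f\circ T)^*\xi$. Since near $\Delta''$ one has $(f\circ T)^*\xi=T^*f_\Delta^*\xi=(\iota'|_\Delta)^*\xi_\Delta$, the conclusion of Lemma~\ref{lem:jiggleSubdivIncl} gives precisely this, with no pushforward to $N$ of constant-foliation approximations required. The transversality half of the definition does transport through $f_\Delta$ pointwise, which is all that is needed there.
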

\begin{proof}
	For each simplex $\Delta\in K^{(\topd)}$ we extend $f|_\Delta$ to an embedding $f_\Delta$ of an open neighborhood $\Op(|\Delta|)\subset \R^m$ of $\Delta$ into $N$ and consider the preimage $f_\Delta^{-1}(\xi)$ of $\xi$ under $f$. This defines a distribution $\xi_\Delta$ on $\Op(|\Delta|)$ for each $\Delta\in K^{(\topd)}$. Using \cref{lem:jiggleSubdivIncl}, we now jiggle the inclusion of $\iota: |K| \to \R^m$ with respect to the distributions $\xi_\Delta$. We denote the resulting jiggling by $(\iota',K'')$. \Cref{it:subdivSkeleton} of \cref{lem:jiggleSubdivIncl} tells us that $\iota'$ preserves the skeleta of $K$. Hence the map $\iota'$ factors as $\iota \circ T$, where $T$ is the requested triangulation of $K$.
\end{proof}

\subsubsection{Jiggling sequences of subdivisions}
We recall that we introduced the concept of jiggling a subdivision with the goal of jiggling a map $f: |K| \to N$ relative to a subcomplex $A$ of $K$, over which $f$ is already stratified transverse. However, when jiggling $(f,K')$ where $K'$ is a subdivision of $K$, we do a priori not know how often we have to subdivide $K'$. Hence, given a subdivision $K'$ of $K$, it does not suffice to obtain a single jiggling of $K'$ with respect to which $f$ is in general position, but we need a sequence of jigglings.

By combining \cref{cor:towerJiggling} and \cref{cor:TjiggleSubdivPolyRelativeNoSeq}, we obtain the following.
\begin{corollary} \label{cor:TjiggleSubdivPolyRelative}
	Consider a finite simplicial complex $K$ in $\R^m$ and a manifold $N$ endowed with a distribution $\xi$. Then, given
	\begin{itemize}
		\item $\gamma>0$,
		\item a piecewise embedding $f:|K| \to N$ stratified transverse to $\xi$ with respect to $K$,
		\item a subdivision $K'$ of $K$, 
	\end{itemize}
	there exists $\epsilon>0$ and a sequence of $\gamma$-jigglings $(T_\ell,K''_\ell)_{\ell\geq L}$ of $(\id,K')$ such that for all $\ell \geq L$ we have that 
	\begin{itemize}
		\item $T_\ell$ is a triangulation of $K$, 
		\item $T_\ell$ is a piecewise linear embedding,
		\item $\dist0 (T_\ell,\id) < \gamma/2^\ell$, 
		\item $(f,T_\ell)$ is in general position with respect to $\xi$, and
		\item each simplex of $(f,T_\ell)$ is $\epsilon$-transverse.
	\end{itemize}
\end{corollary}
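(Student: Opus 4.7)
The plan is to essentially merge the arguments behind \cref{cor:TjiggleSubdivPolyRelativeNoSeq} and \cref{cor:towerJiggling}. The former already explains how to obtain a single $\gamma$-jiggling $(T,K'')$ of $(\id,K')$ with the listed properties, by jiggling the inclusion $\iota : |K| \hookrightarrow \R^m$ with respect to the pulled-back distributions $\xi_\Delta = f_\Delta^{-1}(\xi)$ defined on a neighborhood of each $\Delta \in K^{(\topd)}$, and exploiting the skeleton-preserving property of \cref{lem:jiggleSubdivIncl}\ref{it:subdivSkeleton} to factor $\iota'$ through $|K|$. What is new here is the requirement that this be done for arbitrarily fine $\ell$ in a controlled way that yields uniform $\epsilon$-transversality.

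First, I would promote \cref{lem:jiggleSubdivIncl} to a sequence-producing statement: inspecting its proof, one sees that it is just a variation of the proof of \cref{th:TjigglingEucl} in which \cref{pro:transPolyInducFace} replaces \cref{cor:transPoly}. Exactly as in the passage from \cref{th:TjigglingEucl} to \cref{cor:towerJiggling}, for every sufficiently large $\ell$ the same inductive perturbation yields a piecewise linear embedding $(\iota'_\ell,K''_\ell)$ with $K''_\ell = \tilde K_\ell$, the $\ell$-th crystalline subdivision of the barycentric subdivision $\tilde K$ of $K'$. The resulting map $\iota'_\ell$ satisfies \ref{it:subdivPL}--\ref{it:subdivSkeleton} and, by construction, each simplex of $(\iota'_\ell,K''_\ell)$ is $\delta/2^\ell$-semitransverse (in the rescaled metric) to $\folconst{(\xi_\Delta)_x}$ for some $x$, with the single constant $\delta>0$ produced by \cref{pro:transPolyInducFace} and independent of $\ell$.

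Next, using \cref{it:subdivSkeleton} as in the proof of \cref{cor:TjiggleSubdivPolyRelativeNoSeq}, each $\iota'_\ell$ factors as $\iota \circ T_\ell$ with $T_\ell : |K| \to |K|$ a triangulation of $K$ that is piecewise linear with respect to $K''_\ell$; the $C^0$- and $C^1$-bounds carry over directly. The general position of $(f,T_\ell)$ with respect to $\xi$ follows from the general position of $\iota'_\ell|_\Delta$ with respect to $\xi_\Delta$ on each top-dimensional simplex $\Delta$ of $K$ together with the definition of $\xi_\Delta$.

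The last step is the uniform $\epsilon$-transversality, and this is the only point that requires care. I would apply \cref{cor:inducSemiTransToTrans} to each simplex of $(f,T_\ell)$ with the scale $L = 2^\ell$: \cref{lem:rmaxminBound}, applied to $\tilde K$ (which serves as the ordered finite simplicial complex being crystallinely subdivided), provides constants $\rmin, \rmax, \maxcoeff > 0$ uniform in $\ell$ so that all simplices of $\tilde K_\ell$ have $\rmin(\Delta) \geq \rmin/2^\ell$, $\rmax(\Delta) \leq \rmax/2^\ell$ and $\maxcoeff(\Delta) \leq \maxcoeff \cdot 2^\ell$. Combined with the $\delta/2^\ell$-semitransversality from the previous paragraph, \cref{cor:inducSemiTransToTrans} then produces the desired $\epsilon>0$, independent of $\ell$, such that each simplex of $(f,T_\ell)$ is $\epsilon$-transverse. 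The main obstacle is bookkeeping: making sure the same constants $\rmin, \rmax, \maxcoeff$ that control $\tilde K_\ell$ also control the image simplices under $f \circ T_\ell$ up to the bounded factor $\|f\|_{C^1}$, exactly as in the ``obtaining bounds'' step of the proof of \cref{th:TjigglingEucl}.
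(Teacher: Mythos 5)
The paper gives no real proof here—it simply says the corollary follows by combining \cref{cor:towerJiggling} and \cref{cor:TjiggleSubdivPolyRelativeNoSeq}—and your proposal is precisely the detailed expansion of that remark: you re-run the proof of \cref{lem:jiggleSubdivIncl} for each $\ell$ (just as \cref{cor:towerJiggling} re-runs \cref{th:TjigglingEucl}), note that the $\delta$ produced by \cref{pro:transPolyInducFace} is independent of $\ell$, factor $\iota'_\ell = \iota\circ T_\ell$ via the skeleton-preservation, and then combine \cref{lem:rmaxminBound} with \cref{cor:inducSemiTransToTrans} at scale $L=2^\ell$ to extract a uniform $\epsilon$. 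This is the intended argument.

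One small imprecision worth tightening: \cref{cor:inducSemiTransToTrans} applies to \emph{linear} simplices in $\R^m$, so you should apply it to the simplices of $(\iota'_\ell,K''_\ell)$ with respect to the constant foliations $\folconst{(\xi_\Delta)_x}$, obtaining uniform $\epsilon$-transversality in $\R^m$; the $\epsilon$-transversality of the simplices of $(f,T_\ell)$ to $\xi$ then follows by pushing forward via the (finitely many) embeddings $f_\Delta$, whose $C^1$-norms distort the Grassmannian metric by a bounded factor, and by invoking \cref{lem:folAlmostConst} to pass from the single constant foliation $\folconst{(\xi_\Delta)_x}$ to the variable $\xi_\Delta$. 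You flag this as ``bookkeeping'' and that is the right instinct; it is exactly the same transfer already made in the ``obtaining bounds'' step of \cref{th:TjigglingEucl}.
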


\subsection{Relative jiggling in the linear setting} \label{sec:TjigglingRel}
The sequences of jigglings of \cref{cor:towerJiggling} and the jiggling of a subdivision as in \cref{cor:TjiggleSubdivPolyRelative} enable us to prove a relative version of jiggling. That is, given a piecewise embedding $f: |K| \to \R^n$, we first jiggle the simplicial complex to produce a sequence of subdivisions with respect to which $f$ is in general position when restricted to where it was already stratified transverse. Next, we jiggle $f$ along this sequence of subdivisions to make it in general position where it was not already.
\begin{proposition} \label{lem:TjigglingEuclRel}
	Consider a finite simplicial complex $K$ and a distribution $\xi$ on $\R^n$. Then, given
	\begin{itemize}
		\item $\gamma>0$,
		\item a piecewise embedding $f:|K| \to \R^n$,
		\item a subcomplex $A \subset K$ such that $(f,K)$ is stratified transverse to $\xi$ when restricted to $|\str(A,K)|$, and
		\item a subcomplex $B \subset K$ and an open neighborhood $V \subset |K|$ of $|B|$,
	\end{itemize}
	there exists a $\gamma$-jiggling $(g,K')$ of $(f,K)$ such that
	\begin{itemize}
		\item $g$ is a piecewise linear embedding on $|K|\setminus (|\str(A,K)|\cup V)$,
		\item $(g,K')$ is in general position with respect to $\xi$ on $(|K|\setminus V)\cup |\str(A,K')|$, and
		\item $g|_{|A \cup B|} = f|_{|A \cup B|}$.
	\end{itemize} 
\end{proposition}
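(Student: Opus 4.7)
The plan is to use the two-phase strategy outlined in \cref{sec:IntroThJiggling}: first jiggle a subdivision of $K$ to establish general position of $f$ near $|A|$, then jiggle the map to extend general position to $|K|\setminus V$, all while keeping $f$ unchanged on $|A\cup B|$.

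\emph{Phase 1 (jiggling a subdivision).} Using a relative variant of \cref{cor:TjiggleSubdivPolyRelative}, we produce a sequence of $(\gamma/2)$-jigglings $(T_\ell, K''_\ell)_{\ell \geq L}$ of $(\id, K)$ such that $T_\ell : |K| \to |K|$ is a piecewise linear triangulation with $T_\ell|_{|A\cup B|} = \id$, and such that each top-dimensional simplex of $(f\circ T_\ell, K''_\ell)$ contained in $|\cl(\str(A,K))|$ is in general position and $\epsilon$-transverse to $\xi$ for some uniform $\epsilon > 0$. The relative version is obtained by modifying the inductive perturbation in the proof of \cref{lem:jiggleSubdivIncl}: order the vertices of $K''_\ell$ so that those lying in $|A\cup B|$ come first and set $p'_i = p_i$ for these. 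The induction base case is valid because $(f,K)$ is stratified transverse on $|\str(A,K)|$ by assumption, so the simplices with all vertices in $A$ are already semitransverse. To extend $T_\ell$ by the identity outside a neighborhood of $|\str(A,K)|$, we first pass to a barycentric subdivision of $K$ so that $\str(A,K)$ is a nice subcomplex in the sense of \cref{lem:niceBarySubdiv}.

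\emph{Phase 2 (jiggling the map).} We next jiggle $f\circ T_\ell$ along the sequence $(K''_\ell)_{\ell \geq L}$ in the sense of \cref{rem:TjigglingAlongSubdivSeq}: linearize $f\circ T_\ell$ with respect to $K''_\ell$ and inductively perturb vertices as in the proof of \cref{th:TjigglingEucl}, skipping those in $|A \cup B|$. For $\ell$ large enough, any simplex of $K''_\ell$ meeting $|B|$ lies inside $V$, since $V$ is an open neighborhood of the compact set $|B|$ while simplex diameters decrease like $2^{-\ell}$; such simplices are therefore allowed to remain non-general-position. The remaining vertices are perturbed by at most $\epsilon'/2^\ell$ with $\epsilon' \ll \epsilon$, yielding a piecewise linear map $g_\ell$ that is a $(\gamma/2)$-perturbation of $f\circ T_\ell$, coincides with $f$ on $|A\cup B|$, and achieves uniform $\delta$-semitransversality on each simplex outside $V$. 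Applying \cref{cor:genPosAlmostConstant} then promotes this to general position on $|K|\setminus V$.

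\emph{Main obstacle.} The principal difficulty is ensuring that Phase 2 does not destroy the general position obtained in Phase 1 on $|\str(A, K''_\ell)|$, since vertices of $K''_\ell$ in $|\str(A,K''_\ell)|\setminus |A|$ do get perturbed. This is controlled by the uniform $\epsilon$-transversality from Phase 1 together with \cref{lem:PerturbVertToNbhGr}: a Phase-2 perturbation of the vertices by $\epsilon'/2^\ell$ moves the Grassmannian image of each simplex by $O(\epsilon'\cdot 2^{-\ell}\cdot \maxcoeff(\Delta)) = O(\epsilon')$, since $\maxcoeff(\Delta) = O(2^\ell)$ by \cref{lem:rmaxminBound}. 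Choosing $\epsilon' < \epsilon/2$ therefore preserves $(\epsilon/2)$-transversality on $|\str(A, K''_\ell)|$, and combining with the general position established on $|K|\setminus V$ by Phase 2, the resulting pair $(g_\ell, K''_\ell)$ is the desired $\gamma$-jiggling of $(f,K)$.
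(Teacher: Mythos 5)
Your two-phase strategy (jiggle the subdivision near $A$, then jiggle the map) matches the paper's overall architecture, but there are two genuine gaps.

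\textbf{Phase 1 over-constrains the triangulation.} You require $T_\ell|_{|A\cup B|} = \id$, i.e.\ you fix \emph{all} vertices of $K''_\ell$ lying in $|A\cup B|$, including the new vertices introduced by subdivision. But the whole reason for jiggling the subdivision is that stratified transversality is \emph{not} preserved under subdivision (\cref{fig:transvNotSubdiv}): the subdivision $K''_\ell|_{|A|}$ of $A$ is a fixed (barycentric + crystalline) subdivision and will generally contain simplices over which $f$ is not transverse. With $T_\ell = \id$ on $|A|$, the pair $(f\circ T_\ell, K''_\ell)$ restricted to $|A|$ is just $(f, K''_\ell|_{|A|})$, and your claim of general position there does not follow. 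What \cref{lem:jiggleSubdivIncl} actually provides (item \ref{it:subdivSkeleton}) is that $T_\ell$ maps each $j$-skeleton of $K$ to itself — i.e.\ a vertex of $K''_\ell$ lying in the interior of a face $F$ of $K$ is moved \emph{within} $F$, not held fixed. This weaker constraint is essential; strengthening it to pointwise identity on $|A|$ removes the freedom needed to establish general position there. (Only the original vertices of $K$ are genuinely fixed, since their minimal face is a point.) Relatedly, your justification of the base case — that stratified transversality of $(f,K)$ on $|\str(A,K)|$ makes the simplices of the \emph{subdivision} ``already semitransverse'' — conflates the qualitative and quantitative notions: no uniform-in-$\ell$ semitransversality is available before the subdivision is jiggled.

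\textbf{Phase 2 cannot produce $g = f$ on $|A\cup B|$.} You construct $g_\ell$ by linearizing $f\circ T_\ell$ with respect to $K''_\ell$ and perturbing vertices, skipping those in $|A\cup B|$. The result is piecewise linear, so on $|A\cup B|$ it agrees with $f$ only at the (fixed) vertices, not on the whole set — unless $f$ is itself piecewise linear there, which is not assumed. The proposition genuinely requires $g|_{|A\cup B|} = f|_{|A\cup B|}$ as maps. The paper resolves this by making $g_\ell$ literally equal $f$ (piecewise smooth, not linearized) on $|A|\cup|B|$, equal to the global jiggling $h_\ell$ on $|T_\ell\setminus\str(A\cup B,T_\ell)|$, and a controlled interpolation (\cref{def:interpol}, \cref{lem:interpol}) on $|\ring(A\cup B,T_\ell)|$. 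The interpolation bounds, combined with the uniform $\epsilon_f$-transversality from Phase~1 and \cref{lem:folAlmostConst}, are what make the ring simplices land in general position; this is where the barycentric subdivision (so that $A$ and $B$ are nice, \cref{lem:niceBarySubdiv}) is actually used — to make the ring simplices joins of a face in $A\cup B$ with a face away from it. Your ``main obstacle'' paragraph has the right flavor — small vertex perturbations move Grassmannian images by $O(\epsilon')$ via \cref{lem:PerturbVertToNbhGr,lem:rmaxminBound} — but it is aimed at the wrong target: the real problem is not that perturbation might erode transversality near $A$, but that pure vertex-perturbation cannot reproduce the non-linear $f$ on $|A\cup B|$ at all. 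The interpolation step is the missing ingredient.
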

\begin{proof}
	Using \cref{lem:niceBarySubdiv}, we see that by barycentrically subdividing $K$ once, the subcomplexes $A$ and $B$ are nice so that we can interpolate over their ring as in \cref{def:interpol}. We denote the resulting subdivision by $K^\bary$. We let $L_1 \in \N$ be such that $|\str(B,K^\bary_{L_1})| \subset V$ and hence take the number of crystalline subdivisions $\ell$ to be at least $L_1$.
		
	Let $\rmin>0$ be such that $\min_{\Delta\in K^\bary_\ell} \rmin(\Delta) = \rmin \cdot 2^{-\ell}$ as in \cref{lem:rmaxminBound}. We jiggle the subdivision $K^\bary$ of $K$ using \cref{cor:TjiggleSubdivPolyRelative}, which produces $L_2 \in \N$ and a sequence $(T_\ell,K''_\ell)_{\ell\geq L_2}$ of triangulations such that $\dist0(T_\ell,\id) < (\rmin/4) \cdot 2^{-\ell}$. Moreover, when we restrict $(f,T_\ell)$ to $|\str(A,K)|$, the map $(f|_{|\str(A,K)|},T_\ell)$ is in general position for all $\ell\geq L_2$ and there exists $\epsilon_f>0$ such that every simplex of $(f|_{|\str(A,K)|},T_\ell)$ is $\epsilon_f$-transverse.
	
	We observe that $(T_\ell)_{\ell \geq L_2}$ is a sequence of subdivisions such that there exists constants $\rmax,\maxcoeff>0$ satisfying that for all $\Delta\in T_\ell$ we have
	\begin{align*}
		(\rmin/2)\cdot 2^{-\ell} \leq r_\mina(\Delta) &\leq r_\maxa(\Delta) \leq (2\rmax)\cdot 2^{-\ell} \text{, and}\\
		\maxcoeff(\Delta) &\leq \maxcoeff \cdot 2^\ell.
	\end{align*}
	Moreover, we bound the maximum size of a vertex link in $T_\ell$ by the maximum size of a vertex link in $v \in K_\ell$. 
	Hence using \cref{rem:TjigglingAlongSubdivSeq}, we jiggle $(f,T_\ell)$ along $(T_\ell)_{\ell}$, and obtain a piecewise linear embedding $(h_\ell,T_\ell)$ such that $(h_\ell,T_\ell)$ is in general position and satisfies that
	\[\dist0(h_\ell,f) < \gamma' /2^\ell \quad \text{and} \quad \dist1(h_\ell,f) < \gamma', \]
	where $\gamma'$, and hence $\ell$, are yet to be determined.
	
	We now define the piecewise linear embeddings $(g_\ell : |K|\to\R^n,T_\ell)$, as
	\begin{itemize}
		\item $h_\ell$ on $|T_\ell \setminus \str(A\cup B,T_\ell)|$,
		\item $f$ on $|A|\cup |B|$, and
		\item the interpolation over simplices in $|\ring(A\cup B,T_\ell)|$.
	\end{itemize}

	It follows that $(g_\ell,T_\ell)$ is in general position over $|T_\ell \setminus \str(A\cup B,T_\ell)|$ and $|A|$ and hence it remains to check that $g_\ell$ is in general position over $\ring(A,T_\ell)$.
	
	Let $\Delta \in \ring(A,T_\ell)$ and let $\Delta_1 \in T_\ell\setminus\str(A,T_\ell)$ and $\Delta_2 \in A$ be two of its faces such that $\Delta = \join{\Delta_1}{\Delta_2}$. 
	We observe that $f = \interpolate{\Delta}{\Delta_1,\Delta_2}{f}{f}$ and $h_\ell|_\Delta = \interpolate{\Delta}{\Delta_1,\Delta_2}{g_\ell}{f}$.
	Hence we can use \cref{lem:interpol} to deduce that $\dist1 (h_\ell|_\Delta,f) = O(\gamma')$. We know that the image of $\Delta$ under $f$ is $\epsilon_f$-transverse. Hence by choosing $\gamma'$ small enough, we obtain that the image of $\Delta$ under $h_\ell$ is $\epsilon_f/2$-transverse. Moreover, if we take $\ell$ large enough, the distribution $\xi$ becomes almost constant as in \cref{lem:folAlmostConst}, implying that the maps $(g_\ell,T_\ell)$ are in general position. 
	
	We conclude the proof by defining $g$ as $g_\ell$, and $K'$ as $T_\ell$, for $\ell$ large enough.
\end{proof}

\subsection{Jiggling in a manifold} \label{sec:TjigglingMfd}
We now move to the case where $f$ is a piecewise embedding of a polyhedron into a manifold $N$. We work chart by chart, relative to previous charts, using \cref{lem:TjigglingEuclRel}. We only indicate the simplicial complex $K$ once in expressions where it occurs multiple times. For instance, we write $\ring(\strT[K]{Q})$ instead of $\ringT[K]{\strT[K]{Q}}$. 

\TjigglingMfd
\begin{figure}[h]
	\includegraphics[width=0.6\textwidth,page=8]{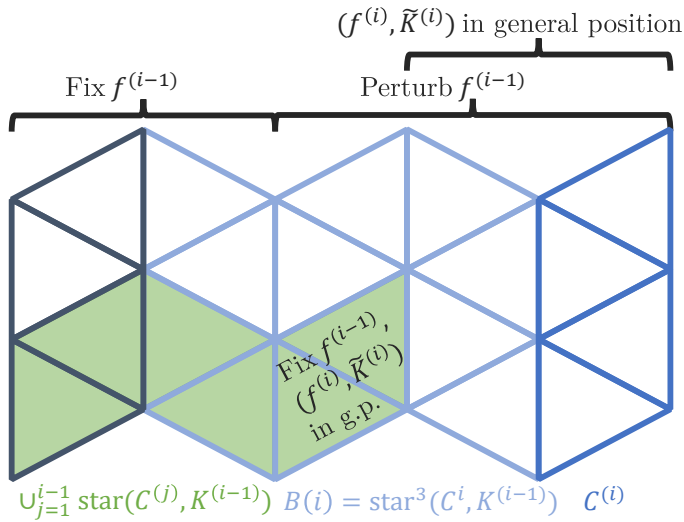}
	\centering
	\caption{A sketch of the proof of \cref{th:TjigglingMfd} when we have constructed $f^{(i-1)}$ and are about to construct $f^{(i)}$. Below we indicate the relevant subcomplexes of $K^{(i-1)}$. In particular, we have indicated with green where $(f^{(i-1)},K^{(i-1)})$ is in general position. At the top we indicate how to obtain $f^{(i)}$ from $f^{(i-1)}$, with as exception the green part where we fix $f^{(i-1)}$, even when the top says otherwise. We also indicate where $(f^{(i)},\tilde K^{(i)})$ is in general position (=g.p.). We point out that this in general does not include the green simplices, which is why we have to jiggle $\tilde K^{(i-1)}$.} \label{fig:TjiggleMfd}
\end{figure}
\begin{proof}
	Suppose first that $A$ is empty. Since the set $B(f(|K|),\gamma) \subset N$ is compact, it can be covered by finitely many charts $\{\phi_i : U_i \to \R^n \}_{i \in [I]}$, where the $U_i$ are opens in $N$ and $I \in \N$. We can assume that $\ell$ is large enough such that we can cover $K_\ell$ by finitely many subcomplexes $\{C^{(i)}\}_{i \in [I]}$ with $B(f(\strN{3}( C^{(i)},K)),\gamma) \subset U_i$.
	
	The argument now amounts to doing induction on $i$. Starting with $f = f^{(-1)}$ and $C^{(-1)} = \emptyset$, we produce a sequence of $\gamma$-jigglings $(f^{(i)},K^{(i)})$ of $(f,K)$ such that $(f^{(i)},K^{(i)})$ is in general position over $|\cup_{j=0}^{i} \str (C^{(j)},K^{(i)})|$. We do so via subsequent applications of \cref{lem:TjigglingEuclRel} as illustrated in \cref{fig:TjiggleMfd}. Concretely, in the $i$th step, we use \cref{lem:TjigglingEuclRel} to jiggle the map $\phi_i \circ f^{(i-1)} : |B(i)| \to \R^n$ which is piecewise smooth with respect to $K^{(i-1)}$ and where $B(i)$ denotes the subcomplex $\strN{3}(C^{(i)},K^{(i-1)})$ of $K^{(i-1)}$.
	 We jiggle $\phi_i \circ f^{(i-1)}$ such that the resulting section $(f^{(i)}:|B(i)|\to N, \tilde K^{(i)})$
	\begin{itemize}
		\item agrees with $f^{(i-1)}$ over $|\ring(\strN{2}(C^{(i)},K^{(i-1)}))|$ and over the intersection of $|B(i)|$ with $|\cup_{j=0}^{i-1} \str(C^{(j)},K^{(i-1)})|$, and
		\item is in general position when restricted to $|\str(C^{(i)},\tilde K^{(i)})|$, and the intersection of $|B(i)|$ with $|\cup_{j=0}^{i-1} \str(C^{(j)},K^{(i-1)})|$.
	\end{itemize} 
	We extend the map $f^{(i)}$ to $|K|$, by requiring that $f^{(i)}$ equals $f^{(i-1)}$ over $|K| \setminus |B(i)|$. We extend the resulting subdivision $\tilde K^{(i)}$ to the entirety of $K$ such that it is a subdivision of $K^{(i-1)}$.
	
	Since $\tilde K^{(i)}$ is a subdivision of $K^{(i-1)}$, the map $(f^{(i)},\tilde K^{(i)})$ is generally not in general position over $|\cup_{j=0}^{i-1} \str(C^{(j)},\tilde K^{(i)}) \setminus B(i) |$. The map $f^{(i)}$ does however equal $f^{(i-1)}$ over this 
	set, where $(f^{(i-1)},K^{(i-1)})$ is in general position. Hence using \cref{cor:TjiggleSubdivPolyRelative}, we jiggle the subdivision $\tilde K^{(i)}$ to a subdivision $K^{(i)}$ such that $(f^{(i)},K^{(i)})$ is in general position when restricted to $|\cup_{j=0}^{i} \str (C^{(j)},K^{(i)})|$. In finitely many steps the proof is complete.
	
	In the relative case where $A \neq 0$ we apply jiggling relative to $A$ throughout the proof by treating it as $K^{(-1)}$.
\end{proof}

\subsection{Triangulations in general position}\label{sec:TjigglingTriang}
We now apply \cref{th:TjigglingMfd} to the identity map $\id: M \to M$ interpreted as a map that is piecewise smooth with respect to a given triangulation $T: |K| \to M$. The resulting jiggled map is a piecewise embedding, and it is a homeomorphism if we take a small enough jiggling \cite[Lecture 4]{lurie2009topics}. However, if $M$ itself is not compact, we can only achieve general position over a given compact. This recovers Thurston's jiggling lemma \cite{Th2}.
\begin{corollary}
	Consider a manifold $M$ triangulated by $T: |K| \to M$ and endowed with a distribution $\xi$. Then given 
	\begin{itemize}
		\item $\gamma>0$, 
		\item a subcomplex $A \subset K$ such that $(T|_{|\str(A)|},\str(A))$ is stratified transverse to $\xi$, and
		\item a compact $C \subset M$,
	\end{itemize}
	there exists a $\gamma$-jiggling $T'$ of $T$ such that 
	\begin{itemize}
		\item $T'$ is in general position with respect to $\xi$ over $C \cup \str(A,T')$, and
		\item $T'|_{|A|} =  T|_{|A|}$.
	\end{itemize}
\end{corollary}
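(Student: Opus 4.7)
The plan is to deduce this corollary from \cref{th:TjigglingMfd} by localizing to a finite subcomplex near the compact set $C$ and by upgrading the stratified transversality hypothesis on $\str(A)$ to general position on a suitable subcomplex.

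First I would perform the localization. Since $T: |K| \to M$ is a homeomorphism onto its image and $C \subset M$ is compact, the preimage $T^{-1}(C)$ is compact in $|K|$. Local finiteness of $K$ then yields a finite subcomplex $K_0 \subset K$ whose (topological) interior contains $T^{-1}(C)$. Enlarging $K_0$ slightly, I would ensure that $K_0$ contains $\strN{3}(A \cap K_0, K)$, providing a safety collar between the "active" region (where we want general position) and the "fixed" region (where we leave $T$ unchanged). Outside $K_0$ we will set $T' = T$, and any jiggling will be confined to $|K_0|$.

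Next I would convert stratified transversality into general position. The hypothesis says that every simplex of $\str(A, K)$ is transverse to $\xi$; by compactness of $T(|\str(A) \cap K_0|)$, this transversality is uniform, so each such simplex is $\epsilon_0$-transverse for some fixed $\epsilon_0 > 0$. Applying crystalline subdivision to $K_0$ enough times, \cref{lem:rmaxminBound} makes every simplex arbitrarily small, and \cref{lem:folAlmostConst} ensures that $\xi$ becomes almost constant on each simplex. By \cref{cor:genPosAlmostConstant} (applied in charts of $M$), for $\ell$ large enough the subdivided copy $\tilde A$ of $A \cap K_0$ is in general position with respect to $\xi$, while $T|_{|\tilde A|}$ still equals $T|_{|A \cap K_0|}$ pointwise.

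Now I would invoke \cref{th:TjigglingMfd} on the piecewise embedding $T|_{|K_0^{\mathrm{sub}}|}: |K_0^{\mathrm{sub}}| \to M$, where $K_0^{\mathrm{sub}}$ denotes the subdivided $K_0$, with the subcomplex $\tilde A$ together with an outer collar $\partial K_0 := K_0^{\mathrm{sub}} \setminus \str(K_0'', K_0^{\mathrm{sub}})$ (for a slightly smaller $K_0'' \subset K_0$ still containing $T^{-1}(C)$). Over $\partial K_0$ the map $T$ is already trivially stratified transverse on a small enough subdivision, so one can apply the same subdivision trick to make it in general position, and include $\partial K_0$ in the relative subcomplex. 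The theorem then returns a $\gamma$-jiggling $(T'_0, K')$ of $(T|_{|K_0^{\mathrm{sub}}|}, K_0^{\mathrm{sub}})$ that is in general position with respect to $\xi$ and agrees with $T$ on $|\tilde A| \cup |\partial K_0|$. Since $T'_0 = T$ on $|\partial K_0|$, the map $T'_0$ extends by $T$ to a $\gamma$-jiggling of $(T, K)$ on all of $M$, still equal to $T$ on $|A|$ and in general position over $C \cup \str(A, T')$.

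The main obstacle is the middle step: ensuring that stratified transversality on $\str(A)$ can be upgraded to general position on the subdivided $A$ without altering $T|_{|A|}$. The subtlety is that general position involves comparing tangent directions of subsimplices with $\xi_x$ at all points $x$ of the ambient simplex, not just the subsimplex itself, so it genuinely strengthens stratified transversality. The extra buffer provided by working with the star $\str(A)$ (rather than $A$ alone) is precisely what makes the compactness/uniform $\epsilon$-transversality argument go through, since it guarantees that small crystalline subdivisions of $A$ stay inside a region where transversality is uniformly controlled.
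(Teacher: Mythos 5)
Your localization step (restricting to a finite subcomplex $K_0$ containing $T^{-1}(C)$) is necessary and correct, and the overall structure — localize, upgrade transversality, apply \cref{th:TjigglingMfd} — is the right shape. However, the middle step contains a genuine gap. You claim that after sufficiently many crystalline subdivisions the subdivided copy $\tilde A$ of $A \cap K_0$ is in general position, appealing to \cref{cor:genPosAlmostConstant}. This does not work: \cref{cor:genPosAlmostConstant} requires $\delta$-semitransversality of \emph{every subsimplex of the subdivided simplex}, and stratified transversality of the original (coarse) simplex of $A$ does not imply this. The paper explicitly warns about exactly this point: stratified transversality and general position are \emph{not} preserved under subdivision (see the discussion after \cref{def:transSimplComplex} and the example of \cref{fig:transvNotSubdiv}, where a transverse $2$-simplex acquires edges tangent to the foliation upon subdivision). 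Crucially, \cref{cor:genPosAlmostConstant} also needs simultaneous control over the distribution $\xi$ and a uniform $\delta$, neither of which you have for the new, small simplices; taking $\ell \to \infty$ shrinks the simplices but does nothing to produce semitransversality of subsimplices that happen to lie in a leaf. The paper develops the entire machinery of \textit{jiggling subdivisions} (\cref{sec:TjigglingSubdiv}, in particular \cref{cor:TjiggleSubdivPolyRelative}) precisely to circumvent this obstruction: rather than simply subdividing $A$, one perturbs the subdivision of $A$ itself (keeping the map $T$ fixed on $|A|$) so that the perturbed triangulation of $|A|$ is in general position. Your proposal bypasses this and asserts the result.

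A related weakness appears in your treatment of the outer collar $\partial K_0$: you write that ``$T$ is already trivially stratified transverse on a small enough subdivision'' over $\partial K_0$, but there is no reason for $T$ to be transverse to $\xi$ at all in that region, and the same subdivision issue would recur even if it were. In fact the corollary only demands general position over $C \cup \str(A,T')$, not over a neighborhood of $\partial K_0$, so the right move is to arrange the localization so that the region where $T'$ must agree with $T$ (outside $K_0$) is excluded from where general position is claimed, rather than trying to force stratified transversality on the collar. In short: the key idea you are missing is the paper's jiggling-of-subdivisions technique, which replaces naive subdivision when converting stratified transversality on $\str(A)$ into general position.
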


\printbibliography
\end{document}